\documentclass[12pt,a4paper]{amsart}
\usepackage[a4paper, left=28mm, right=28mm, top=28mm, bottom=34mm]{geometry}

\usepackage{latexsym}
\usepackage{amsmath}
\usepackage{amssymb}
\usepackage{amsthm}
\usepackage{amscd}
\usepackage{mathrsfs}
\usepackage{mathtools}
\usepackage[all]{xy}
\usepackage{graphicx}
\usepackage{comment}
\usepackage{arydshln}
\usepackage{stmaryrd}
\usepackage{url} 
\usepackage{textcase}
\usepackage[dvipsnames]{xcolor}
\usepackage[normalem]{ulem}

\newtheorem{theorem}{Theorem}[section]
\newtheorem{lemma}[theorem]{Lemma}
\newtheorem{corollary}[theorem]{Corollary}
\newtheorem{proposition}[theorem]{Proposition}

\theoremstyle{definition}

\newtheorem{remark}[theorem]{Remark}

\def\A{{\mathbb A}}
\def\F{{\mathbb F}}

\def\Z{{\mathbb Z}}

\def\Ql{\overline{\mathbb{Q}}_\ell}

\newcommand{\mc}{\mathcal}
\newcommand{\et}{\text{\'et}}

\theoremstyle{remark}

\makeatletter

\@addtoreset{equation}{section}
\makeatother

\makeatletter

\@addtoreset{equation}{section}
\makeatother

\makeatletter
\newcommand{\subsubsubsection}{\@startsection{paragraph}{4}{\z@}%
 {1.0\Cvs \@plus.5\Cdp \@minus.2\Cdp}%
 {.1\Cvs \@plus.3\Cdp}%
 {\reset@font\sffamily\normalsize}
 }
\makeatother
\DeclareMathOperator{\Spec}{Spec}

\DeclareMathOperator{\disc}{disc}

\DeclareMathOperator{\Hom}{Hom}

\DeclareMathOperator{\Tr}{Tr}
\DeclareMathOperator{\Nr}{Nr}

\usepackage{bm}

\begin{document}

\title[Factorizations of linearized polynomials and extremal Curves]{Factorizations of linearized polynomials and extremal curves in odd characteristic}

\author{Tetsushi Ito}
\address{
Department of Mathematics, Faculty of Science, Kyoto University
Kyoto, 606--8502, Japan}
\email{tetsushi@math.kyoto-u.ac.jp}

\author{Daichi Takeuchi}
\address{Department of Mathematics,
Institute of Science Tokyo,
2-12-1 Ookayama, Meguro-ku, Tokyo, 152-8551, Japan
}
\email{daichi.takeuchi4@gmail.com}

\author{Takahiro Tsushima}
\address{
Keio University School of Medicine,
4-1-1 Hiyoshi, Kohoku-ku,
Yokohama, 223-8521, Japan}
\email{tsushima@keio.jp}

\date{}

\subjclass[2020]{Primary: 14F20, 14G15; Secondary: 14H25, 14G10.}

\keywords{Artin--Schreier curves; extremal van der Geer--van der Vlugt curves; maximal curves}

\begin{abstract}
We give a complete recipe for constructing extremal van der Geer--van der Vlugt curves over finite fields of odd 
characteristic. The input data consist of a nonzero element of the base field together with a linear subspace satisfying a certain trace condition. This construction may be viewed as an odd-characteristic analogue of the one previously obtained by the authors in characteristic two. 
\end{abstract}
\maketitle
\section{Introduction}\label{Introduction}
Let $p_0$ be a prime number, $p$ a power of $p_0$, and $q$ a power of $p$. 
An $\F_p$-linearized polynomial is a polynomial of the form $f(x)=\sum_{i=0}^e b_i x^{p^i}$. 
A \emph{van der Geer--van der Vlugt curve} over $\F_q$ is a smooth compactification of the affine curve 
\[
C_R \colon y^p-y=xR(x), 
\]
where $R(x)\in\F_q[x]$ is a nonzero $\F_p$-linearized polynomial. 
Van der Geer--van der Vlugt curves have been studied from the viewpoints of number theory and coding theory: see \cite{AM, BHMSSV, CO0, GV, ITT, ITT0, ITT1, TT}.
This family of curves is related to the geometry of Lubin--Tate spaces, which play a central role in the local Langlands correspondences for general linear groups; see \cite{IT, We}. 

Recall that a smooth projective curve $C$ over $\F_q$ is 
\textit{$\F_q$-maximal} (resp.\ \textit{$\F_q$-minimal}) if 
\[
\# C(\F_q)=q+1+2 g(C) \sqrt{q}
\qquad \left(\textrm{resp.\ } \# C(\F_q)=q+1-2 g(C) \sqrt{q}\right), 
\]
where $g(C)$ denotes the genus of $C$. Following \cite{ITT1}, 
we say that $C$ is \textit{$\F_q$-extremal} if it is $\F_q$-maximal or $\F_q$-minimal. 

In this paper, we give a complete recipe for constructing $\F_q$-extremal van der Geer--van der Vlugt curves in odd characteristic.
Our construction may be viewed as an odd-characteristic analogue of the construction developed in \cite{ITT1} for the characteristic-two case.

Assume that $p_0$ is odd. A key ingredient in our approach is the notion of the adjoint of an
$\F_p$-linearized polynomial. 
For an $\F_p$-linearized polynomial 
$f(x)=\sum_{i=0}^e b_i x^{p^i} \in \F_q[x]$, 
its \emph{adjoint} is defined by  
\[
f^\ast(x):=\sum_{i=0}^e (b_i x)^{p^{-i}}
\in \F_q[x^{p^{-\infty}}], 
\]
where $\F_q[x^{p^{-\infty}}]$ denotes the perfection of 
$\F_q[x]$. 

Let $R(x)$ be a nonzero $\F_p$-linearized polynomial. The key observation is that a factorization of the form 
\begin{equation}\label{equation: R+Rast= intro}
    R+R^\ast=F^\ast\circ (aF), 
\end{equation}
where $a\in \F_q^\times$ and $F$ is an $\F_p$-linearized polynomial, yields an explicit formula for the $L$-polynomial of the curve $\overline{C}_R$. As a consequence, it becomes straightforward to determine whether
$\overline{C}_R$ is $\F_q$-maximal or $\F_q$-minimal.

\medskip

We now explain the content in more detail. The construction starts with a pair $\rho=(W,a)$, where $W\subset \F_q$ is an $\F_p$-linear subspace  and $a\in \F_q^\times$. 
Write 
\[\prod_{t \in W} (x-t)=\sum_{i=0}^e c_i x^{p^i}\] and 
define 
\[F_W(x):=\sum_{i=0}^e c_i^{p^{-i}} x^{p^{e-i}}. \]
There exists a unique $\F_p$-linearized polynomial 
$R(x)$ satisfying 
\eqref{equation: R+Rast= intro}. 
We define $D_{\rho}$ to be the curve $C_R$. 

Let $\overline{D}_\rho$ denote the smooth compactification of the curve $D_\rho$. Our first theorem gives an explicit formula for the
$L$-polynomial of $\overline{D}_\rho$. 

Let $\Tr_{q/p} \colon \F_q \to \F_p$ denote the trace map. 
Fix a prime number $\ell$ different from $p_0$. Set $\F_p^\vee:=\Hom_{\mathbb{Z}}(\F_p,\Ql^\times)$. 
For a character $\psi \in \F_p^{\vee}$, set  
 $\psi_q:=\psi \circ \Tr_{q/p}$. 
  For a power $r$ of $p_0$, let $\left(\frac{\cdot}{r}\right)$ denote the unique nontrivial character $\F_r^\times\to\{\pm1\}$.  
  Let $\F$ be an algebraic closure of $\F_q$. 
\begin{theorem}[{Theorem~\ref{lp}}]
Assume that $p_0$ is odd. Let $n\coloneqq[\F_q:\F_p]$. 
For each $\psi\in \F_p^\vee\setminus\{1\}$, set 
\[
G_{\psi}:=-\sum_{x \in \F_p} \psi(x^2). 
\]
Then the $L$-polynomial  
\[
L(\overline{D}_{\rho}/\F_q, T):=\det(1-\mathrm{Fr}_q  T \mid 
H^1(\overline{D}_{\rho,\F},\Ql)) 
\]
satisfies  
\[
L(\overline{D}_{\rho}/\F_q, T)=\prod_{\substack{\psi\in\F_p^\vee\setminus\{1\} \\ v\in W}} (1-\tau_{\psi,v} T), 
\]
where 
\[
\tau_{\psi,v}:=\psi_q(-2^{-1} a^{-1} v^2) \cdot \left(\frac{2a}{q}\right) \cdot 
G_{\psi}^n. 
\]
\end{theorem}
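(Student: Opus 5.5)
The plan is to compute the $L$-polynomial through the standard decomposition of $H^1$ of an Artin--Schreier curve into character pieces, and then to evaluate the resulting character sums using the factorization \eqref{equation: R+Rast= intro}.

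\textbf{Step 1: decomposition by characters.} Since $\overline{D}_\rho$ is the Artin--Schreier covering $y^p-y=xR(x)$ of $\mathbb{P}^1$, we have
\[
H^1(\overline{D}_{\rho,\F},\Ql)\cong\bigoplus_{\psi\neq1}H^1_c(\A^1_\F,\mathcal{L}_\psi(xR(x))).
\]
Each summand has dimension $p^e=\#W$, because $\deg xR = p^{e'}+1$ with $2e' = 2e$, which matches the degree of $F^*\circ(aF)$. Hence it suffices to show, for every $m\ge1$ and every $\psi\ne1$,
\[
-\sum_{x\in\F_{q^m}}\psi_{q^m}(xR(x))=\sum_{v\in W}\tau_{\psi,v}^m .
\]
The eigenvalue identification then follows from Newton's identities, applied over all $m$.

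\textbf{Step 2: symmetrize the quadratic form.} The form $Q(x)=\Tr(xR(x))$ on $\F_{q^m}$ has associated bilinear form
\[
B(x,y)=\Tr\bigl(xR(y)+yR(x)\bigr)=\Tr\bigl(x(R+R^*)(y)\bigr),
\]
using the adjoint property $\Tr(xf(y))=\Tr(f^*(x)y)$ over a field containing the coefficients. Since $p$ is odd, $Q(x)=\tfrac12 B(x,x)$, so
\[
Q(x)=\tfrac12\Tr\bigl(x\,F^*(aF(x))\bigr)=\tfrac12\Tr\bigl(aF(x)^2\bigr).
\]
The sum therefore becomes $\sum_x\psi_{q^m}(2^{-1}aF(x)^2)$. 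Here $F=F_W$ is $\F_p$-linear on $\F_{q^m}$. I would use that $F_W$ is (up to the Frobenius twist built into its definition) the adjoint of $\prod_{t\in W}(x-t)$, so that $\mathrm{Im}(F)^\perp=W$ under $\Tr(xy)$ and $\#\ker F\cdot\#\mathrm{Im}F=q^m$. Checking this relation for the paper's $F_W$ is a lemma in itself.

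\textbf{Step 3: Fourier expansion.} Write the indicator of $\mathrm{Im}F=W^\perp$ as $\frac{1}{\#W}\sum_{v\in W}\psi_{q^m}(vz)$. Then
\[
\sum_x\psi_{q^m}(2^{-1}aF(x)^2)=\#\ker F\cdot\frac{1}{\#W}\sum_{v\in W}\sum_{z\in\F_{q^m}}\psi_{q^m}(2^{-1}az^2+vz).
\]
Completing the square, $2^{-1}az^2+vz=2^{-1}a(z+a^{-1}v)^2-2^{-1}a^{-1}v^2$, so each inner sum equals $\psi_{q^m}(-2^{-1}a^{-1}v^2)$ times a quadratic Gauss sum. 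That Gauss sum is $(\tfrac{2^{-1}a}{q^m})\,g_m$, where $g_m=\sum_z\psi_{q^m}(z^2)$ equals $(-1)^{nm-1}G_\psi^{nm}$ by Hasse--Davenport. The factor $\#\ker F/\#W$ should be $1$, which is the main obstacle: it requires that $F$ be injective on $\F_{q^m}$, or more precisely that the kernel counts cancel correctly. I would verify this from $\mathrm{Im}F^\perp=W$ together with $\dim\ker F=\dim W$, which gives $\#\ker F=\#W$. The factor $\#\ker F$ in front is absorbed against the $1/\#W$.

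\textbf{Step 4: match signs.} We have $(\tfrac{2^{-1}a}{q^m})=(\tfrac{2a}{q})^m$ and $\psi_{q^m}(c)=\psi_q(c)^m$ for $c\in\F_q$. With the minus sign from the trace formula, $-(-1)^{nm-1}G_\psi^{nm}=(G_\psi^n)^m$ up to the convention $G_\psi=-\sum\psi(x^2)$, whose sign absorbs $(-1)^{nm}$. This gives $\sum_v\tau_{\psi,v}^m$. The delicate points are therefore the adjointness and kernel claims in Step 2 and this sign bookkeeping.
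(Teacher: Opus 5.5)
Your proof is correct. It follows the same ideas as the paper but carries them out with trace functions over every extension $\F_{q^m}$, where the paper works with sheaves.

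\textbf{How the two routes compare.} The paper turns the congruence $aF_W(x)^2\sim 2xR_\rho(x)$ of Lemma~\ref{lem: Fast aF=R+Rast}(ii) into a sheaf isomorphism $F_W^{-1}\mathcal{L}_\psi(2^{-1}ax^2)\cong\mathcal{L}_\psi(xR_\rho(x))$. It then uses the projection formula and Lemma~\ref{fpush}, i.e.\ $F_{W\ast}\Ql\cong\bigoplus_{v\in W}\mathcal{L}_\psi(vx)$. This splits $H^1_{\mathrm{c}}(D_{\rho,\F},\Ql)$ canonically into the Frobenius-stable lines $H^1_{\mathrm{c}}(\A^1,\mathcal{L}_\psi(2^{-1}ax^2+vx))$. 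A single trace computation over $\F_q$ then gives each eigenvalue, so no extensions $\F_{q^m}$ and no Newton identities are needed.

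Your steps are the trace-function shadows of these. The pointwise identity $\Tr(xR(x))=\tfrac12\Tr(aF(x)^2)$ shadows the sheaf isomorphism. Your expansion $\#\{x\in\F_{q^m}\mid F(x)=z\}=\sum_{v\in W}\psi_{q^m}(vz)$ shadows Lemma~\ref{fpush}; it comes from $\mathrm{Im}F=W^\perp$ and $\#\ker F=\#W$, which is essentially Lemma~\ref{exact} applied to $f=F_W^\ast$ over $\F_{q^m}$.

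\textbf{What each route buys.} Yours is more elementary: it uses only the Lefschetz trace formula for Artin--Schreier sheaves and finite Fourier analysis, with no perfection argument and no pushforward computation. The cost is that you must run the computation uniformly in $m$ and recover $\det(1-\mathrm{Fr}_qT)$ from power sums. You also obtain only the multiset of eigenvalues, not the paper's canonical decomposition of the cohomology indexed by $(\psi,v)$.

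\textbf{Two slips to clean up.}
\begin{enumerate}
\item $F_W$ is never injective on $\F_{q^m}$ when $e\ge1$: its kernel there has exactly $\#W$ elements. Only the count $\#\ker F=\#W$ matters, and that is what you actually prove.
\item With the paper's normalization $G_\psi=-\sum_{x\in\F_p}\psi(x^2)$, Hasse--Davenport gives $g_m=-G_\psi^{nm}$ directly. Your Step~3 formula $g_m=(-1)^{nm-1}G_\psi^{nm}$ is right only if $G_\psi$ there means the un-negated sum. Step~4 compensates correctly, so the final identity $-\sum_{x\in\F_{q^m}}\psi_{q^m}(xR_\rho(x))=\sum_{v\in W}\tau_{\psi,v}^m$ holds.
\end{enumerate}
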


The following theorem 
provides an explicit construction 
of $\F_q$-extremal van der Geer--van der Vlugt curves. Recall that $[\F_q:\F_p]$ is even if there exists an $\F_q$-extremal van der Geer--van der Vlugt curve. 
\begin{theorem}[{Theorem~\ref{t1}}]\label{theorem:  intro}
Assume that $p_0$ is odd and that $[\F_q:\F_p]$ is even. Then the following hold.  
\begin{itemize}
\item[{\rm (i)}] Let $\rho=(W,a)$ be a pair as above. 
Then the curve $\overline{D}_\rho$ is $\F_q$-extremal if and only if 
\[\Tr_{q/p}(a^{-1}v^2)=0 \qquad\text{for all }v\in W.\]
Moreover, if these conditions are satisfied, then $\overline{D}_\rho$ is $\F_q$-maximal (resp.\ $\F_q$-minimal) if and only if 
    \[
    \left(\frac{a}{q}\right)\cdot \left(\frac{-1}{\sqrt{q}}\right)=-1 \qquad\text{(resp. }\left(\frac{a}{q}\right)\cdot \left(\frac{-1}{\sqrt{q}}\right)=1). 
    \]
\item[{\rm (ii)}] 
Let $R(x)\in\F_q[x]$ be an $\F_p$-linearized polynomial. Assume that  $\overline{C}_R$ is 
$\F_q$-extremal. Then there exists a pair $\rho=(W,a)$ satisfying 
\[R=R_\rho. \]
Consequently, we have $C_{R}=D_\rho$. 
\end{itemize}
\end{theorem}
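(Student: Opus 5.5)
For part (i) I will work directly from the first theorem (Theorem~\ref{lp}). Write $n=[\F_q:\F_p]$, which is even. The curve $\overline{D}_\rho$ is extremal exactly when all Frobenius eigenvalues $\tau_{\psi,v}$ equal a common value $\epsilon\sqrt q$ with $\epsilon\in\{\pm1\}$. Here $\epsilon=-1$ corresponds to maximal and $\epsilon=+1$ to minimal, since $\#\overline{D}_\rho(\F_q)=q+1-\sum\tau_{\psi,v}$.

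First I compute $G_\psi^n$. The quadratic Gauss sum satisfies $G_\psi^2=\left(\frac{-1}{p}\right)p$. This holds for $\F_p$ with $p$ a power of $p_0$: use the Hasse--Davenport relation to reduce to the prime case, or compute directly $\bigl(\sum_x\psi(x^2)\bigr)^2=\left(\frac{-1}{p}\right)p$. Since $n$ is even, $G_\psi^n=\left(\frac{-1}{p}\right)^{n/2}p^{n/2}=\left(\frac{-1}{p}\right)^{n/2}\sqrt q$. I then check that $\left(\frac{-1}{p}\right)^{n/2}=\left(\frac{-1}{\sqrt q}\right)$. This holds because the quadratic character of $\F_{\sqrt q}$ restricted to $\F_p$ is $\left(\frac{\cdot}{p}\right)^{n/2}$, as $[\F_{\sqrt q}:\F_p]=n/2$. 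Next, $\left(\frac{2}{q}\right)=1$, because $2\in\F_{p_0}\subset\F_{q}$ is a square whenever $[\F_q:\F_{p_0}]$ is even. Hence
\[
\tau_{\psi,v}=\psi_q(-2^{-1}a^{-1}v^2)\left(\frac{a}{q}\right)\left(\frac{-1}{\sqrt q}\right)\sqrt q .
\]

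The sign factor in this formula is independent of $\psi$ and $v$. So the eigenvalues all coincide if and only if $\psi_q(-2^{-1}a^{-1}v^2)=1$ for all nontrivial $\psi$ and all $v\in W$. The values $\psi(c)$ for $\psi\ne1$ are all trivial exactly when $c=0$. Thus the condition is $\Tr_{q/p}(2^{-1}a^{-1}v^2)=0$, and since $2^{-1}\in\F_p$ this is equivalent to $\Tr_{q/p}(a^{-1}v^2)=0$ for all $v\in W$. In the case $v=0$ the eigenvalue equals the common sign times $\sqrt q$ regardless. So if some $v$ has nonzero trace, that eigenvalue differs from the one at $v=0$ and extremality fails; this gives the ``only if'' direction. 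When the condition holds, $\epsilon=\left(\frac{a}{q}\right)\left(\frac{-1}{\sqrt q}\right)$, and maximality corresponds to $\epsilon=-1$, as claimed.

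For part (ii), the task is to show that any $R$ with $\overline{C}_R$ extremal admits a factorization $R+R^\ast=F_W^\ast\circ(aF_W)$. I expect this to be the main obstacle. I would use the earlier analysis of van der Geer--van der Vlugt curves, in the style of \cite{ITT1} and the odd-characteristic counterpart presumably developed before Theorem~\ref{t1}. Set $E_R(x)=R^{p^e}+ (R^\ast)^{p^e}$, a separable additive polynomial whose root space $V_R$ is the radical of the bilinear form $(x,y)\mapsto \Tr(xR(y)+yR(x))$. The known criterion is that extremality over $\F_q$ forces $V_R\subset\F_q$ and all the relevant characters to be trivial; equivalently, the quadratic form $Q(x)=xR(x)$ has the required splitting. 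I would then take $W$ to be a suitable $\F_p$-subspace (a maximal isotropic/kernel-type subspace inside $\F_q$ determined by $V_R$). I would show that the additive polynomial $R+R^\ast$, which is self-adjoint, is divisible on the right by $F_W$, using the fact that the roots of the relevant twist lie in $W$. The quotient is then forced by self-adjointness to be $F_W^\ast\circ a$ for a scalar $a$, after a degree comparison. Uniqueness of $R$ given $R+R^\ast$ (stated in the introduction) then yields $R=R_\rho$, and hence $C_R=D_\rho$. The delicate point is to verify that the quotient is scalar rather than a higher-degree self-adjoint polynomial. I would attack this by counting: extremality forces $\deg$ of the quotient to match $\#W$ via the genus $g=(p-1)p^{e}/2$ and the number of eigenvalues $(p-1)\#W$ in Theorem~\ref{lp}.
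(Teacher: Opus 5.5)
Your part (i) is correct and is the paper's argument. You specialize $\tau_{\psi,v}$ from Theorem~\ref{lp} using $G_\psi^n=\left(\frac{-1}{\sqrt q}\right)\sqrt q$ and $\left(\frac{2}{q}\right)=1$. Then all eigenvalues agree with the one at $v=0$ if and only if $\Tr_{q/p}(a^{-1}v^2)=0$ on $W$.

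\textbf{Part (ii) has a genuine gap.} Its whole content is producing a factorization $R+R^\ast=F^\ast\circ(aF)$ with $a\in\F_q^\times$ and $F\in\F_q[x]$, and your proposed mechanism does not produce it.

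Once $V_R\subset\F_q$ is known (Lemma~\ref{lemma: When CR is extremal Fq Fp is even and VR in Fq}), right division of $E=R+R^\ast$ is automatic for a separable $F$ whose kernel $U$ is \emph{any} $e$-dimensional subspace of $V_R$. The cofactor $G$ in $E=G\circ F$ is then supported on $x^{p^{-e}},\dots,x^{p^0}$, which is already the shape of $F^\ast\circ a$. So a degree comparison gives nothing. Self-adjointness only yields $G\circ F=F^\ast\circ G^\ast$, and you still need $\ker G^\ast=\ker F$.

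That is where the choice of $U$ as a maximal totally isotropic subspace enters. The relevant pairing is the canonical nondegenerate \emph{symplectic} pairing on $V_R$ \cite[Lemma~2.6]{ITT1}, used via \cite[Corollary~2.10]{ITT1}. If $E=F^\ast\circ(aF)$, then $\ker F$ is forced to be isotropic, so for $e\ge 2$ a non-isotropic $U$ fails.

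The symmetric form $\Tr_{q/p}(xR(y)+yR(x))=\Tr_{q/p}(xE(y))$ that you mention has $V_R$ as its radical. Isotropy for it is therefore vacuous on $V_R$ and cannot single out $U$. Your fallback of counting eigenvalues via Theorem~\ref{lp} is circular, because that theorem computes the $L$-polynomial only for curves already known to be of the form $D_\rho$.

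You also conflate the two subspaces in play. In the paper, $W$ is not the Lagrangian $U=\ker F\subset V_R$ but $W=\ker F^\ast$. This equals $aF(V_R)$, since $aF\colon V_R\to\ker F^\ast$ has kernel $U$ of size $p^e$ and is onto by counting. That is how one gets $W\subset\F_q$, which is required for $(W,a)$ to be an admissible pair. Since $F_W$ is characterized by $\ker F_W^\ast=W$, right-dividing $E$ by $F_W$ for some $W\subset V_R$ uses the wrong polynomial.

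Finally, you must normalize $F$ so that $b_0=1$. Then $F^\ast(x)^{p^e}$ is monic, separable and equal to $f_W$, which gives $F=F_W$. Only then does uniqueness in Lemma~\ref{lem: Fast aF=R+Rast}(i) yield $R=R_\rho$.
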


\begin{remark}
In \cite{ITT}, the authors studied van der Geer--van der Vlugt curves in odd characteristic by a different method. They gave a criterion for $\overline{C}_R$ to be $\F_q$-maximal or $\F_q$-minimal in \cite[Theorem~4.11]{ITT}. An important ingredient of their criterion is \cite[Condition~4.6]{ITT}. After translating their notation into ours, one can show that this condition is equivalent to the condition appearing in Theorem~\ref{theorem: intro}(i) (cf.\ \S\ref{ssec: Comparison with ITT}). Thus, part~(i) is essentially equivalent to \cite[Theorem~4.11]{ITT}. 

From this perspective, the main new contribution of Theorem~\ref{theorem: intro} is part~(ii), which shows that every $\F_q$-extremal van der Geer--van der Vlugt curve arises from a pair $\rho=(W,a)$ as above. 
\end{remark}

As a consequence of Theorem~\ref{theorem:  intro}, we obtain a simple construction of $\F_q$-maximal curves.
\begin{theorem}[Corollary \ref{maxc}]
Assume that $p_0$ is odd and that $[\F_q:\F_p]$ is even. Let 
$\sqrt{q}:=p^{[\F_q:\F_p]/2}$. 
Let $\xi \in \F_q$ be such that $\xi^{\sqrt{q}-1}=-1$.
Let 
$\eta \in \F_q^{\times}$ and set 
$a:=\xi^{-1} \eta^2 \in \F_q^{\times}$. Let $W$ be an $\F_p$-vector subspace of $\eta\, \F_{\sqrt{q}}$. 
Then the curve $\overline{D}_{(W,a)}$ is $\F_q$-maximal of genus 
$(\# W) \cdot (p-1)/2$. 
\end{theorem}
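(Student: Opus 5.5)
Since this statement is a corollary of Theorem~\ref{theorem: intro}(i), I will check the two hypotheses there for $\rho=(W,a)$ with $a=\xi^{-1}\eta^2$ and $W\subset\eta\,\F_{\sqrt q}$. Then I compute the genus from the $L$-polynomial of the first theorem.

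\emph{The trace condition.} Let $v\in W$ and write $v=\eta u$ with $u\in\F_{\sqrt q}$. Then $a^{-1}v^2=\xi\eta^{-2}\eta^2u^2=\xi u^2$. Since $\xi^{\sqrt q}=-\xi$ and $u^{\sqrt q}=u$, we get $\Tr_{q/\sqrt q}(\xi u^2)=\xi u^2+(\xi u^2)^{\sqrt q}=0$. Now $\F_{\sqrt q}\supset\F_p$, because $[\F_q:\F_p]$ is even, so $\Tr_{q/p}=\Tr_{\sqrt q/p}\circ\Tr_{q/\sqrt q}$. Hence $\Tr_{q/p}(a^{-1}v^2)=0$ for all $v\in W$, and by Theorem~\ref{theorem: intro}(i) the curve $\overline D_\rho$ is $\F_q$-extremal.

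\emph{The sign.} We have $\left(\frac{a}{q}\right)=\left(\frac{\xi}{q}\right)$ because $\eta^2$ is a square. The element $\xi$ satisfies $\xi^{\sqrt q-1}=-1$, so
\[
\xi^{(q-1)/2}=\bigl(\xi^{\sqrt q-1}\bigr)^{(\sqrt q+1)/2}=(-1)^{(\sqrt q+1)/2}.
\]
On the other hand, $\left(\frac{-1}{\sqrt q}\right)=(-1)^{(\sqrt q-1)/2}$. Multiplying the two gives $(-1)^{\sqrt q}=-1$, since $\sqrt q$ is odd. So $\left(\frac{a}{q}\right)\cdot\left(\frac{-1}{\sqrt q}\right)=-1$, and Theorem~\ref{theorem: intro}(i) says $\overline D_\rho$ is $\F_q$-maximal. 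The element $\xi$ exists because $\F_q^\times$ is cyclic of order $(\sqrt q-1)(\sqrt q+1)$, so it contains elements of order $2(\sqrt q-1)$. This step is the only real computation, and the points to watch are the parity conventions: $\sqrt q$ is odd, and $(\sqrt q\pm1)/2$ are integers.

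\emph{The genus.} By the first theorem (Theorem~\ref{lp}), $\deg L(\overline D_\rho/\F_q,T)=(p-1)\cdot\#W$. This degree equals $2g$, so $g=(\#W)(p-1)/2$. The argument has no genuine obstacle; it is a direct verification of the hypotheses of Theorem~\ref{theorem: intro}(i) followed by this degree count.
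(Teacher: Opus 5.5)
Your proof is correct and follows the paper's argument. You verify $\Tr_{q/p}(a^{-1}v^2)=0$ via $\Tr_{q/\sqrt{q}}(\xi u^2)=0$, derive $\left(\frac{a}{q}\right)\left(\frac{-1}{\sqrt{q}}\right)=-1$ from $\xi^{(q-1)/2}=(-1)^{(\sqrt{q}+1)/2}$, and apply Theorem~\ref{t1}(i); your genus count $2g=\deg L=(p-1)\cdot\#W$ is also valid, and matches what the paper gets from Lemma~\ref{lem: geometric properties of CRs} together with $\deg R_\rho=\#W$.
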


 In Section~3, we review the characteristic-two analogues of Theorems~1.1 and~1.2 established in \cite{ITT1}, and highlight the similarities and differences between the odd- and even-characteristic cases.

In Section \ref{appC}, as an application of Theorem~1.2 and its characteristic-two counterpart, 
 we reprove several results from \cite{CO0}. 
More precisely, we show that there exist $\F_q$-maximal and $\F_q$-minimal curves of prescribed genus within the family of van der Geer--van der Vlugt curves (cf.\ Theorem \ref{theorem: existence of extremal curves}). 
We also prove that an $\F_q$-maximal 
van der Geer--van der Vlugt curve  is a finite quotient of 
the Hermitian curve defined by $y^{\sqrt{q}}+y=x^{\sqrt{q}+1}$ (cf.\ Theorem \ref{theorem: quotient of Hermitian curve}).

\section{Preliminaries}
\subsection{$\F_p$-linearized polynomials}
Let $\F_q[x^{p^{-\infty}}]$ denote the perfection of the polynomial ring $\F_q[x]$. Define $\mathcal R$ to be the subset of $\F_q[x^{p^{-\infty}}]$ consisting of all finite sums of the form 
\[
\sum_{i\in \Z}a_ix^{p^i} \qquad(a_i\in\F_q). 
\]
An element of $\mathcal R$ of the form $\sum_{i\geq0} a_i x^{p^i}$ is called an \emph{$\F_p$-linearized polynomial}.

For two elements $f,g\in \mc R$, define  
\[
f\circ g\coloneqq f(g(x)). 
\]
It is straightforward to check that $f\circ g\in \mc R$. Moreover, $\mc R$ becomes a non-commutative ring under the usual addition and the multiplication $\circ$.

For an element $f=\sum_i a_ix^{p^i}\in\mc R$,  define its \emph{adjoint} by
\[
f^\ast(x)\coloneqq\sum_{i} (a_i x)^{p^{-i}}=
\sum_{i} a_i^{p^{-i}} x^{p^{-i}}. 
\]
The operator $(-)^\ast$ satisfies the relations 
\[
(f+g)^\ast=f^\ast+g^\ast,\quad (f\circ g)^\ast=g^\ast\circ f^\ast,\quad f^{\ast\ast}=f,\quad a^\ast=a\quad(f,g\in\mc R,\ a\in\F_q). 
\]

We now recall some basic properties of the adjoint. 
Let $A$ be an $\F_{p}$-algebra. For $f,g\in A$, 
write 
\[
f\sim_A g
\]
if there exists $h\in A$ such that
\[
f-g=h^p - h. 
\]
When $A$ is clear from the context, we simply write $\sim$ instead of $\sim_A$.

\begin{lemma}\label{lem: basic property of sim}
The following statements hold. 
\begin{enumerate}
    \item Let $B$ be the perfection of $A$. For $f,g\in A$, regard $f$ and $g$ also as elements of $B$. Then 
    \[
    f\sim_A g\qquad\text{if and only if}\qquad f\sim_{B}g. 
    \]
\item Let $f(x)\in \mc R$, and let $A$ be the perfection of $\F_q[x,y]$. 
Then 
\[
xf(y)\sim_{A}f^\ast(x)y. 
\]
\end{enumerate}
\end{lemma}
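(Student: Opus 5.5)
Both parts reduce to the elementary identity $h^{p^k}-h=\sum_{j=0}^{k-1}\bigl(h^{p^j}\bigr)^p-h^{p^j}$, which makes $h^{p^k}\sim h$ in any $\F_p$-algebra, together with its analogue for negative exponents in the perfection.

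For part~(1), the direction $f\sim_A g\Rightarrow f\sim_B g$ is immediate, since $A\subset B$. For the converse, suppose $f-g=h^p-h$ with $h\in B$. I would first recall that $B=\varinjlim A$ along Frobenius, so $h^{p^k}$ lies in the image of $A$ for some $k\geq 0$. Next I would write
\[
f-g=h^p-h=\bigl(h^{p^{k+1}}-h^{p^k}\bigr)-\sum_{j=1}^{k}\bigl((h^{p^{j}})^p-h^{p^j}\bigr).
\]
The first bracket equals $u^p-u$ with $u=h^{p^k}\in A$. The remaining sum telescopes, and I would rearrange the identity directly as
\[
h^p-h=\bigl(h^{p^{k+1}}-h^{p^k}\bigr)-\bigl(h^{p^{k+1}}-h^p\bigr)+\bigl(h^{p^k}-h\bigr).
\]
This shows that $f-g-(u^p-u)=h^{p^k}-h^{p^{k+1}}+h^p-h$, which does not immediately land in $A$.

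A cleaner route applies Frobenius. Raising $f-g=h^p-h$ to the $p^k$-th power gives $f^{p^k}-g^{p^k}=u^p-u$ with $u\in A$. Since $f^{p^k}\sim_A f$ and $g^{p^k}\sim_A g$ by the identity above, applied to $f,g\in A$, we get $f\sim_A g$. I should check that ``lies in $A$'' makes sense when $A$ is non-reduced. In that case $A\to B$ need not be injective, so I would interpret the statement for reduced $A$, or phrase everything in the image of $A$. This is fine here, since the rings of interest are polynomial rings.

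For part~(2), both sides are additive in $f$, because $(f+g)^\ast=f^\ast+g^\ast$ and $\sim$ is compatible with addition. It therefore suffices to treat a monomial $f=bx^{p^i}$ with $i\in\Z$. Then $xf(y)=b\,x\,y^{p^i}$ and $f^\ast(x)y=b^{p^{-i}}x^{p^{-i}}y$. Setting $h=b\,x\,y^{p^i}\in A$, we have $h^{p^{-i}}=b^{p^{-i}}x^{p^{-i}}y$. The claim is then $h\sim_A h^{p^{-i}}$, which holds in the perfect ring $A$ for any integer exponent: for $i\le 0$ it is the identity above, and for $i>0$ apply the identity to $h'=h^{p^{-i}}$, whose $p^i$-th power is $h$.

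The only real subtlety is the bookkeeping in part~(1), where one has to pass from $h\in B$ back to $A$ via Frobenius. The Frobenius-power argument handles it.
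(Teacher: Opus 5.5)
Your proposal is correct. Part (2) is the paper's own argument: reduce by additivity to a monomial $bx^{p^i}$, then use $z\sim_A z^{p^j}$ for all $j\in\Z$ in the perfect ring $A$.

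For part (1) you take a genuinely different route. The paper identifies $A/\wp(A)$ and $B/\wp(B)$ with $H^1_{\et}(-,\F_p)$ via Artin--Schreier theory. It then invokes the invariance of \'etale cohomology under the universal homeomorphism $\Spec B\to\Spec A$. This shows at once that $A/\wp(A)\to B/\wp(B)$ is an isomorphism for arbitrary $A$, but it relies on a nontrivial external input.

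Your argument is elementary and self-contained. Raising $f-g=h^p-h$ to the $p^k$-th power places the witness $u=h^{p^k}$ in $A$. The identities $f\sim_A f^{p^k}$ and $g\sim_A g^{p^k}$ then finish the proof. This gives only injectivity of $A/\wp(A)\to B/\wp(B)$, which is all the statement needs. Surjectivity would also follow from the same identity, since $h\sim_B h^{p^k}$.

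Two small remarks. First, the initial telescoping attempt is a dead end and should be deleted. Second, the restriction to reduced $A$ is unnecessary. Suppose the identity $f^{p^k}-g^{p^k}=u^p-u$ holds only after mapping to $B$. Then the discrepancy $\delta\in A$ maps to $0$ in $\varinjlim_{\mathrm{Frob}}A$, so $\delta^{p^m}=0$ for some $m$. Raising once more to the $p^m$-th power gives $f^{p^{k+m}}-g^{p^{k+m}}=(u^{p^m})^p-u^{p^m}$ honestly in $A$, and the same conclusion follows.
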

\begin{proof}
    (i)  Let $\wp(x)\coloneqq x^p-x$. 
    By  Artin--Scheier theory, we have 
    \[
    H_\et^1({\rm Spec}(A),\F_p)\cong A/\wp(A),\qquad H_\et^1({\rm Spec}(B),\F_p)\cong B/\wp(B). 
    \]
    The assertion therefore follows from the invariance of \'etale cohomology under perfection, which yields an isomorphism 
    \[
    H_\et^1({\rm Spec}(A),\F_p)\overset{\cong}{\to}H_\et^1({\rm Spec}(B),\F_p). 
    \]

\noindent
     (ii) By linearity, it suffices to treat the case $f(x)=ax^{p^i}$. For every $z\in A$, we have $z^p\sim_A z$. Hence, 
     \[
     xf(y)=xay^{p^i}\sim_A(xa)^{p^{-i}}y=f^\ast(x)y. 
     \]
   The assertion follows. 
\end{proof}

Each element $f\in\mc R$ defines an $\F_p$-linear endomorphism of $\F$. We write 
\[
\ker f\coloneqq \{x\in\F\mid f(x)=0\}. 
\]
\begin{lemma}\label{exact}
Let $f\in\mc R$, and set $V_q\coloneqq\ker f\cap\F_q$. 
Then the sequence 
\[
\F_q\xrightarrow{f^\ast}\F_q\xrightarrow{
a\mapsto \bigl(x\mapsto \Tr_{q/p}(ax)\bigr)} \Hom_{\F_p}(V_q,\F_p)\to0 
\]
is exact. 
\end{lemma}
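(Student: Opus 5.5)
The plan is to prove exactness via the trace pairing $\langle a,x\rangle:=\Tr_{q/p}(ax)$ on $\F_q$, which is a nondegenerate $\F_p$-bilinear form. First I will note that $f^\ast$ makes sense as a map $\F_q\to\F_q$, since $x\mapsto x^{p^{-i}}$ is a bijection of $\F_q$, and that $f$ maps $\F_q$ to $\F_q$ because $f\in\mc R$ has coefficients in $\F_q$. The key identity is that $f^\ast$ is the adjoint of $f$ with respect to this pairing:
\[
\Tr_{q/p}\bigl(f^\ast(a)\,x\bigr)=\Tr_{q/p}\bigl(a\,f(x)\bigr)\qquad(a,x\in\F_q).
\]
This is the specialization of Lemma~\ref{lem: basic property of sim}(ii) to $\F_q$-points. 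Directly, by linearity it suffices to take $f=bx^{p^i}$. Then
\[
\Tr_{q/p}\bigl(ab\,x^{p^i}\bigr)=\Tr_{q/p}\bigl((ab)^{p^{-i}}x\bigr),
\]
because $\Tr_{q/p}(z^p)=\Tr_{q/p}(z)$ for all $z\in\F_q$.

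Surjectivity of the second map follows because the restriction $\Hom_{\F_p}(\F_q,\F_p)\to\Hom_{\F_p}(V_q,\F_p)$ is surjective, since $V_q$ is a subspace of $\F_q$. Moreover, every functional on $\F_q$ has the form $x\mapsto\Tr_{q/p}(ax)$ by nondegeneracy of the trace form. The composite is zero: for $x\in V_q$ we have
\[
\Tr_{q/p}\bigl(f^\ast(b)\,x\bigr)=\Tr_{q/p}\bigl(b\,f(x)\bigr)=0.
\]
Thus $\operatorname{im} f^\ast\subseteq V_q^\perp$, where $\perp$ denotes the orthogonal complement under the trace pairing.

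For the reverse inclusion, I will argue by dimension counting over $\F_p$. The kernel of the second map is exactly $V_q^\perp$, which has dimension $n-\dim V_q$. By the adjoint identity,
\[
\ker(f^\ast|_{\F_q})=(\operatorname{im} f|_{\F_q})^\perp,
\]
so
\[
\dim\operatorname{im} f^\ast = n-\dim\ker f^\ast = \dim\operatorname{im}(f|_{\F_q}) = n-\dim V_q.
\]
The two spaces therefore have equal dimension, and so $\operatorname{im} f^\ast=V_q^\perp$. The only points needing care are the adjoint identity and the fact that elements of $\mc R$ with negative exponents still act as $\F_p$-linear maps on $\F_q$; both are immediate.
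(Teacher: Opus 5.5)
Your proof is correct and follows the paper's structure: surjectivity by extending functionals from $V_q$ to $\F_q$ and representing them via the trace form, the complex property via the adjoint identity (the paper's $f^\ast(u)x\sim uf(x)$), and then a dimension count. The only difference is in the last step. The paper gets $\#\ker(f^\ast|_{\F_q})\ge\#V_q$ from surjectivity and then symmetrizes using $f^{\ast\ast}=f$, whereas you identify $\ker(f^\ast|_{\F_q})=(\operatorname{im} f|_{\F_q})^\perp$ directly; both work equally well.
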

\begin{proof}
Any $\F_p$-linear map $V_q \to \F_p$ extends to 
an $\F_p$-linear endomorphism of $\F_q$. Moreover, every $\F_p$-linear endomorphism of $\F_q$ is of  the 
form $x \mapsto \Tr_{q/p}(ax)$ for some $a \in \F_q$. Therefore, 
the homomorphism $\F_q\to \Hom_{\F_p}(V_q,\F_p)$ is surjective. 

Let $u \in \F_q$ and $x\in V_q$. Since $f^\ast(u)x\sim uf(x)=0$, 
we have  
\[\Tr_{q/p}(f^\ast(u) x)=0.\]
Hence the sequence 
\[
\F_q\overset{f^\ast}{\to}\F_q\to\Hom_{\F_p}(V_q,\F_p)
\]
is a complex. Let 
\[
\varphi\colon \F_q/f^\ast(\F_q) \to \Hom_{\F_p}(V_q,\F_p)
\]
denote the induced map. 
It remains to show that $\varphi$ is an isomorphism. Since $\varphi$ is
surjective, it suffices to show that the source and target have the same
cardinality. We have 
\[
\#(\ker f^\ast\cap \F_q)=
\#(\F_q/f^\ast(\F_q))\ge \#V_q=\#(\ker f \cap \F_q). 
\]
Applying the same argument with $f^\ast$ in place of $f$, we obtain
 \[\#(\ker f^\ast\cap \F_q)\le \#(\ker f \cap \F_q)=\#V_q. \]
 Therefore, 
\[
\#(\F_q/f^\ast(\F_q))= \#V_q. 
\]
 This completes the proof. 
\end{proof}

We now prove two lemmas that are key ingredients for our main results. 

\begin{lemma}\label{lem: Fast aF=R+Rast}
Assume that $p_0$ is odd. 
Let $a\in \F_q^\times$ and $F\in\mc R$ be an $\F_p$-linearized polynomial. Then the following hold. 
\begin{enumerate}
    \item The composition $F^\ast \circ (aF)$ can be written uniquely in the form
\begin{equation}\label{r}
F^\ast \circ (aF)=R+R^\ast,
\end{equation}
where $R(x)$ is an $\F_p$-linearized polynomial. 
\item Let $R(x)$ be as in {\rm (i)}. Then 
\[
a F(x)^2 \sim_{\F_q[x]} 2 x R(x). 
\]
\end{enumerate}
\end{lemma}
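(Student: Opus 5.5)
Let $G\coloneqq F^\ast\circ(aF)$. By the relations $(f\circ g)^\ast=g^\ast\circ f^\ast$, $a^\ast=a$ and $F^{\ast\ast}=F$ we get
\[
G^\ast=(aF)^\ast\circ F^{\ast\ast}=F^\ast\circ a\circ F=G,
\]
so $G$ is self-adjoint. Write $G=\sum_{i\in\Z} g_i x^{p^i}$. If $F$ has degree $p^e$, then $aF$ involves only exponents $p^0,\dots,p^e$ and $F^\ast$ only $p^{-e},\dots,p^0$. Hence $g_i=0$ for $|i|>e$. Self-adjointness gives $g_{-i}=g_i^{p^{-i}}$ for all $i$.

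For existence, set
\[
R(x)\coloneqq 2^{-1}g_0x+\sum_{i\geq1}g_ix^{p^i},
\]
which is legitimate since $p_0$ is odd. Then
\[
R^\ast=2^{-1}g_0x+\sum_{i\ge1}g_i^{p^{-i}}x^{p^{-i}}=2^{-1}g_0x+\sum_{i\geq1}g_{-i}x^{p^{-i}},
\]
so $R+R^\ast=G$. For uniqueness, suppose $R+R^\ast=0$ with $R=\sum_{i\ge0}r_ix^{p^i}$. Comparing the coefficients of $x^{p^i}$ for $i>0$ gives $r_i=0$, because $R^\ast$ only contributes exponents $p^{-i}$ with $i\ge0$. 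At $x$ we get $2r_0=0$, so $r_0=0$ since $p$ is odd. This proves (i).

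For (ii), let $A$ be the perfection of $\F_q[x]$. By Lemma~\ref{lem: basic property of sim}(i) it suffices to prove the relation in $A$. Specializing Lemma~\ref{lem: basic property of sim}(ii) along $y\mapsto x$ (this map sends $\wp$-images to $\wp$-images) shows that $x f(x)\sim_A f^\ast(x)x$ for every $f\in\mc R$. Applying this with $f=F$ and then substituting $x\mapsto aF(x)$ gives
\[
aF(x)\cdot F(x)\sim_A F^\ast(aF(x))\cdot x=xG(x).
\]
Here I use that a ring endomorphism of $A$ preserves $\sim_A$; I would apply the two-variable lemma with $x\mapsto aF(x)$, $y\mapsto x$. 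Next,
\[
xG(x)=xR(x)+xR^\ast(x)\sim_A xR(x)+xR(x)=2xR(x),
\]
again by the lemma applied to $f=R^\ast$, using $R^{\ast\ast}=R$. Thus $aF(x)^2\sim_A 2xR(x)$, and Lemma~\ref{lem: basic property of sim}(i) descends this to $\F_q[x]$.

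The only delicate point is making sure the substitutions are ring homomorphisms of the perfections. The map $\F_q[x,y]^{\mathrm{perf}}\to\F_q[x]^{\mathrm{perf}}$ sending $x\mapsto aF(x)$ and $y\mapsto x$ extends uniquely to perfections, so $\sim$ is preserved. Everything else is coefficient bookkeeping.
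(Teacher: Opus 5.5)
Your proof is correct and follows the paper's argument essentially verbatim. For (i) you use self-adjointness of $F^\ast\circ(aF)$ and take $R=\sum_{i\ge1}g_ix^{p^i}+2^{-1}g_0x$; for (ii) you use the chain $aF(x)^2\sim_A xF^\ast(aF(x))=x(R+R^\ast)(x)\sim_A 2xR(x)$ in the perfection, descended to $\F_q[x]$ via Lemma~\ref{lem: basic property of sim}(i). You spell out uniqueness and the substitution $x\mapsto aF(x)$, $y\mapsto x$ in the two-variable relation more carefully than the paper does. Your phrase about first specializing $y\mapsto x$ and then substituting is muddled, but the substitution you then state explicitly is the right one.
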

\begin{proof}
(i) Write 
\[
F^\ast\circ(aF)=\sum_ia_ix^{p^i}. 
\]
Since $(F^\ast\circ(aF))^\ast=F^\ast\circ(aF)$, we have $a_{-i}=a_i^{p^{-i}}$ for every $i\geq0$. 
Therefore, 
\[
R(x)=\sum_{i\geq1}a_i x^{p^i}+\frac{a_0}{2}x
\]
is an $\F_p$-linearized polynomial satisfying \eqref{r}. The uniqueness follows immediately from the construction. 

\noindent
(ii) Let $A\coloneqq \F_q[x^{p^{-\infty}}]$. By Lemma~\ref{lem: basic property of sim}(i), it suffices to show that 
\[aF(x)^2\sim_{A}2xR(x). \]
We compute 
\begin{align*}
aF(x)^2=F(x)\bigl(aF(x)\bigr)\sim_AxF^\ast\bigl(aF(x)\bigr)=x(R+R^\ast)(x). 
\end{align*}
Since $xR^\ast(x)\sim_A R(x)x$, we obtain 
\[
aF(x)^2\sim_A2xR(x). 
\]
The assertion follows. 
\end{proof}

We recall the characteristic-two analogue of the above lemma.  For an $\F_2$-algebra $A$, let $W_2(A)$ denote the ring of Witt vectors of length two. As a set, 
\[
W_2(A)=A\times A
\]
with addition and multiplication given by 
\[
(a,b)+(c,d)=(a+c,b+d+ac),\qquad (a,b)\cdot(c,d)=(ac,a^2d+c^2b). 
\]

Assume that $p_0=2$  and that $A$ is an $\F_p$-algebra. 
For $\alpha,\beta\in W_2(A)$, write 
\[
\alpha\sim\beta
\]
if there exists $(a,b)\in W_2(A)$ such that 
\[
\alpha-\beta=(a^p,b^p)-(a,b). 
\]
\begin{lemma}\label{lem: Fast F=R+Rast}
Assume that $p_0=2$. 
Let $F\in\mc R$ be an $\F_p$-linearized polynomial satisfying $F^\ast(1)=0$. Then the following hold. 
\begin{enumerate}
    \item There exists a unique $\F_p$-linearized polynomial $R(x)\in \F_q[x]$ of the form 
    \[
    R(x)=\sum_{i\geq1}a_ix^{p^i}\]
    such that 
\begin{equation*}
F^\ast \circ F=R+R^\ast. 
\end{equation*}
\item 
Let $R(x)$ be as in {\rm (i)}. Write 
\[
F(x)=\sum_{i=0}^eb_ix^{p^i}, 
\]
and define 
\[
\gamma_F\coloneqq\sum_{0\le i<j\le e}b_i^{p^{-i}}b_j^{p^{-j}}. 
\]
Then, in the ring $W_2(\F_q[x])$, we have 
\[
(F(x),0)\sim (0,x R(x)+\gamma_Fx^2). 
\]
\end{enumerate}
\end{lemma}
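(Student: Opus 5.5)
The plan is to mimic the proof of Lemma~\ref{lem: Fast aF=R+Rast}, with two changes: the role of $\frac{a_0}{2}$ is played by the hypothesis $F^\ast(1)=0$, and the congruence $\sim$ is computed inside $W_2$ instead of inside $A$.

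For (i), write $F(x)=\sum_{i=0}^e b_ix^{p^i}$ and $F^\ast\circ F=\sum_{k\in\Z}a_kx^{p^k}$.
\begin{enumerate}
\item Since $(F^\ast\circ F)^\ast=F\circ F^{\ast\ast}$ reversed, i.e.\ $(F^\ast\circ F)^\ast=F^\ast\circ F$, the coefficients satisfy $a_{-k}=a_k^{p^{-k}}$.
\item Compute the coefficient of $x$ directly. Using $F^\ast(y)=\sum_i(b_iy)^{p^{-i}}$, only the pairs with equal indices contribute, so
\[
a_0=\sum_i b_i^{2p^{-i}}=\Bigl(\sum_i b_i^{p^{-i}}\Bigr)^2=F^\ast(1)^2=0,
\]
because the characteristic is $2$.
\item Hence $R(x):=\sum_{k\ge1}a_kx^{p^k}$ satisfies $R+R^\ast=F^\ast\circ F$.
\item Uniqueness is immediate: $R+R^\ast$ determines all the coefficients $a_k$ with $k\geq1$, and $R$ has no $x$-term.
\end{enumerate}

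For (ii), the first step is a perfection-invariance statement for $W_2$, analogous to Lemma~\ref{lem: basic property of sim}(i). I would obtain it from Artin--Schreier--Witt theory: $W_2(A)/(F-1)W_2(A)$ computes an étale $H^1$ with $W_2(\F_p)$-coefficients, and this group is invariant under passing to the perfection. This lets me work in $A=\F_q[x^{p^{-\infty}}]$.

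In $W_2(A)$ the Frobenius is componentwise $p$-th power, so $(u,0)\sim(u^{p^{-1}},0)$ and $(0,z)\sim(0,z^{p^{\pm1}})$. Moreover $(0,z)=-(0,z)$ there.

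Set $u_i=b_ix^{p^i}$. Iterating $(a,0)+(c,0)=(a+c,ac)$ gives
\[
(F,0)=\sum_i(u_i,0)+\Bigl(0,\sum_{i<j}u_iu_j\Bigr).
\]
Each Teichmüller term satisfies $(u_i,0)\sim(b_i^{p^{-i}}x,0)$. Summing these with the same carry rule gives
\[
\Bigl(\sum_i b_i^{p^{-i}}x,\ \sum_{i<j}b_i^{p^{-i}}b_j^{p^{-j}}x^2\Bigr)=(F^\ast(1)x,\gamma_Fx^2)=(0,\gamma_Fx^2).
\]
It therefore remains to prove
\[
\Bigl(0,\sum_{i<j}b_ib_jx^{p^i+p^j}\Bigr)\sim(0,xR(x)).
\]

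This last step is the main obstacle. Within the second component, each term may be replaced by any Frobenius twist of itself. So I would twist the term $b_ib_jx^{p^i}x^{p^j}$ by $p^{-i}$ to get $b_i^{p^{-i}}b_j^{p^{-i}}\,x\cdot x^{p^{j-i}}$. I would then compare with the coefficients of $F^\ast\circ F$. For $k=j-i\ge1$,
\[
a_k=\sum_i b_i^{p^{-i}}b_{i+k}^{p^{-i}},
\]
which is exactly the set of pairs $i<j$ with $j-i=k$. Summing the twisted terms over $i$ for each fixed $k$ therefore gives $a_kx\,x^{p^k}$, and summing over $k\ge1$ gives $xR(x)$.

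I expect the bookkeeping of this final step to be the delicate part. One must make sure that the diagonal pairs $i=j$ never enter, which is consistent with $a_0=0$ from (i). One must also check that no characteristic-$2$ sign or factor-of-$2$ issues arise in the second component, which is where the argument departs from the odd case, since no $\frac12$ is available.
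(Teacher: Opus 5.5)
Your proof is correct. Part (i) follows the paper's argument (self-adjointness of $F^\ast\circ F$ plus vanishing of the coefficient of $x$). Your identity $a_0=\sum_i b_i^{2p^{-i}}=F^\ast(1)^2$ is the accurate form of what the paper writes as $a_0=F^\ast(1)$.

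For part (ii) the paper gives no argument of its own and simply cites \cite[Lemma~4.6]{ITT1}. You instead give a self-contained computation in $W_2$ of the perfection:
\begin{itemize}
\item The carry rule and $-(0,z)=(0,z)$ give $(F,0)=\sum_i(b_ix^{p^i},0)+\bigl(0,\sum_{i<j}b_ib_jx^{p^i+p^j}\bigr)$.
\item Twisting the Teichm\"uller summands to $(b_i^{p^{-i}}x,0)$ and recombining yields $(F^\ast(1)x,\gamma_Fx^2)=(0,\gamma_Fx^2)$.
\item Twisting the cross terms by $p^{-i}$ yields $\sum_{k\ge1}a_kx^{p^k+1}=xR(x)$.
\end{itemize}
Each of these steps is valid.

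Your route shows exactly where $\gamma_F$ comes from (the carry of the twisted Teichm\"uller terms) and where $F^\ast(1)=0$ is used (to kill the first component). It also explains why no $\tfrac12$ is needed. The off-diagonal sum over $i<j$ is already ``half'' of the non-diagonal part of $x(F^\ast\circ F)(x)$, so it produces $xR(x)$ directly. This replaces the step $x(R+R^\ast)(x)\sim 2xR(x)$ in Lemma~\ref{lem: Fast aF=R+Rast}(ii).

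The price is a $W_2$-analogue of Lemma~\ref{lem: basic property of sim}(i). Your Artin--Schreier--Witt justification works, but it can also be done by hand. Let $A=\F_q[x]$, let $B$ be its perfection, and let $\varphi(c,d)=(c^p,d^p)$. Suppose $\theta\in W_2(B)$ satisfies $\delta:=\varphi(\theta)-\theta\in W_2(A)$, and choose $m$ with $\varphi^m(\theta)\in W_2(A)$. Iterating $\theta=\varphi(\theta)-\delta$ gives $\theta=\varphi^m(\theta)-\sum_{k=0}^{m-1}\varphi^k(\delta)\in W_2(A)$.
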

\begin{proof}
(i) Since $(F^\ast\circ F)^\ast=F^\ast\circ F$, we may write  
\[
F^\ast\circ F=\sum_ia_ix^{p^i}
\]
with $a_{-i}=a_i^{p^{-i}}$. Moreover, writing $F(x)=\sum_{i=0}^eb_ix^{p^i}$, we compute 
\[
a_0=\sum_{i=0}^e(b_i^2)^{p^{-i}}=F^\ast(1)=0. 
\]
    Therefore, $R(x):=\sum_{i\ge1}a_ix^{p^i}$ satisfies the required property. 

    \medskip 

    \noindent
    (ii) The assertion is proved in \cite[Lemma~4.6]{ITT1}. 
\end{proof}

\subsection{Artin--Schreier sheaves}
Fix  a prime number $\ell$ not dividing $p$, and set \[\F_p^{\vee}:=\Hom_{\mathbb{Z}}(\F_p,\Ql^{\times}).\]
For each $\psi \in \F_p^{\vee}$,  let $\mathcal{L}_{\psi}$ denote the 
$\Ql$-sheaf on $\mathbb{A}^1_{\F_q}=\Spec (\F_q[x])$ defined by the \'etale $\F_p$-torsor 
$y^p-y=x$ and $\psi$. 
For a morphism of $\F_q$-schemes  
$f \colon X \to \mathbb{A}^1_{\F_q}$, let $\mathcal{L}_{\psi}(f)$
denote the pullback of $\mathcal{L}_{\psi}$ along $f$. 

We will repeatedly use the following fact.
For an $\F_q$-scheme $X$ and any $f,g\in\Gamma(X,\mc O_X)$ satisfying
\[
f\sim_{\Gamma(X,\mc O_X)}g, 
\]
we have an isomorphism of $\Ql$-sheaves on $X$
\[
\mathcal{L}_{\psi}(f) \cong \mathcal{L}_{\psi}(g). 
\]

\begin{lemma}\label{fpush}
Let $F(x) \in \F_q[x]$ be a separable $\F_p$-linearized polynomial of degree $p^e$ and set 
$W \coloneqq \ker F^\ast$. Assume that $W \subset \F_q$. 
Regard $F$ as an $\F_q$-morphism $\A^1_{\F_q}\to \A^1_{\F_q},\ x \mapsto F(x)$. 
Then there is an isomorphism 
\[
\bigoplus_{v \in W} 
\mathcal{L}_{\psi}(vx) \xrightarrow{\cong}
F_{\ast} \Ql. 
\]
\end{lemma}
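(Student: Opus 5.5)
Since $F$ is separable with $F$ a finite étale $\F_p$-linear map $\A^1\to\A^1$, its kernel $\ker F$ is an $\F_p$-vector space of dimension $e$ in $\F$, and $F$ is a $\ker F$-torsor over $\A^1$: we have a cartesian square realizing $\A^1\xrightarrow{F}\A^1$ as an étale Galois covering with group $\ker F$ (acting by translation), defined over $\F_q$ as a finite étale morphism, though the group itself need not be $\F_q$-rational. The plan is to decompose $F_\ast\Ql$ along characters of the covering group, and then identify each isotypic piece with an Artin--Schreier sheaf $\mathcal L_\psi(vx)$ using the duality of Lemma~\ref{lem: basic property of sim}(ii).

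First, for each $v\in W=\ker F^\ast\subset\F_q$ I will construct a map $\mathcal L_\psi(vx)\to F_\ast\Ql$, or equivalently by adjunction a trivialization $F^\ast\mathcal L_\psi(vx)=\mathcal L_\psi(vF(x))\cong\Ql$ on the source $\A^1$. By Lemma~\ref{lem: basic property of sim}(ii) (with the first variable specialized to the constant $v$, and using (i) to pass from the perfection back to $\F_q[x]$), we have $vF(x)\sim F^\ast(v)x=0$ in $\F_q[x]$. Hence $\mathcal L_\psi(vF(x))\cong\mathcal L_\psi(0)=\Ql$, and adjunction gives a nonzero morphism $\mathcal L_\psi(vx)\to F_\ast\Ql$ defined over $\F_q$. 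Summing over $v$ gives the map $\bigoplus_{v\in W}\mathcal L_\psi(vx)\to F_\ast\Ql$.

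Second, show this is an isomorphism; it suffices to check after base change to $\F$. Both sides are lisse of rank $\#W=p^e=\deg F$ (using $\#\ker F^\ast=\#\ker F$, as in the counting in Lemma~\ref{exact}, since $W\subset\F_q$ and $F^\ast$ is separable of the same $p$-degree up to Frobenius twist). Each summand $\mathcal L_\psi(vx)$ is rank one and these are pairwise non-isomorphic over $\A^1_{\F}$ for distinct $v$ (for $\psi\neq1$; the difference $\mathcal L_\psi((v-v')x)$ is a nontrivial Artin--Schreier sheaf), so the sum of the maps is injective as soon as each map is nonzero, which holds since adjunction of an isomorphism is nonzero. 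Equal ranks then give bijectivity. If $\psi$ is trivial the statement should be read with a fixed nontrivial $\psi$; I'll assume $\psi\ne1$ as implicitly intended.

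The main obstacle is the injectivity/rank bookkeeping: making sure the morphisms from distinct $v$ land in distinct isotypic components. Concretely I would verify this by noting that $F_\ast\Ql$ on $\A^1_\F$ decomposes as $\bigoplus_{\chi}\mathcal L_\chi$ over characters $\chi$ of $\ker F$, and that the pairing $(v,t)\mapsto\psi(\Tr\text{-type pairing})$ identifies $W$ with the character group of $\ker F$; the injectivity of $v\mapsto\mathcal L_\psi(vx)$ on isomorphism classes already suffices and avoids computing this pairing explicitly.
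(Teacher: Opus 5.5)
Your proposal is correct and is essentially the paper's proof. It trivializes $F^{-1}\mathcal{L}_\psi(vx)\cong\mathcal{L}_\psi(vF(x))\cong\mathcal{L}_\psi(F^\ast(v)x)\cong\Ql$, passes to adjunction, gets injectivity from the pairwise non-isomorphic rank-one summands, and concludes by comparing ranks.

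Your remark that $\psi$ must be nontrivial correctly makes explicit a hypothesis the paper leaves implicit. For the rank count $\#W=p^e$, the clean justification is the one you gesture at last: $F^\ast(x)^{p^e}$ is a separable polynomial of degree $p^e$. The counting in Lemma~\ref{exact} only compares $\ker F^\ast\cap\F_q$ with $\ker F\cap\F_q$, so it does not give this on its own.
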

\begin{proof}
For every $v \in W=\ker F^\ast$, we have 
$vF(x) \sim_{\F_q[x]} F^\ast(v)x=0$. 
Hence, 
\[
F^{-1}\mathcal{L}_{\psi}(vx)
\cong \mathcal{L}_{\psi}(vF(x)) 
\cong \mathcal{L}_{\psi}(F^\ast(v)x) 
\cong \Ql. 
\]
By adjunction, this yields a nonzero morphism 
\[
\mathcal{L}_{\psi}(vx) \to F_{\ast}\Ql.
\]
Taking the direct sum over all $v \in W$, we obtain a morphism
\[
\bigoplus_{v \in W} \mathcal{L}_{\psi}(vx) \to F_{\ast} \Ql.
\]
We claim that this morphism is an isomorphism. 
To prove injectivity, we note that the summands 
$\mathcal{L}_{\psi}(vx)$ are pairwise non-isomorphic rank-one smooth 
$\Ql$-sheaves for distinct $v \in W$. Since every morphism $\mathcal{L}_{\psi}(vx) \to F_{\ast} \Ql$ is nonzero, the induced morphism is injective.

Finally, both sides are smooth $\Ql$-sheaves of rank $\#W$. Therefore,  the above injective morphism is necessarily an isomorphism.
\end{proof}

\section{Construction of van der Geer--van der Vlugt curves}
Let $p$ be a power of $p_0$, and $q$ be a power of $p$. Let $R(x)\in \F_q[x]$ be a nonzero $\F_p$-linearized polynomial. The van der Geer--van der Vlugt curve associated with $R(x)$ is the smooth affine curve over $\F_q$ defined by 
\[
C_R\colon y^p-y=xR(x). 
\]

We recall some basic geometric properties of the curves $C_{R}$. 
\begin{lemma}\label{lem: geometric properties of CRs}
   The curve $C_{R}$  is smooth and geometrically connected. Its smooth  compactification $\overline{C}_{R}$ has genus $(\deg R) \cdot (p-1)/2$. 
Moreover, the complement $\overline{C}_{R}\setminus C_{R}$ consists of a single $\F_q$-rational point.  
\end{lemma}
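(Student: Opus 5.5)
The plan is to treat the curve as a $\Z/p$-cover of the projective line. We first check the affine equation and its smoothness, then analyse the unique point at infinity, and finally read off the genus from Riemann--Hurwitz with Artin--Schreier conductors.

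\emph{Smoothness of the affine curve.} Put $f(x,y)=y^p-y-xR(x)$. Since $\partial f/\partial y=-1$ is nowhere zero, $C_R$ is smooth. It is also finite étale over $\A^1_{\F_q}$ via $x$, being an $\F_p$-torsor.

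\emph{Geometric connectedness.} Write $R(x)=\sum_{i=0}^e b_ix^{p^i}$ with $b_e\neq0$, so $\deg R=p^e$. Then $xR(x)$ has degree $p^e+1$, which is prime to $p$. It follows that $xR(x)$ is not of the form $h^p-h+c$ with $h\in\F[x]$ and $c\in\F$: if $h$ is nonconstant, then $h^p-h$ has degree divisible by $p$. By Artin--Schreier theory the torsor is therefore nontrivial over $\F(x)$. Since $p$ is prime, the function field $\F(x,y)$ has degree $p$ over $\F(x)$, so $C_R\otimes\F$ is irreducible.

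\emph{The point at infinity.} Over $\infty\in\mathbb P^1$, let $t=1/x$. The function $xR(x)$ has a pole of order $m=p^e+1$ at $\infty$ with $p\nmid m$. Standard Artin--Schreier theory then gives the following:
\begin{itemize}
\item $\infty$ is totally ramified, so a single point $P_\infty$ lies above it;
\item that point is $\F_q$-rational, since its residue field equals that of $\infty$;
\item the different exponent is $(m+1)(p-1)$.
\end{itemize}
Concretely, with $v$ the valuation at $P_\infty$, we have $p\,v(y)=-m$. Because $\gcd(m,p)=1$, a suitable monomial $x^\alpha y^\beta$ is a uniformizer.

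\emph{Genus.} Over $\A^1$ the cover is étale, so Riemann--Hurwitz gives
\[
2g-2=p(-2)+(m+1)(p-1)=-2p+(p^e+2)(p-1).
\]
Hence $2g=(p-1)p^e$, that is, $g=(\deg R)(p-1)/2$.

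The only step that needs care is the conductor computation at $\infty$. Since $m$ is prime to $p$, no reduction of $xR(x)$ modulo $\wp(\F(x))$ is needed, and the standard formula applies directly.
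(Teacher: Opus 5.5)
Your route (Jacobian criterion, analysis of the cover over $\infty$, Riemann--Hurwitz with the wild different) is the same as the paper's, which simply invokes \cite[Proposition~6.4.1]{St}. There is, however, a genuine gap. You assume $p$ is prime and treat $C_R\to\A^1$ as a $\Z/p$-cover. In this paper $p$ is only a power of the prime $p_0$, so the Galois group is $(\F_p,+)\cong(\Z/p_0\Z)^k$.

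For $k\ge2$ your connectedness step fails as written. Knowing $xR(x)\notin\wp(\F(x))$ only says the $\F_p$-torsor is nontrivial, and a nontrivial $\F_p$-torsor can be disconnected. For example, take $\omega\in\F_4\setminus\F_2$ and the torsor $y^4-y=\omega^2(x^2+x)$. It is nontrivial, but $z=\omega y+\omega^2y^2$ satisfies $z^2-z=x^2+x$. Hence $z\in\F(x)$, and $y$ has degree at most $2$ over $\F(x)$. What is actually needed is $\lambda\,xR(x)\notin\wp_{p_0}(\F(x))$ for all $\lambda\in\F_p^\times$, where $\wp_{p_0}(h)=h^{p_0}-h$.

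The ramification facts you quote from ``standard Artin--Schreier theory'' have the same problem. Total ramification and different exponent $(m+1)(p-1)$ are stated there for cyclic extensions of prime degree. For $p=p_0^k$ they remain true, but they need proof.

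The repair is short and uses the valuation relation you already wrote down.

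\emph{Irreducibility and ramification.} Let $y$ be a root of $T^p-T-xR(x)$, and let $v$ be any extension to $\F(x,y)$ of the valuation at $\infty$, normalized by $v(1/x)=1$. Then $v(y)=-m/p$ with $\gcd(m,p)=1$. So the ramification index is at least $p\ge[\F(x,y):\F(x)]$. This gives irreducibility of $T^p-T-xR(x)$ (hence geometric connectedness) and total ramification at once.

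\emph{The different.} Let $v_P=p\,v$ be the normalized valuation at the point $P$ over $\infty$. Take $\pi=t^\alpha y^\beta$ with $t=1/x$ and $\alpha p-\beta m=1$; then $p_0\nmid\beta$. For $\sigma\colon y\mapsto y+\mu$ with $\mu\in\F_p^\times$, one gets $v_P(\sigma\pi-\pi)=v_P(\beta\mu\pi/y)=m+1$. Hilbert's formula then gives $d=(p-1)(m+1)$.

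Alternatively, cite \cite[Proposition~6.4.1]{St} as the paper does; it treats $y^q-y=f(x)$ for any power $q$ of the characteristic with $\deg f$ prime to it. With either fix, your Riemann--Hurwitz computation goes through unchanged.
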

\begin{proof}
    Smoothness follows from the Jacobian criterion. The other properties follow readily from \cite[Proposition 6.4.1]{St}. 
\end{proof}

We also recall the following result. 
\begin{lemma}\label{lemma: When CR is extremal Fq Fp is even and VR in Fq}
Assume that $\overline{C}_R$ is $\F_q$-extremal. Then $[\F_q:\F_p]$ is even, and  
\begin{equation}\label{vr}
V_R:=\{x\in \F\mid R(x)+R^\ast(x)=0\}
\end{equation}
 is contained in $\F_q$.    
\end{lemma}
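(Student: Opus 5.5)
The plan is to compute the $L$-polynomial through Artin--Schreier sheaves and to read off both claims from the Frobenius eigenvalues. Using the fact that the curve and its compactification differ by one rational point (Lemma~\ref{lem: geometric properties of CRs}), we have
\[
H^1(\overline{C}_{R,\F},\Ql)\cong\bigoplus_{\psi\neq1}H^1_c(\A^1_{\F},\mc L_\psi(xR(x))).
\]
The sheaf $\mc L_\psi(xR(x))$ is smooth on $\A^1$ with Swan conductor $\deg(xR)=p\deg R$ at infinity, which is prime to $p$ after the usual reduction. Hence $H^1_c$ has dimension $p\deg R-1$, and $H^0_c=H^2_c=0$.

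The key structural input is the quadratic form
\[
Q(x)=\Tr_{q^m/p}\bigl(xR(x)\bigr)\quad\text{on }\F_{q^m}.
\]
Its associated bilinear form is $B(x,y)=\Tr(xR(y)+yR(x))=\Tr\bigl(x(R+R^\ast)(y)\bigr)$, using Lemma~\ref{lem: basic property of sim}(ii). By Lemma~\ref{exact}, the radical of $B$ on $\F_{q^m}$ is $V_R\cap\F_{q^m}$, where $V_R$ is as in \eqref{vr}. Since $p_0$ is odd, $Q$ restricted to the radical is $Q(v)=\tfrac12B(v,v)=0$. The standard Gauss-sum evaluation of a quadratic form with nondegenerate part of dimension $mn-d_m$, where $d_m=\dim_{\F_p}(V_R\cap\F_{q^m})$, then gives
\[
\Bigl|\sum_{x\in\F_{q^m}}\psi(Q(x))\Bigr|=p^{(mn+d_m)/2}.
\]

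Now suppose $\overline{C}_R$ is $\F_q$-extremal. Then every Frobenius eigenvalue on $H^1$ equals the same number $\epsilon\sqrt q$ with $\epsilon=\pm1$, and in particular $\sqrt q$ is an integer. We must show that $[\F_q:\F_p]$ is even and not merely that $q$ is a square. For each fixed $\psi\neq1$, the $\psi$-component has all eigenvalues equal to $\epsilon\sqrt q$. Since the Galois group of $\mathbb{Q}(\zeta_p)/\mathbb{Q}$ permutes the components transitively and fixes $\epsilon\sqrt q\in\Z$, the sum $\sum_x\psi(Q(x))$ is $-(\text{rank})\,\epsilon\sqrt q$, an integer. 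By the quadratic form evaluation, however, this sum equals $p^{(n+d_1)/2}$ times a root of unity. When $n+d_1$ is odd, that root of unity involves the quadratic Gauss sum $G_\psi$ (or a genuine $\sqrt{\pm p}$ factor), which is not rational. Rationality therefore forces the nondegenerate part to have even dimension with rational value.

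Comparing absolute values along the extensions $\F_{q^m}$ is the key step. Extremality over $\F_q$ implies that over $\F_{q^{2}}$ all eigenvalues are $q$, so the curve is $\F_{q^{2m}}$-maximal or minimal for every $m$ with the same pattern. Hence $|S_m|=q^{m}\cdot\#(\text{rank})/\ldots$; more cleanly, $\#\overline{C}_R(\F_{q^m})-q^m-1=\pm 2g\,q^{m/2}$. Expanding $\#C_R(\F_{q^m})=\sum_{\psi}\sum_x\psi(Q(x))$ and using the modulus formula gives
\[
(p-1)\,p^{(mn+d_m)/2}\asymp 2g\,q^{m/2}=(p-1)\deg R\cdot q^{m/2},
\]
with the exact equality $p^{d_m/2}=\deg R$ up to a controlled constant. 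Since $\deg(R+R^\ast)$ is about $(\deg R)^2$ after normalizing, $d_m$ is bounded by $\dim_{\F_p}V_R$, with equality exactly when $V_R\subset\F_{q^m}$. Taking $m=1$ and comparing with large $m$, where $d_m=\dim V_R$ for $m$ divisible enough, forces $d_1=\dim V_R$, i.e.\ $V_R\subset\F_q$. Evenness of $n$ then follows from the rationality argument above together with $d_1=\dim V_R$, whose parity is fixed by the $p$-power degree $p^{2e}$ of $R+R^\ast$ after composing with a power of $x^{p}$. This makes $n+d_1$ even only if $n$ is even, up to the quadratic sign $G_\psi^2=\left(\frac{-1}{p}\right)p$.

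The main obstacle is this last parity bookkeeping: pinning down $\dim V_R$ exactly relative to $\deg R$ so that the equality of absolute values at $m=1$ is tight, and then extracting the parity of $n$. I would first try to establish $\#V_R=(\deg R)^2$, using separability of $R+R^\ast$ after clearing negative powers via composition with $x^{p^{e}}$, so that the nondegenerate rank identity $p^{(n+d_1)/2}=\deg R\cdot p^{n/2}$ directly gives $d_1=2\log_p\deg R=\dim V_R$. Integrality of $\sqrt q$ then finishes the argument.
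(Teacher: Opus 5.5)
\textbf{Main gap: the evenness of $[\F_q:\F_p]$.} Your argument for $V_R\subset\F_q$ is essentially sound in odd characteristic, but the evenness of $n=[\F_q:\F_p]$ is not established. The rationality argument cannot separate ``$q$ is a square'' from ``$n$ is even'', which is exactly the distinction you said was needed. If $p=p_0^{2k}$, then:
\begin{itemize}
\item every $q=p^n$ is a square;
\item every element of $\F_{p_0}^\times$, in particular $-1$, is a square in $\F_p$;
\item $G_\psi=\pm p_0^{k}$ is rational.
\end{itemize}
So when $n-d_1$ is odd, the sums $S_\psi=\sum_{x\in\F_q}\psi(Q(x))$ are still rational integers, and nothing is contradicted. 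For instance, $p=9$, $q=9^3$, $R=ax$ gives $\sqrt q=27$ and $S_\psi=\pm 27$, although $n=3$. Relatedly, Galois conjugation only realizes the twists $\psi\mapsto\psi(c\,\cdot)$ with $c\in\F_{p_0}^\times$, so it is not transitive on $\F_p^\vee\setminus\{1\}$ when $p\neq p_0$. You do not need transitivity anyway, since all Frobenius eigenvalues coincide.

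The missing idea is to use all twists $\psi_c:=\psi(c\,\cdot)$ with $c\in\F_p^\times$. Diagonalizing the nondegenerate quotient of $Q$, of dimension $r=n-d_1$, gives $S_{\psi_c}=\eta(c)^{r}S_\psi$, where $\eta$ is the quadratic character of $\F_p^\times$. If $r$ were odd, then $\sum_{\psi\neq 1}S_\psi=0$, i.e.\ $\#C_R(\F_q)=q$. This is incompatible with extremality; equivalently, the $S_\psi$ could not all equal $-\deg R\cdot\lambda$, where $\lambda=\pm\sqrt q$ is the common eigenvalue. Hence $r$ is even, and together with $d_1=2e$ this gives $n$ even. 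Integrality of $\sqrt q$ alone only gives $[\F_q:\F_{p_0}]$ even.

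\textbf{Further corrections.} First, the rank of $H^1_c(\A^1,\mathcal{L}_\psi(xR(x)))$ is $\deg R$, not $p\deg R-1$. The Swan conductor at $\infty$ is $\deg(xR)=\deg R+1$, and only this matches $2g=(p-1)\deg R$. With the correct rank, extremality gives $|S_\psi|=\deg R\cdot\sqrt q$ at once. Compare this with $|S_\psi|=p^{(n+d_1)/2}$ and with $\dim_{\F_p}V_R=2e$. The latter holds because the coefficient of $x$ in $(R+R^\ast)^{p^e}$ is $a_e$ if $e\ge 1$ and $2a_0$ if $e=0$, so this polynomial is separable of degree $p^{2e}$. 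The comparison yields $d_1=2e$, i.e.\ $V_R\subset\F_q$, already over $\F_q$. The discussion of $\F_{q^m}$, ``$\asymp$'' and ``controlled constants'' can be dropped.

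Second, the lemma is stated for arbitrary $p_0$ and is used for $p_0=2$ in \S\ref{import} and Lemma~\ref{app1}. Your argument uses $p_0$ odd essentially, through the vanishing of $Q$ on the radical. For $p_0=2$, $Q$ restricted to the radical is additive and may be nonzero; in that case $S_\psi=0$, contradicting extremality. Evenness must then come from the nondegenerate alternating quotient being even-dimensional. For comparison, the paper does no computation here: it quotes \cite[Lemma~A.3]{ITT1} for $n$ even and $H_R\subset\F_q\times\F_q$, and then projects $H_R\to V_R$.
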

\begin{proof}
The assertion follows from \cite[Lemma~A.3]{ITT1}. Indeed, it is proved there that
$[\F_q:\F_p]$ is even and that
\[
H_R :=\{(a,b)\in V_R\times \F\mid b^p-b=aR(a)\}
\]
is contained in $\F_q\times \F_q$.
Since the projection
\[
H_R\to V_R,\qquad (a,b)\mapsto a,
\]
is surjective, it follows that $V_R\subset \F_q$.
\end{proof}

In this section, we give a recipe for constructing van der Geer--van der Vlugt curves that is suitable for computing their $L$-polynomials. The construction is divided according to the parity of $p_0$.

\subsection{The case where $p_0$ is odd}\label{ssec: The case where p0 is odd}
In this subsection, we assume that $p_0$ is odd. In this case, the construction starts from a pair 
\[\rho=(W,a),\]
where $W\subset \F_q$ is an $\F_p$-linear subspace and $a\in \F_q^\times$. 

Let 
\[
f_W(x)\coloneqq\prod_{v\in W}(x-v).  
\]
Define 
\begin{equation}\label{fwd}
F_W(x)\coloneqq f_W^\ast(x^{p^e}), 
\end{equation}
where $\dim_{\F_p}W=e$. 
It is straightforward to check that $F_W$ is a separable $\F_p$-linearized polynomial. 
Then
\[
F_W^\ast(x)=f_W(x)^{p^{-e}}.
\]
In particular,
\[
W=\ker F_W^\ast.
\]
By Lemma~\ref{lem: Fast aF=R+Rast}, there exists a unique $\F_p$-linearized polynomial $R_\rho(x) \in \F_q[x]$ 
such that 
\[R_\rho+R_\rho^\ast=F_W^\ast \circ (a F_W).\]
Let
\[
D_{\rho}\colon y^p-y=xR_\rho(x)
\]
be the curve associated with $R_\rho$. 
  
In the following theorem, we give a formula for the 
$L$-polynomial of the smooth compactification $\overline{D}_{\rho}$ of $D_\rho$. 
For a character $\psi \in \F_p^{\vee}$, let 
 $\psi_q:=\psi \circ \Tr_{q/p}$. 
 For a $p_0$-power $r$, let 
 \[
 \left(\frac{\cdot}{r}\right)
 \colon \F_r^{\times} \to \{\pm 1\},\quad x \mapsto x^{\frac{r-1}{2}}
 \]
 denote the Legendre symbol. 
\begin{theorem}\label{lp}
Set $n\coloneqq[\F_q:\F_p]$. 
For each $\psi\in \F_p^\vee\setminus\{1\}$, set 
\[
G_{\psi}:=-\sum_{x \in \F_p} \psi(x^2). 
\]
Then the $L$-polynomial  
\[
L(\overline{D}_{\rho}/\F_q, T):=\det(1-\mathrm{Fr}_q  T \mid 
H^1(\overline{D}_{\rho,\F},\Ql)) 
\]
satisfies  
\[
L(\overline{D}_{\rho}/\F_q, T)=\prod_{\substack{\psi\in\F_p^\vee\setminus\{1\} \\ v\in W}} (1-\tau_{\psi,v} T), 
\]
where 
\[
\tau_{\psi,v}:=\psi_q(-2^{-1} a^{-1} v^2) \cdot \left(\frac{2a}{q}\right) \cdot 
G_{\psi}^n. 
\]
\end{theorem}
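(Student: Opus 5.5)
We follow the standard route: decompose the cohomology of $\overline{D}_\rho$ by characters of the $\F_p$-action on $y$, and compute each piece via the factorization $R_\rho+R_\rho^\ast=F_W^\ast\circ(aF_W)$. Since $\overline{D}_\rho\setminus D_\rho$ is a single rational point (Lemma~\ref{lem: geometric properties of CRs}), we have
\[
H^1(\overline{D}_{\rho,\F},\Ql)\cong\bigoplus_{\psi\neq1}H^1_c(\A^1_\F,\mathcal{L}_\psi(xR_\rho(x))).
\]
By Lemma~\ref{lem: Fast aF=R+Rast}(ii), $2xR_\rho(x)\sim aF_W(x)^2$, so $\mathcal{L}_\psi(xR_\rho(x))\cong\mathcal{L}_\psi(2^{-1}aF_W(x)^2)$. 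Write $F=F_W$. Then $\mathcal{L}_\psi(2^{-1}aF(x)^2)=F^{-1}\mathcal{L}_\psi(2^{-1}az^2)$, and by the projection formula together with Lemma~\ref{fpush} (applicable since $W=\ker F^\ast\subset\F_q$ and $F$ is separable),
\[
H^i_c(\A^1_\F,\mathcal{L}_\psi(xR_\rho))\cong\bigoplus_{v\in W}H^i_c\bigl(\A^1_\F,\mathcal{L}_\psi(2^{-1}az^2+vz)\bigr)
\]
as $\mathrm{Fr}_q$-modules. Here we use that $F$ is finite étale, so $F_\ast=F_!$, and that each summand $\mathcal{L}_\psi(vz)$ is defined over $\F_q$.

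Next we complete the square: $2^{-1}az^2+vz=2^{-1}a(z+a^{-1}v)^2-2^{-1}a^{-1}v^2$. Translating $z$ by the rational element $a^{-1}v$, the $v$-summand becomes $\mathcal{L}_\psi(2^{-1}az^2)\otimes\psi_q(-2^{-1}a^{-1}v^2)$, where the second factor is a geometrically constant twist on which $\mathrm{Fr}_q$ acts by that scalar. It is standard that $H^i_c(\A^1,\mathcal{L}_\psi(cz^2))$ vanishes for $i\neq1$ and is one-dimensional for $i=1$. Its Frobenius eigenvalue is $-\sum_{z\in\F_q}\psi_q(cz^2)$ by the Lefschetz trace formula, which is minus the quadratic Gauss sum over $\F_q$ with $c=2^{-1}a$.

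Finally we evaluate this Gauss sum. Writing $\sum_{z}\psi_q(cz^2)=\left(\frac{c}{q}\right)\sum_z\psi_q(z^2)$ and applying the Hasse--Davenport relation gives $-\sum_{z\in\F_q}\psi_q(z^2)=G_\psi^n$, where $G_\psi=-\sum_{x\in\F_p}\psi(x^2)$. Since $\left(\frac{2^{-1}a}{q}\right)=\left(\frac{2a}{q}\right)$, each eigenvalue is $\tau_{\psi,v}$, and taking the product over $\psi\neq1$ and $v\in W$ gives the stated formula. The degree check works out: the genus is $\deg R_\rho\cdot(p-1)/2$ with $\deg R_\rho=p^{2e}/p^e\cdot$ (appropriately) $\#W$, consistent with $(p-1)\#W$ eigenvalues.

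The main obstacle we expect is bookkeeping the Frobenius action in the decomposition of Lemma~\ref{fpush}: we must check that the isomorphism is $\mathrm{Fr}_q$-equivariant, which holds because each map $\mathcal{L}_\psi(vx)\to F_\ast\Ql$ is defined over $\F_q$. We also need the signs in Hasse--Davenport for quadratic Gauss sums, where the normalization $G_\psi=-\sum\psi(x^2)$ is exactly what makes $-g_q=(-g_p)^n$ hold.
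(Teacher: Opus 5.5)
Your proposal is correct and follows the paper's proof essentially step by step: you decompose $H^1$ via $\pi_\ast\Ql$, use $2xR_\rho(x)\sim aF_W(x)^2$ to identify $\mathcal{L}_\psi(xR_\rho(x))$ with $F_W^{-1}\mathcal{L}_\psi(2^{-1}ax^2)$, apply the projection formula and Lemma~\ref{fpush}, then complete the square and evaluate the quadratic Gauss sum by Hasse--Davenport. The only blemish is your garbled ``degree check'' sentence; it is unnecessary, since the isomorphisms already produce $(p-1)\#W$ eigenvalues, and it should simply read $\deg R_\rho=p^e=\#W$.
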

\begin{proof}
By Lemma~\ref{lem: geometric properties of CRs}, the canonical morphism 
\[
H^1_{\mathrm{c}}(D_{\rho,\F},\Ql)\to H^1(\overline{D}_{\rho,\F},\Ql)
\]
is an isomorphism. Hence, it suffices to compute $\det(1-{\rm Fr}_q T\mid H^1_{\mathrm{c}}(D_{\rho,\F},\Ql))$.

Regard $F_W$ as an $\F_q$-morphism $\A^1_{\F_q}\to \A^1_{\F_q}$. 
By Lemmas \ref{lem: basic property of sim}(ii) and \ref{lem: Fast aF=R+Rast}(ii), we compute 
\begin{equation}\label{fr}
F_W^{-1}\mathcal{L}_\psi(2^{-1}a x^2)\cong 
\mathcal{L}_\psi(2^{-1} a F_W(x)^2)\cong 
\mathcal{L}_\psi(xR_\rho(x)). 
\end{equation}
Consider the $\F_q$-morphism 
\[\pi\colon D_{\rho}\to \A^1_{\F_q},\ (x,y)\mapsto x,\]
which is 
a finite \'etale Galois covering with Galois group $\F_p$. We have \[
\pi_{\ast} \Ql \cong 
\bigoplus_{\psi \in \F_p^{\vee}} \mathcal{L}_{\psi}(x R_\rho(x)). \]
Hence, we obtain 
\begin{align*}
H^1_{\mathrm{c}}(D_{\rho,\F},\Ql)
&\cong
H_{\rm c}^1(\mathbb{A}^1,\pi_{\ast} \Ql)\\
&\cong  \bigoplus_{\psi\in\F_p^\vee\setminus\{1\}}H^1_{\mathrm{c}}(\A^1,\mathcal{L}_\psi(xR_\rho(x)))\\
&\cong  \bigoplus_{\psi\in\F_p^\vee\setminus\{1\}}H^1_{\mathrm{c}}(\A^1,F_{W\ast} F_W^{-1} \mathcal{L}_\psi(2^{-1}ax^2)) \\
&\cong \bigoplus_{\psi\in\F_p^\vee\setminus\{1\}}H^1_{\mathrm{c}}(\A^1,\mathcal{L}_\psi(2^{-1} a x^2)\otimes F_{W\ast}\Ql)\\
& \cong \bigoplus_{\psi\in\F_p^\vee\setminus\{1\}}\bigoplus_{v\in W}H^1_{\mathrm{c}}(\A^1,\mathcal{L}_\psi(2^{-1} a x^2+vx)), 
\end{align*}
where the second isomorphism follows from 
 $H_{\rm c}^1(\mathbb{A}^1,\Ql)=0$, 
the third one follows from \eqref{fr},  
the fourth one follows from
the projection formula, and 
the last one follows from Lemma \ref{fpush} for $F=F_W$.

By the Grothendieck--Lefschetz trace formula, for every nontrivial $\psi$, we have 
\[
 \Tr(\mathrm{Fr}_q \mid H^1_{\mathrm{c}}(\A^1,\mathcal{L}_\psi(2^{-1}ax^2+vx)))=-\sum_{x\in\F_q}\psi_q(2^{-1}ax^2+vx). 
\]
Completing the square, we obtain 
\begin{align*}
 -\sum_{x\in\F_q}\psi_q(2^{-1}ax^2+vx)&=-\sum_{x\in\F_q}\psi_q(2^{-1}a(x+a^{-1}v)^2-2^{-1}a^{-1}v^2)\\
&=\psi_q(-2^{-1}a^{-1}v^2) \cdot \left(\frac{2^{-1}a}{q}\right) \cdot \biggl(\,-\sum_{x \in \F_q} \psi_q(x^2)\biggr) \\
&=\psi_q(-2^{-1}a^{-1}v^2) \cdot \left(\frac{2a}{q}\right)G_{\psi}^n=\tau_{\psi,v},  
\end{align*}
where the third equality follows from 
the Hasse--Davenport theorem. 
Since the cohomology group
\[
H^1_{\mathrm{c}}(\A^1,\mathcal{L}_\psi(2^{-1}ax^2+vx))\] has dimension $1$, we obtain 
\[
\det(1-{\rm Fr}_q T\mid H^1_{\mathrm{c}}(D_{\rho,\F},\Ql))=\prod_{\substack{\psi\in\F_p^\vee\setminus\{1\} \\ v\in W}} (1-\tau_{\psi,v} T). 
\]
This completes the proof.  
\end{proof}

The exponential sum associated with $x R_\rho(x)$ 
can be expressed via partial quadratic sums. 
\begin{corollary}\label{expsum}
Let $n=[\F_q:\F_p]$. 
For $\psi \in \F_p^{\vee} \setminus \{1\}$, we have 
\[
\sum_{x \in \F_q} \psi_q(x R_\rho(x))=-\left(\frac{2a}{q}\right) 
G_{\psi}^n
\sum_{v\in W}\psi_q(-2^{-1}a^{-1}v^2). 
\]
\end{corollary}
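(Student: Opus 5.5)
The plan is to rerun the sheaf-theoretic computation from the proof of Theorem~\ref{lp}, but at the level of global sections over $\F_q$-points rather than cohomology; equivalently, we take traces of Frobenius through the chain of isomorphisms established there. By the Grothendieck--Lefschetz trace formula, $\sum_{x\in\F_q}\psi_q(xR_\rho(x))$ equals $\Tr(\mathrm{Fr}_q\mid H^2_{\rm c})-\Tr(\mathrm{Fr}_q\mid H^1_{\rm c})$ for the sheaf $\mathcal L_\psi(xR_\rho(x))$ on $\A^1$.

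First we check that $H^2_{\rm c}(\A^1,\mathcal L_\psi(xR_\rho(x)))=0$ for nontrivial $\psi$. Using the isomorphisms from the proof of Theorem~\ref{lp}, this group decomposes as the direct sum of $H^2_{\rm c}(\A^1,\mathcal L_\psi(2^{-1}ax^2+vx))$. Each of these vanishes because the sheaf is a nontrivial (geometrically nonconstant, since $a\ne0$) Artin--Schreier sheaf. Alternatively, one can note that $H^0$ of its dual vanishes.

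It follows that
\[
\sum_{x\in\F_q}\psi_q(xR_\rho(x))=-\Tr\bigl(\mathrm{Fr}_q\mid H^1_{\rm c}(\A^1,\mathcal L_\psi(xR_\rho(x)))\bigr)=-\sum_{v\in W}\tau_{\psi,v}.
\]
Here the last equality uses the Frobenius-equivariant decomposition into one-dimensional pieces with eigenvalues $\tau_{\psi,v}$, exactly as computed in the proof of Theorem~\ref{lp}. Substituting the formula for $\tau_{\psi,v}$ gives the claim.

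A more elementary alternative also works. By \eqref{fr}, $\psi_q(xR_\rho(x))=\psi_q(2^{-1}aF_W(x)^2)$. Since $\ker F_W\cap\F_q$ and $W=\ker F_W^\ast$ play dual roles by Lemma~\ref{exact}, we count fibres of $F_W$ on $\F_q$ and then use Fourier inversion. Concretely, $\#F_W^{-1}(y)\cap\F_q=\sum_{v\in W}\psi_q(vy)$ for any fixed nontrivial $\psi$; this follows from the sum over characters in Lemma~\ref{exact}, with the image of $F_W$ on $\F_q$ being the annihilator of $W$. This reduces the sum to $\sum_{v\in W}\sum_{y}\psi_q(2^{-1}ay^2+vy)$, which we then evaluate by completing the square as in the proof of Theorem~\ref{lp}.

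The only delicate point is the fibre-count identity. It needs $W\subset\F_q$, which holds by construction, and it requires checking that $F_W(\F_q)$ is exactly the annihilator of $W$ under the trace pairing. That follows from Lemma~\ref{exact} applied to $f=F_W^\ast$, whose adjoint is $F_W$. I expect the trace-formula route to be the cleaner of the two.
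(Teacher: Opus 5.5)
Your trace-formula argument is correct and is exactly what the paper's one-line proof (``follows from Theorem~\ref{lp}'') amounts to: the sum is minus the trace of $\mathrm{Fr}_q$ on $H^1_{\rm c}(\A^1,\mathcal L_\psi(xR_\rho(x)))$, and the proof of Theorem~\ref{lp} splits this Frobenius-equivariantly into lines with eigenvalues $\tau_{\psi,v}$. Your elementary alternative is also valid: Lemma~\ref{exact} applied to $F_W^\ast$ gives both $F_W(\F_q)=W^\perp$ and $\#(\ker F_W\cap\F_q)=\#W$, so that route is the point-counting counterpart of Lemma~\ref{fpush}.
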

\begin{proof}
The claim follows from Theorem \ref{lp}. 
\end{proof}

\begin{theorem}\label{t1}
Assume that $[\F_q:\F_p]$ is even. Then the following hold.  
\begin{itemize}
\item[{\rm (i)}]  Let $\rho=(W,a)$ be a pair as above. 
Then the curve $\overline{D}_\rho$ is $\F_q$-extremal if and only if 
\[\Tr_{q/p}(a^{-1}v^2)=0 \qquad\text{for all }v\in W.\]
Moreover, if these conditions are satisfied, then $\overline{D}_\rho$ is $\F_q$-maximal (resp. $\F_q$-minimal) if and only if 
    \[
    \left(\frac{a}{q}\right)\cdot \left(\frac{-1}{\sqrt{q}}\right)=-1 \qquad\text{(resp. }\left(\frac{a}{q}\right)\cdot \left(\frac{-1}{\sqrt{q}}\right)=1). 
    \]
\item[{\rm (ii)}] 

Let $R(x)\in\F_q[x]$ be an $\F_p$-linearized polynomial. Assume that  $\overline{C}_R$ is 
$\F_q$-extremal. Then there exists a pair $\rho=(W,a)$ satisfying 
\[R=R_\rho. \]
Consequently, we have $C_{R}=D_\rho$. 
\end{itemize}
\end{theorem}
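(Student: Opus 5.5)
For part (i), I would start from the formula of Theorem~\ref{lp} and simplify the Gauss sum factor using that $n=[\F_q:\F_p]$ is even. Classically $G_\psi^2=\left(\frac{-1}{p}\right)p$ for every nontrivial $\psi$, so
\[
G_\psi^n=\left(\frac{-1}{p}\right)^{n/2}p^{n/2}=\left(\frac{-1}{\sqrt q}\right)\sqrt q .
\]
Since $n$ is even, $2\in\F_p^\times\subset(\F_q^\times)^2$, and hence $\left(\frac{2}{q}\right)=1$. Therefore
\[
\tau_{\psi,v}=\psi_q(-2^{-1}a^{-1}v^2)\left(\frac{a}{q}\right)\left(\frac{-1}{\sqrt q}\right)\sqrt q .
\]
The curve $\overline{D}_\rho$ is $\F_q$-maximal (resp.\ minimal) iff every Frobenius eigenvalue equals $-\sqrt q$ (resp.\ $+\sqrt q$). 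This holds iff $\psi(\Tr_{q/p}(-2^{-1}a^{-1}v^2))=\pm1$ with a common sign independent of $\psi\neq1$ and $v\in W$. The values $\psi(t)$ for $t\in\F_p^\times$ range over nontrivial $p$-th roots of unity (not $\pm1$ when $p\ge3$), and $v=0$ already forces the sign $+1$. So extremality is equivalent to $\Tr_{q/p}(a^{-1}v^2)=0$ for all $v\in W$, using that $-2^{-1}\in\F_p^\times$ can be pulled out of the trace. In that case every $\tau_{\psi,v}$ equals $\left(\frac{a}{q}\right)\left(\frac{-1}{\sqrt q}\right)\sqrt q$, which gives the maximal/minimal dichotomy. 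The degenerate case $W=0$ (genus $0$) is consistent with this.

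For part (ii), set $E:=R+R^\ast$, which is self-adjoint, and let $e$ be such that $\deg R=p^e$. Then $E^{(p^e)}:=E(x)^{p^e}$ is a separable linearized polynomial, with the leading coefficients of $R$ and $R^\ast$ contributing to distinct extreme degrees. Hence $V_R=\ker E$ is an $\F_p$-space of dimension $2e$, and by Lemma~\ref{lemma: When CR is extremal Fq Fp is even and VR in Fq} it lies in $\F_q$. The plan is to find an $e$-dimensional subspace $U\subset V_R$ and to set $F(x):=\prod_{u\in U}(x-u)$. Then I would show that $E=F^\ast\circ (aF)$ for some $a\in\F_q^\times$. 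Given that, put $W:=\ker F^\ast\subset\F_q$ (Lemma~\ref{exact} gives $\#(\ker F^\ast\cap\F_q)=\#U$). Since $F^\ast$ is determined by its kernel up to a left scalar, $F$ equals $c^\ast\cdot F_W$ for a scalar $c$, after absorbing the scalar into $a$. The uniqueness in Lemma~\ref{lem: Fast aF=R+Rast}(i) then yields $R=R_\rho$ with $\rho=(W,a)$.

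To produce the factorization, I would use right division in the twisted ring $\mc R$. Since $U=\ker F\subset\ker E$, we can write $E=G\circ F$. Taking adjoints gives $E=F^\ast\circ G^\ast$. The goal is then to show $G^\ast=H\circ F$, with $H$ of $p$-degree zero by degree count, so $H=a$ is a scalar. This requires $\ker F\subset\ker G^\ast$, which is where the choice of $U$ enters. The relevant structure is the symmetric $\F_p$-bilinear pairing $V_R\times V_R\to\F_p$, $(x,y)\mapsto\Tr_{q/p}(xR(y)+yR(x))$, or better the Heisenberg group $H_R$ from Lemma~\ref{lemma: When CR is extremal Fq Fp is even and VR in Fq}, together with the quadratic form $x\mapsto xR(x)$ on $V_R$. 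I expect that extremality, via the fact that Frobenius acts on $H^1$ by a scalar, forces this form to be split (hyperbolic). A maximal isotropic subspace $U$ of dimension $e$ should then exist, and isotropy of $U$ should be exactly the condition $\ker F\subset\ker G^\ast$.

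The main obstacle is this last step: translating the extremality condition (equivalently $H_R\subset\F_q^2$ and the scalar Frobenius action) into the existence of an $e$-dimensional $U$ with $E\circ$-compatible isotropy, and verifying the kernel inclusion $\ker F\subset\ker G^\ast$. I would first try the direct computation on $H_R$: for $u,u'\in U$, compare the commutator pairing in $H_R$ with the residues of $G^\ast$ on $\ker F$.
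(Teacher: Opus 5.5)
Part (i) is correct and is the paper's argument: simplify $\left(\frac{2a}{q}\right)$ and $G_\psi^n$, then use $v=0$ to fix the common sign. Part (ii) has a genuine gap, at exactly the step you flag. You never establish $R+R^\ast=F^\ast\circ(aF)$ over $\F_q$, which is the whole content of (ii), and the mechanism you propose targets the wrong form. The symmetric form $(x,y)\mapsto\Tr_{q/p}(xR(y)+yR(x))$ is identically zero on $V_R$. By Lemma~\ref{lem: basic property of sim}(ii) it equals $\Tr_{q/p}(y\,E(x))$, which vanishes for $x\in V_R$. Likewise $2\Tr_{q/p}(xR(x))=\Tr_{q/p}(xE(x))=0$ on $V_R$. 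So isotropy for this form is automatic, and it cannot be equivalent to $\ker F\subset\ker G^\ast$, which fails for suitable $e$-dimensional $U$ once $e\ge 2$. There is also no splitness question for extremality to decide. The relevant structure is the \emph{alternating} commutator pairing $\omega_R$ of $H_R$, which is nondegenerate on $V_R$; this is the canonical symplectic pairing of \cite[Lemma~2.6]{ITT1} used in the paper. Every nondegenerate alternating form has $e$-dimensional Lagrangians, so extremality plays no role in producing $U$. It enters only through Lemma~\ref{lemma: When CR is extremal Fq Fp is even and VR in Fq}, giving $V_R\subset\F_q$, so that $U$, $F$, $a$ and $W$ are $\F_q$-rational.

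Your last sentence, comparing the commutator pairing with $G^\ast$ on $\ker F$, is the right idea, but the identity that closes the division argument is missing. If $E=G\circ F$ with $\ker F=U\subset V_R$, then $F^\ast\circ G^\ast=E$ gives $G^\ast(U)\subset\ker F^\ast$. Compose the relations $yF^\ast(z)\sim F(y)z$ and $wG^\ast(x)\sim G(w)x$ (with biadditive witnesses) and evaluate at $u,u'\in U$; the $G^\ast$-term drops out because $F(u')=0$. This shows that $\langle u',G^\ast(u)\rangle\in\F_p$, for the perfect pairing $\ker F\times\ker F^\ast\to\F_p$, equals $\omega_R(u',u)$ up to sign. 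Hence $G^\ast|_U=0$, i.e.\ $G^\ast=aF$ with $a\in\F_q^\times$, if and only if $U$ is $\omega_R$-Lagrangian. This is what \cite[Corollary~2.10]{ITT1} supplies and what the paper invokes.

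There is also a smaller slip. Take $F=\prod_{u\in U}(x-u)$ monic, $W:=\ker F^\ast$, and $b_0$ the coefficient of $x$ in $F$. Then $F^\ast=b_0\cdot F_W^\ast$, so $F=F_W\circ b_0$ with the scalar on the \emph{right}, and it cannot be absorbed into $a$. Instead, rescale $F$ on the left so that $b_0=1$, as the paper does. This replaces $W$ by $b_0W$ and $a$ by $ab_0^2$; then $F_W=F$, and the uniqueness in Lemma~\ref{lem: Fast aF=R+Rast}(i) gives $R=R_\rho$.
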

\begin{proof}
(i) Since $n=[\F_q:\F_p]$ is even, we have $\displaystyle \left(\frac{2a}{q}\right) = \left(\frac{a}{q}\right)$ and 
  \[
  G_{\psi}^n=\left\{\left(\frac{-1}{p}\right)p \right\}^{n/2}=
  \left(\frac{-1}{\sqrt{q}}\right)\sqrt{q}. 
  \]
  Hence we obtain 
  \[
\tau_{\psi,v}=\psi_q(-2^{-1}a^{-1}v^2) \cdot \left(\frac{a}{q}\right) \cdot 
\left(\frac{-1}{\sqrt{q}}\right)\sqrt{q}.
  \]
  Therefore, the curve $\overline{D}_{\rho}$ is $\F_q$-extremal if and only if 
  \[
  \psi(\Tr_{q/p}(-2^{-1}a^{-1}v^2))=
  \psi_q(-2^{-1}a^{-1}v^2)=1 \quad \textrm{for all $\psi \in \F_p^{\vee} \setminus \{1\}$ and 
  all $v \in W$}. 
  \]
  This condition is equivalent to 
  \begin{equation*}\label{equation: condition for ex}
      \Tr_{q/p}(a^{-1}v^2)=0 \quad \textrm{for all $v \in W$}. 
  \end{equation*}

Assume now that these equivalent conditions are satisfied. Then 
\[
\tau_{\psi,v}=\left(\frac{a}{q}\right) \cdot 
\left(\frac{-1}{\sqrt{q}}\right)\sqrt{q}.
  \]
  This completes the proof.

  \medskip 

  \noindent
  (ii) Let $R(x)\in\F_q[x]$ be an $\F_p$-linearized polynomial of degree $p^e$.  Define 
  \[E:=R+R^\ast\qquad\text{and}\qquad V:=
  \ker E=\{x \in\F \mid E(x)=0\}.\]
  Assume that $\overline{C}_{R}$ is $\F_q$-extremal. Then, 
  by Lemma~\ref{lemma: When CR is extremal Fq Fp is even and VR in Fq}, we have 
 $V\subset \F_q$. By \cite[Lemma~2.6]{ITT1}, $V$ carries a canonical nondegenerate symplectic pairing. 
  Then, by choosing a maximal totally isotropic subspace of $V$ 
  and applying \cite[Corollary 2.10]{ITT1}, one can write 
  \[
  E=F^\ast \circ (aF)
  \]
  for some $a\in \F_q^\times$ and $F(x)=\sum_{i=0}^eb_ix^{p^i}\in \F_q[x]$ with $b_e\neq0$ and $b_0=1$.

  Let $W:=\ker F^\ast$. 
  Since $aF$ defines a surjection 
  $V \to W$ and $V\subset \F_q$, it follows that $W \subset \F_q$. 

Set $\rho:=(W,a)$. 
Since $b_e\neq0$ and $b_0=1$, the polynomial $F^\ast(x)^{p^e}$ is monic and separable. 
Hence 
\[
f_W(x)=\prod_{v\in W}(x-v)=F^\ast(x)^{p^e}. 
\]
Taking the adjoint gives $f_W^\ast(x)=F(x^{p^{-e}})$. 
Substituting $x^{p^e}$ for $x$ yields 
\[
F_W(x)=f_W^\ast(x^{p^e})=F(x).
\]
Hence 
\[
R_{\rho}+R_{\rho}^\ast=F_W^{\ast}\circ (a F_W)=F^\ast \circ (aF)=
R+R^\ast. 
\]
  This shows that $R=R_\rho$ by the uniqueness in Lemma \ref{lem: Fast aF=R+Rast}(i). The assertion follows. 
 \end{proof} 
The following gives a simple recipe for constructing $\F_q$-maximal curves. See \cite[Corollary 6.7]{ITT1} for the characteristic-two analogue.
\begin{corollary}\label{maxc}
Assume that $[\F_q:\F_p]$ is even. 
Let $\xi \in \F_q^{\times}$ be such that $\xi^{\sqrt{q}-1}=-1$.
Let $\eta \in \F_q^{\times}$ and set 
$a:=\xi^{-1} \eta^2$. 
Let $W$ be an $\F_p$-vector subspace of $\eta\, \F_{\sqrt{q}}$ and let $\rho \coloneqq (W,a)$. 
Then $\overline{D}_{\rho}$ is $\F_q$-maximal. 
\end{corollary}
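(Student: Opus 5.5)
We apply Theorem~\ref{t1}(i) to $\rho=(W,a)$, so there are two things to check: the trace condition $\Tr_{q/p}(a^{-1}v^2)=0$ for all $v\in W$, and the sign condition $\left(\frac{a}{q}\right)\left(\frac{-1}{\sqrt{q}}\right)=-1$.

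\textbf{The trace condition.} Write $v=\eta t$ with $t\in\F_{\sqrt q}$. Then
\[
a^{-1}v^2=\xi\eta^{-2}\eta^2t^2=\xi t^2 .
\]
Since $\Tr_{q/p}=\Tr_{\sqrt q/p}\circ\Tr_{q/\sqrt q}$, it is enough to show $\Tr_{q/\sqrt q}(\xi t^2)=0$. Because $t^2\in\F_{\sqrt q}$, we have
\[
\Tr_{q/\sqrt q}(\xi t^2)=t^2(\xi+\xi^{\sqrt q}) .
\]
The hypothesis $\xi^{\sqrt q-1}=-1$ gives $\xi^{\sqrt q}=-\xi$, so this trace vanishes.

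\textbf{The sign condition.} The factor $\eta^2$ is a square, so $\left(\frac{a}{q}\right)=\left(\frac{\xi}{q}\right)=\xi^{(q-1)/2}$. Write $q-1=(\sqrt q-1)(\sqrt q+1)$. Then
\[
\xi^{(q-1)/2}=\bigl(\xi^{\sqrt q-1}\bigr)^{(\sqrt q+1)/2}=(-1)^{(\sqrt q+1)/2}.
\]
On the other side, $\left(\frac{-1}{\sqrt q}\right)=(-1)^{(\sqrt q-1)/2}$. Multiplying,
\[
\left(\frac{a}{q}\right)\left(\frac{-1}{\sqrt{q}}\right)=(-1)^{\sqrt q}=-1,
\]
because $\sqrt q$ is odd.

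By Theorem~\ref{t1}(i), these two facts together show that $\overline{D}_\rho$ is $\F_q$-maximal.

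\textbf{Genus.} The introduction also states the genus. Since $\deg R_\rho=\#W$, Lemma~\ref{lem: geometric properties of CRs} gives genus $(\#W)(p-1)/2$. Equivalently, this is the number of factors in the $L$-polynomial of Theorem~\ref{lp}, namely $(p-1)\#W$, divided by $2$.

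\textbf{Expected obstacle.} Everything here is routine. The one place that needs care is the parity bookkeeping in the sign condition: the $(-1)$-powers must combine to the odd exponent $\sqrt q$, which yields maximality rather than minimality.
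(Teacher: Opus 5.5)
Your proof is correct and follows the paper's argument essentially verbatim. You check the trace condition via $\Tr_{q/\sqrt{q}}(\xi t^2)=t^2(\xi+\xi^{\sqrt{q}})=0$ and the sign condition via $\xi^{(q-1)/2}=(\xi^{\sqrt{q}-1})^{(\sqrt{q}+1)/2}$, then invoke Theorem~\ref{t1}(i); your genus remark via $\deg R_\rho=\#W$ is also accurate.
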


\begin{proof}
Since $a=\xi^{-1} \eta^2$ and $\xi^{\sqrt{q}-1}=-1$, we have
\[
\left(\frac{a}{q}\right)=\left(\frac{\xi}{q}\right)
=\xi^{(q-1)/2}
=(\xi^{\sqrt{q}-1})^{(\sqrt{q}+1)/2}
=(-1)^{(\sqrt{q}+1)/2}
=-\left(\frac{-1}{\sqrt{q}}\right).
\]

Note that $\Tr_{q/\sqrt{q}}(\xi)=\xi^{\sqrt{q}}+\xi=0$. 
For $v=\eta x \in \eta\, \F_{\sqrt{q}}$, using $a=\xi^{-1} \eta^2$, we obtain
\[
\Tr_{q/\sqrt{q}}(a^{-1}v^2)
=\Tr_{q/\sqrt{q}}(\xi x^2)
=x^2 \Tr_{q/\sqrt{q}}(\xi)
=0.
\]
Hence $\Tr_{q/p}(v^2/a)=0$ for 
$v \in \eta\, \F_{\sqrt{q}}$. 
In particular, since $W \subset \eta\, \F_{\sqrt{q}}$, we have
\[
\Tr_{q/p}(a^{-1}v^2)=0 
\quad \text{for all } v \in W.
\]
Therefore the claim follows from Theorem~\ref{t1}(i).
\end{proof}

\subsection{The case where $p_0=2$}
The construction in this case is given in \cite{ITT1}. Let us review the result. 

We consider a pair 
\[
\epsilon=(W,t)
\]
where $W\subset \F_q$ is an $\F_p$-linear subspace containing $1$, and $t\in\F_q$. 

Define $f_W(x)$ and $F_W(x)$ as in \S\ref{ssec: The case where p0 is odd}: 
\[
f_W(x)\coloneqq\prod_{v\in W}(x-v),\qquad 
F_W(x)\coloneqq f_W^\ast(x^{p^e}), 
\]
where $\dim_{\F_p}W=e$. Write 
\[
F_W(x)=\sum_{i=0}^eb_ix^{p^i}, 
\]
and define 
\[
\gamma_{F_W}\coloneqq\sum_{0\le i<j\le e}b_i^{p^{-i}}b_j^{p^{-j}}. 
\]

Write 
\[
F_W^\ast\circ F_W=\sum_{i=-e}^ea_ix^{p^i}\qquad(a_i\in\F_q). 
\]
Since $(F_W^\ast\circ F_W)^\ast=F_W^\ast\circ F_W$, we have $a_{-i}=a_i^{p^{-i}}$. Moreover, by \cite[Corollary~2.10]{ITT1}, we have $a_0=F_W^\ast(1)$, which vanishes since $1\in W$. 

Therefore, the polynomial 
$R(x)=\sum_{i=1}^ea_ix^{p^i}$ satisfies 
\[
R+R^\ast=F_W^\ast\circ F_W. 
\]
Define 
\[
R_\epsilon(x)\coloneqq R(x)+\bigl(\gamma_{F_W}+F_W^\ast(t)^2\bigr)x. 
\]
This also satisfies $R_\epsilon+R_\epsilon^\ast=F_W^\ast\circ F_W$. 

Let 
\[
D_\epsilon: y^p+y=xR_{\epsilon}(x) 
\]
be the associated curve. 
The following theorem is a characteristic-two analogue of Theorem~\ref{lp}. 

We fix a faithful character $\xi_2 \colon W_2(\F_2) \to \Ql^{\times}$ and set $\sqrt{-1}\coloneqq \xi_2(1,0)$. 
We define a composite 
\[
\xi_q \colon W_2(\F_q) \xrightarrow{\Tr_{q/2}} 
W_2(\F_2) \xrightarrow{\xi_2} \Ql^{\times}, 
\]
where $\Tr_{q/2}$ is the trace map defined by 
\[
(x,y) \mapsto \sum_{i=0}^{[\F_q:\F_2]-1}(x^{2^i}, y^{2^i}).
\]
We set 
\[
Q_q(x):=\xi_q(x,0), \qquad \psi_q(x):=\xi_q(0,x) \qquad \textrm{for $x \in \F_q$}. 
\]
\begin{theorem}[{\cite[Theorem~4.12]{ITT1}}]\label{twist}
Let $\overline{D}_\epsilon$ denote the smooth compactification of $D_\epsilon$. Then the $L$-polynomial 
\[
L(\overline{D}_{\epsilon}/\F_q, T):=\det(1-\mathrm{Fr}_q T \mid 
H^1(\overline{D}_{\epsilon,\F},\Ql)) 
\]
satisfies 
\[
L(\overline{D}_{\epsilon}/\F_q, T)=
 \prod_{v \in W}
(1-\tau_{v} T)^{p-1},
\]
where 
\[
\tau_{v}:=Q_q(t+v)^{-1} (-1-\sqrt{-1})^{[\F_q:\F_2]}. 
\]
\end{theorem}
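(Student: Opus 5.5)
The plan is to mirror the proof of Theorem~\ref{lp}, replacing the quadratic Artin--Schreier sheaf with a Witt-vector sheaf of length two. Let $\mathcal{L}_{\xi}(\cdot)$ be the rank-one $\Ql$-sheaf on $\A^1_{\F_q}$ attached to the \'etale $W_2(\F_2)$-torsor $\mathrm{F}(w)-w=(x,0)$ via $\xi_q$. Its restriction along the subgroup $(0,\F_p)$ recovers $\mathcal{L}_\psi(\cdot)$, and its Frobenius trace at $x\in\F_q$ is $Q_q(x)$. By Lemma~\ref{lem: geometric properties of CRs}, it suffices to compute $\det(1-\mathrm{Fr}_qT\mid H^1_{\mathrm c}(D_{\epsilon,\F},\Ql))$. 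Pushing forward along $\pi\colon D_\epsilon\to\A^1$, $(x,y)\mapsto x$, decomposes this cohomology as $\bigoplus_{\psi\neq1}H^1_{\mathrm c}(\A^1,\mathcal{L}_\psi(xR_\epsilon(x)))$.

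The key step is the characteristic-two analogue of \eqref{fr}. I would show that
\[
F_W^{-1}\mathcal{L}_{\xi}\bigl((x+t,0)\bigr)\otimes(\text{constant})\cong\mathcal{L}_\psi(xR_\epsilon(x)).
\]
Lemma~\ref{lem: Fast F=R+Rast}(ii) gives $(F_W(x),0)\sim(0,xR(x)+\gamma_{F_W}x^2)$ in $W_2(\F_q[x])$. I would then expand $(F_W(x)+F_W(t),0)=(F_W(x),0)+(F_W(t),0)+(0,F_W(t)F_W(x))$. The cross term satisfies $F_W(t)F_W(x)\sim F_W^\ast(F_W(t))x$ by Lemma~\ref{lem: basic property of sim}(ii). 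Because of the twist by $F_W^\ast(t)^2x$ in $R_\epsilon$, the character $\psi$ must be the restriction of the fixed character $\xi_q$. This forces the computation to take place with a single character on the $\Z/4$-type extension, giving $p-1$ equal contributions, one for each $\psi$ obtained by scaling. So for each $\psi$ I would pick $c\in\F_p^\times$ with $\psi=\psi_q(c\,\cdot)$ and handle the scaling by Teichmüller lifts.

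Next I would compute the pushforward. I would show that $F_{W*}\Ql\cong\bigoplus_{v\in W}\mathcal{L}_\psi(vx)$, as in Lemma~\ref{fpush}, and rewrite $\mathcal{L}_\xi((x,0))\otimes\mathcal{L}_\psi(vx)\cong\mathcal{L}_\xi((x+v,0))\otimes(\text{const})$. The point is that $(x,0)+(v,0)=(x+v,vx)$ and $v^2x\sim$ twisted terms. The projection formula then reduces everything to the one-dimensional groups $H^1_{\mathrm c}(\A^1,\mathcal{L}_\xi((x+t+v,0)))$. The Frobenius trace on each is $-\sum_{x\in\F_q}Q_q(x+t+v)\cdot(\text{twist})$. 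After substitution this becomes a Gauss sum, and Hasse--Davenport evaluates it as $(-1-\sqrt{-1})^{[\F_q:\F_2]}$, since the base sum $-\sum_{x\in\F_2}\xi_2(x,0)=-(1+\sqrt{-1})$. The shift contributes the factor $Q_q(t+v)^{-1}$. Collecting the $p-1$ characters gives $\prod_v(1-\tau_vT)^{p-1}$.

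I expect the main obstacle to be the Witt-vector bookkeeping in the second step: getting the exact constant $\gamma_{F_W}+F_W^\ast(t)^2$, and checking that all $p-1$ nontrivial $\psi$ give the same eigenvalue rather than Galois-conjugate ones. I would verify both by a direct trace computation using $\sum_{x\in\F_q}\psi_q(xR_\epsilon(x))$, just as Corollary~\ref{expsum} follows from Theorem~\ref{lp}. Alternatively, this statement is exactly \cite[Theorem~4.12]{ITT1}, so one could simply cite it.
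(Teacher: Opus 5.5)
The paper gives no proof of this statement: it is quoted from \cite[Theorem~4.12]{ITT1}, so your closing remark is exactly the paper's route. Your self-contained sketch along the lines of Theorem~\ref{lp} is a sensible strategy, but its one nontrivial step is wrong as written. Pulling $(x+t,0)$ back along $F_W$ gives $(F_W(x)+t,0)$, not $(F_W(x)+F_W(t),0)=(F_W(x+t),0)$. With your expansion the cross term is $F_W(t)F_W(x)\sim F_W^\ast(F_W(t))\,x$, which does not match the twist in $R_\epsilon$. Already for $p=2$ and $W=\F_2$ one has $F_W(x)=x^2+x$ and $F_W^\ast(F_W(t))=t^2+t^{1/2}$. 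The twist instead requires $F_W^\ast(t)^2x^2\sim(t+t^{1/2})x$, and $(t^2+t)x\not\sim 0$ unless $t\in\F_2$, so you would be computing a different sheaf.

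The correct decomposition is $(F_W(x)+t,0)=(F_W(x),0)+(t,0)+(0,tF_W(x))$, and $tF_W(x)\sim F_W^\ast(t)x$ by Lemma~\ref{lem: basic property of sim}(ii). To reach $F_W^\ast(t)^2x^2$ you can square first: $(0,(tF_W(x))^2)\sim(0,(F_W^\ast(t)x)^2)$, which with Lemma~\ref{lem: Fast F=R+Rast}(ii) gives $(F_W(x),(tF_W(x))^2)\sim(0,xR_\epsilon(x))$, as in Lemma~\ref{lemma: F,tF sim  xRx}(i). Alternatively, use $\mathcal{L}_\psi(z^2)\cong\mathcal{L}_\psi(z)$. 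This holds because $\psi$ is the restriction of $\xi_q$ and hence factors through $\Tr_{p/2}$, but it fails for a general $\psi\in\F_p^\vee$ when $p>2$. That, not the twist, is why the character must come from $\xi_q$. The outcome is $\mathcal{L}_\psi(xR_\epsilon(x))\cong F_W^{-1}\mathcal{L}_\xi((x+t,0))$ twisted by the constant $Q_q(t)^{-1}$. Pointwise this reads $\psi_q(xR_\epsilon(x))=Q_q(F_W(x)+t)\,Q_q(t)^{-1}$.

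The other points you defer are easy but should be settled.
\begin{itemize}
\item \textbf{All $p-1$ characters.} For another nontrivial character $\psi'=\psi(c\,\cdot)$, write $c=d^2$ with $d\in\F_p^\times$. Since $R_\epsilon$ is $\F_p$-linear, $c\,xR_\epsilon(x)=(dx)R_\epsilon(dx)$. So the automorphism $x\mapsto dx$ of $\A^1_{\F_q}$ identifies the $\psi'$- and $\psi$-components as Frobenius modules. This gives the exponent $p-1$ without Teichm\"uller lifts.
\item \textbf{Dimension one.} Each $H^1_{\mathrm c}(\A^1_\F,\mathcal{L}_\xi((x+s,0)))$ is one-dimensional. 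The $\#W=p^e$ summands are geometric translates of one another, and the whole $\psi$-component has dimension $\deg R_\epsilon=p^e$ by Lemma~\ref{lem: geometric properties of CRs}.
\item \textbf{The factor $Q_q(t+v)^{-1}$.} This factor is not produced by the translation, which leaves the Gauss sum unchanged. It comes from combining $Q_q(t)^{-1}$ with the constant $Q_q(v)^{-1}\psi_q(tv)$. That constant arises from $(x+t+v,0)=(x+t,0)+(v,0)+(0,(x+t)v)$, and the two combine via $Q_q(t+v)=Q_q(t)Q_q(v)\psi_q(tv)$.
\end{itemize}
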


The following theorem is a characteristic-two analogue of Theorem~\ref{t1}. 
\begin{theorem}\label{recipe for ex curve intro}
Assume that $[\F_q:\F_p]$ is even. Then the following statements hold: 
\begin{enumerate}
    \item The curve $\overline{D}_{\epsilon}$ is $\F_q$-extremal if and only if 
    \[
    Q_q(v)=\psi_q(tv)\qquad\text{for all } v\in W. 
    \]
    Moreover, if these conditions are satisfied, then 
    $\overline{D}_{\epsilon}$ is $\F_q$-maximal (resp.\ $\F_q$-minimal) if and only if 
    \[
    Q_q(t)=-(\sqrt{-1})^{[\F_q:\F_2]/2}\qquad\text{(resp.\ }Q_q(t)=(\sqrt{-1})^{[\F_q:\F_2]/2}\text{)}. 
    \]
    \item Let 
\[R(x)=\sum_{i=0}^e a_i x^{p^i} \in 
 \F_q[x],\]
 where $e\geq1$ and $a_e\neq0$, and assume that $\overline{C}_R$ is $\F_q$-extremal. Then there exists a pair $\epsilon=(W,t)$ as above and $a\in\F_q^\times$ such that $(x,y)\mapsto (ax,y)$ induces an isomorphism 
 \[ C_R\xrightarrow{\cong}D_{\epsilon}.
 \]
\end{enumerate}
\end{theorem}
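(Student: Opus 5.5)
Part (i) follows from Theorem~\ref{twist} in the same way Theorem~\ref{t1}(i) follows from Theorem~\ref{lp}; part (ii) repeats the argument of Theorem~\ref{t1}(ii) with Lemma~\ref{lem: Fast F=R+Rast} replacing Lemma~\ref{lem: Fast aF=R+Rast}.

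\emph{Part (i).} Set $N\coloneqq[\F_q:\F_2]$, which is even because $[\F_q:\F_p]$ is even. Then $(-1-\sqrt{-1})^2=2\sqrt{-1}$, so
\[
(-1-\sqrt{-1})^{N}=(2\sqrt{-1})^{N/2}=\sqrt{q}\,(\sqrt{-1})^{N/2}.
\]
Substituting into Theorem~\ref{twist}, all roots $\tau_v$ equal a common value $\pm\sqrt q$ if and only if $Q_q(t+v)$ is independent of $v\in W$. That common value is $Q_q(t)$, taking $v=0$. So extremality is equivalent to
\[
Q_q(t+v)=Q_q(t)\qquad\text{for all } v\in W.
\]
Witt vector addition gives $(t,0)+(v,0)=(t+v,tv)$, hence $(t+v,0)=(t,0)+(v,0)-(0,tv)$. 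Since $\xi_q$ is additive on $W_2(\F_q)$,
\[
Q_q(t+v)=Q_q(t)\,Q_q(v)\,\psi_q(tv)^{-1}.
\]
Thus the condition becomes $Q_q(v)=\psi_q(tv)$ for all $v\in W$.

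When this holds, $\tau_v=Q_q(t)^{-1}(\sqrt{-1})^{N/2}\sqrt q$. The curve is maximal exactly when $\tau_v=-\sqrt q$, i.e.\ $Q_q(t)=-(\sqrt{-1})^{N/2}$, using $Q_q(t)^{-1}=Q_q(t)^3$ and that $(\sqrt{-1})^{N/2}$ is a fourth root of unity. The minimal case is symmetric. One small point needs checking: $Q_q(t)=\pm(\sqrt{-1})^{N/2}$ can only hold if $(\sqrt{-1})^{N/2}$ and $Q_q(t)$ agree up to sign, which is automatic in the extremal case.

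\emph{Part (ii).} Let $E\coloneqq R+R^\ast$ and $V\coloneqq\ker E$. By Lemma~\ref{lemma: When CR is extremal Fq Fp is even and VR in Fq}, $V\subset\F_q$. By \cite[Lemma~2.6, Corollary~2.10]{ITT1}, choosing a maximal totally isotropic subspace gives a factorization $E=F^\ast\circ(cF)$. In characteristic two we want $c$ to be a square. Every element of $\F_q$ is a square, so after a rescaling we may write $E=G^\ast\circ G$ with $G\coloneqq\sqrt c\,F$.

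The map $x\mapsto ax$ transports $R$ to $R_a(x)\coloneqq aR(ax)$, and $(x,y)\mapsto(ax,y)$ maps $C_R$ to $C_{R_a}$. Moreover $R_a+R_a^\ast=a\circ E\circ a$. We choose $a$ so that the normalized $G$ satisfies $1\in\ker G^\ast$. Then $W\coloneqq\ker G^\ast\subset\F_q$ by the same surjection argument as in Theorem~\ref{t1}(ii), and after normalizing leading coefficients we get $F_W=G$, with $G^\ast(1)=0$ built in. Then $R_a$ and $R_{(W,0)}$ have the same $R+R^\ast$, so they differ by a term $\lambda x$ with $\lambda^2=\lambda^{1/2}\cdot(\text{stuff})$. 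More precisely, $R_a-R_{(W,0)}=\lambda x$ with $\lambda+\lambda^{\ast}=0$, which holds automatically in characteristic two, so $\lambda\in\F_q$ is arbitrary.

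\emph{Main obstacle.} The step I expect to be hardest is showing that this $\lambda$ has the form $\gamma_{F_W}+F_W^\ast(t)^2$ for some $t$, which amounts to showing $\lambda-\gamma_{F_W}\in F_W^\ast(\F_q)^{2}$. By Lemma~\ref{exact}, $F_W^\ast(\F_q)$ is the annihilator under the trace pairing of $\ker F_W\cap\F_q$. So one must use extremality to show $\Tr(\,\cdot\,u)=0$ for $u\in\ker F_W\cap\F_q$. I would try to extract this from the point count or from Lemma~\ref{lem: Fast F=R+Rast}(ii), restricted to $\ker F_W$, together with extremality. Simultaneously arranging $1\in W$ through the choice of $a$ is the other delicate point. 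This is essentially \cite{ITT1}'s argument, so I would cite it there.
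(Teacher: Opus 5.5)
Your part (i) is the paper's argument. Theorem~\ref{twist} reduces extremality to $Q_q(t+v)=Q_q(t)$ for all $v\in W$, and the identity $Q_q(t+v)=Q_q(t)Q_q(v)\psi_q(tv)^{-1}$ turns this into $Q_q(v)=\psi_q(tv)$. You also derive the maximal/minimal criterion directly from $(-1-\sqrt{-1})^N=\sqrt q\,(\sqrt{-1})^{N/2}$, where the paper cites \cite[Theorem~1.3]{ITT1}; that derivation is correct. The point you leave open is settled by taking $v=1\in W$. Since $Q_q(1)=(\sqrt{-1})^N=(-1)^{N/2}$ and $\psi_q(t)=(-1)^{\Tr_{q/2}(t)}$, the condition forces $\Tr_{q/2}(t)\equiv N/2 \pmod 2$. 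The first Witt component of $\Tr_{q/2}(t,0)$ is $\Tr_{q/2}(t)$, so this puts $Q_q(t)$ in $\{\pm(\sqrt{-1})^{N/2}\}$. Hence the common value of the $\tau_v$ is $\pm\sqrt q$.

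For part (ii), the paper gives nothing beyond \cite[Theorem~1.3]{ITT1}, so your fallback citation matches it. However, your sketch has a step that fails: you cannot choose $a$ so that $1\in W$.
\begin{itemize}
\item \emph{Why it fails.} Once $E=G^\ast\circ G$, you get $R_a+R_a^\ast=(G\circ a)^\ast\circ(G\circ a)$ with $\ker(G\circ a)^\ast=\ker(a\circ G^\ast)=\ker G^\ast$. For a fixed $\ker G$, the factor with $G^\ast\circ G=E$ is unique, since replacing $G$ by $bG$ forces $b^2=1$. So the rescaling does not move $W$ at all.
\item \emph{Why you don't need it.} The coefficient of $x$ in $G^\ast\circ G$ is $\sum_i b_i^{2p^{-i}}=G^\ast(1)^2$, while the coefficient of $x$ in $R+R^\ast$ is $2a_0=0$. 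So $1\in\ker G^\ast$ automatically.
\item \emph{What the rescaling is for.} It only normalizes $G$. Comparing $G^\ast(x)^{p^e}$ with $f_W$ gives $G(x)=F_W(b_0x)$, where $b_0\neq 0$ is the coefficient of $x$ in $G$. Then $R'(x):=b_0^{-1}R(b_0^{-1}x)$ satisfies $R'+R'^\ast=F_W^\ast\circ F_W$, and $(x,y)\mapsto(b_0x,y)$ maps $C_R$ isomorphically onto $C_{R'}$.
\item \emph{Direction.} With your $R_a(x)=aR(ax)$, the map $(x,y)\mapsto(ax,y)$ goes from $C_{R_a}$ to $C_R$, not the other way.
\end{itemize}

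Your main obstacle then closes with exactly the tools you name.
\begin{itemize}
\item Write $R'=R_0+\mu x$, with $R_0$ as in Lemma~\ref{lem: Fast F=R+Rast}(i) for $F=F_W$. In your notation $\lambda=\mu-\gamma_{F_W}$, so the condition to prove is $\lambda\in F_W^\ast(\F_q)^2$, not $\lambda-\gamma_{F_W}\in F_W^\ast(\F_q)^2$.
\item Take $u\in\ker F_W\subset V_{R'}\subset\F_q$. Evaluating Lemma~\ref{lem: Fast F=R+Rast}(ii) at $u$ gives $uR_0(u)+\gamma_{F_W}u^2=d^p-d$ for some $d\in\F_q$.
\item Extremality, via the inclusion $H_{R'}\subset\F_q\times\F_q$ of \cite[Lemma~A.3]{ITT1}, gives $\Tr_{q/p}(uR'(u))=0$.
\item Subtracting yields $0=\Tr_{q/p}((\mu-\gamma_{F_W})u^2)=\Tr_{q/p}(\sqrt{\mu-\gamma_{F_W}}\,u)^2$ for all such $u$.
\item Lemma~\ref{exact} then produces $t\in\F_q$ with $F_W^\ast(t)^2=\mu-\gamma_{F_W}$, that is, $R'=R_{(W,t)}$.
\end{itemize}
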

\begin{proof}
By Theorem~\ref{twist}, the curve $\overline{D}_\epsilon$ is $\F_q$-extremal if and only if 
\[
Q_q(v+t)=Q_q(t)\qquad\text{for all }v\in W. 
\]
Using the identity $Q_q(x+y)Q_q(x)^{-1}Q_q(y)^{-1}=\psi_q(xy)$, we can rewrite this condition as 
\[
Q_q(v)=\psi_q(tv)\qquad\text{for all }v\in W.
\]
The remaining assertions follow from \cite[Theorem~1.3]{ITT1}. 
\end{proof}

\begin{remark}
We explain the analogy between Theorems~\ref{t1} and~\ref{recipe for ex curve intro}. 

In Theorem~\ref{recipe for ex curve intro}, we consider the condition 
\[
    Q_q(v)=\psi_q(tv)\qquad\text{for all } v\in W. 
    \]
In particular, this implies that $Q_q|_W$ is a group homomorphism. Hence, 
\begin{equation}\label{even case}
\psi_q(vw)=Q_q(v+w)Q_q(v)^{-1}Q_q(w)^{-1}=1 \qquad\text{for all }v,w\in W.
\end{equation}
 
In the odd-characteristic case, we consider the condition 
\[
\Tr_{q/p}(a^{-1}v^2)=0\qquad\text{for all }v\in W, 
\]
which is equivalent to 
\begin{equation}\label{odd case}
    \Tr_{q/p}(a^{-1}vw)=0\qquad\text{for all }v,w\in W. 
\end{equation}
The conditions \eqref{even case} and \eqref{odd case} are analogous. 

However, in the even-characteristic case, condition~\eqref{even case} does not imply that $Q_q(v)=1$ for all $v\in W$. Therefore, an additional parameter $t$ is needed to encode the group homomorphism $Q_q|_W$.
\end{remark}
\subsection{Comparison with \cite[Condition 4.6]{ITT}}\label{ssec: Comparison with ITT}
We compare \cite[Condition 4.6]{ITT} with the condition in Theorem \ref{t1}(i). 
We use the notation of \cite{ITT} and \cite{TT}. 
\begin{proposition}\label{4.6pre}
Assume that $p$ is odd. 
Let $R(x) \in \F_q[x]$ be an $\F_p$-linearized polynomial of degree $p^e$. Let $V_R$ be as in \eqref{vr}. 
Assume that there exists a maximal totally isotropic subspace $\overline{A}$ in $V_R$ with respect to the symplectic pairing $\omega_R$ in \cite[Lemma 2.3(3)]{TT} such that $\overline{A} \subset \F_q$ (cf.\ \cite[(2.2) and Lemma 2.6]{TT}). 

Let $F_A(x) \in \F_q[x]$ be the $\F_p$-linearized separable polynomial such that 
\begin{itemize}
\item $\ker F_A = \overline{A}$, and 
\item $F_A^\ast(x)^{p^e}$ is monic.  
\end{itemize}
Let $a(x) \in \F_q[x]$ be the $\F_p$-linearized polynomial such that $a \circ F_A = x^q - x$. 
Let $c_A \in \F_q^{\times}$ be a constant defined by 
$R(x)$ and $F_A(x)$ as in \cite[(3.4)]{ITT}. 

Then we have 
\begin{itemize}
\item[{\rm (i)}] $F_A^\ast \circ (2c_A F_A)=R+R^\ast$, 
\item[{\rm (ii)}] $(\ker a)^{\perp} := \left\{x \in \F_q \mid \Tr_{q/p}(xt)=0 \ \text{for all } t \in \ker a\right\}$ is equal to $\ker F_A^\ast$. 
\item[{\rm (iii)}] Let $W\coloneqq \ker F_A^{\ast} \subset \F_q$. Then 
$F_W=F_A$. 
\end{itemize}
\end{proposition}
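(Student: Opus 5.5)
The plan is to prove (ii) and (iii) first, since they are essentially formal consequences of the adjoint calculus of \S2, and then handle (i), which is where the definitions from \cite{ITT} and \cite{TT} enter. Throughout, $E\coloneqq R+R^\ast$, so $V_R=\ker E$, $\dim_{\F_p}V_R=2e$, and $\dim_{\F_p}\overline{A}=e$ by maximal isotropy. Hence $F_A$ has degree $p^e$. Since $F_A$ is separable, its coefficient of $x$ is nonzero, so $F_A^\ast(x)^{p^e}$ is separable of degree $p^e$ and $\#\ker F_A^\ast=p^e$.

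For (ii), the relation $a\circ F_A=x^q-x$ shows $\ker a=F_A(\F_q)$: the inclusion $\supseteq$ is clear, and both sides have cardinality $q/\#\overline{A}$ because $\overline{A}\subset\F_q$. For $x,t\in\F_q$, Lemma~\ref{lem: basic property of sim}(ii) gives $xF_A(t)\sim F_A^\ast(x)t$, hence
\[
\Tr_{q/p}(xF_A(t))=\Tr_{q/p}(F_A^\ast(x)t).
\]
Nondegeneracy of the trace form then yields $(\ker a)^\perp=\ker F_A^\ast\cap\F_q$. On the other hand, $\dim(\ker a)^\perp=n-\dim F_A(\F_q)=\dim(\overline{A})=e$, since $\ker F_A\cap\F_q=\overline{A}$. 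Since $\#\ker F_A^\ast=p^e$, we conclude $\ker F_A^\ast\subset\F_q$ and $(\ker a)^\perp=\ker F_A^\ast$; equivalently, this is Lemma~\ref{exact} applied to $F_A$. For (iii), I will repeat the argument at the end of the proof of Theorem~\ref{t1}(ii). The polynomial $F_A^\ast(x)^{p^e}$ is monic, separable, and vanishes exactly on $W$, so it equals $f_W$. Taking adjoints gives $f_W^\ast(x)=F_A(x^{p^{-e}})$, and substituting $x^{p^e}$ for $x$ gives $F_W=F_A$.

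For (i), the plan is a two-sided division in the ring $\mc R$. Since $\overline{A}=\ker F_A\subset\ker E$ and $F_A$ is separable, right division (Euclid's algorithm in the twisted polynomial ring, after composing with a large Frobenius power to clear negative exponents) gives $E=G\circ F_A$, where $G\in\mc R$ has $p$-exponents between $-e$ and $0$. Self-adjointness $E^\ast=E$ then gives $E=F_A^\ast\circ G^\ast$. Next I will show $G=b\circ F_A^\ast$ for a constant $b$. Equivalently, $G$ vanishes on $\ker F_A^\ast$, or again equivalently, $F_A(V_R)\subset\ker F_A^\ast$ with equality by dimension count. This is exactly where isotropy enters. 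I expect the pairing $\omega_R$ of \cite[Lemma~2.3(3)]{TT} to be expressible, for $x,y\in V_R$, as $\omega_R(x,y)=xR(y)-yR(x)$ up to $\wp$, hence by Lemma~\ref{lem: basic property of sim}(ii) as a trace-type pairing $\langle F_A(x),y\rangle$ built from $F_A$. Then ``$\overline{A}^\perp=\overline{A}$'' translates into $F_A(V_R)$ being annihilated by $F_A^\ast$. Once $E=F_A^\ast\circ b\circ F_A$, it remains to identify $b=2c_A$. This I will do by comparing top coefficients (of $x^{p^e}$) on both sides, using the explicit definition of $c_A$ in \cite[(3.4)]{ITT}, which I expect to be given in terms of the leading data of $R$ and $F_A$. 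Alternatively, I can use Lemma~\ref{lem: Fast aF=R+Rast}(ii): comparing $bF_A(x)^2\sim 2xR(x)$ with the congruence $xR(x)\sim c_AF_A(x)^2$ that presumably defines $c_A$ forces $b=2c_A$.

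The main obstacle is the middle step of (i): matching the symplectic pairing $\omega_R$ and the constant $c_A$ of \cite{ITT,TT} with the adjoint formalism here, so that maximal isotropy of $\overline{A}$ really yields the symmetric factorization $E=F_A^\ast\circ b\circ F_A$ and not merely $E=G\circ F_A$. I will first try to realize $\omega_R$ via Lemma~\ref{lem: basic property of sim}(ii) as above. If that does not work directly, the fallback is to invoke \cite[Corollary~2.10]{ITT1}, which produces such a factorization from a maximal totally isotropic subspace, and then check that it agrees with $F_A$ up to a scalar.
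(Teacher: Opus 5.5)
Your arguments for (ii) and (iii) are correct. For (ii) you use the identity $\Tr_{q/p}(xF_A(t))=\Tr_{q/p}(F_A^\ast(x)t)$ directly and then count. The paper instead applies Lemma~\ref{exact} to $a$ and uses $F_A^\ast\circ a^\ast=x^{q^{-1}}-x$; the two routes are equivalent. Your (iii) is the paper's argument.

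Part (i), however, has a genuine gap. Your reduction of (i), via $E=G\circ F_A$, to the inclusion $F_A(V_R)\subset\ker F_A^\ast$ is sound. But that inclusion is the whole content of (i), and you do not prove it.

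The mechanism you sketch does not work as stated:
\begin{itemize}
\item Since $\F$ is algebraically closed, every element of $\F$ lies in $\wp(\F)$. So ``$\omega_R(x,y)=xR(y)-yR(x)$ up to $\wp$'' for $x,y\in V_R$ carries no information. The value $\omega_R(x,y)\in\F_p$ is a difference of two specific Artin--Schreier roots, not a class modulo $\wp$.
\item $V_R$ is not assumed to lie in $\F_q$, so there is no trace pairing on it.
\item A formula $\omega_R(x,y)=\langle F_A(x),y\rangle$, with the pairing linear in the first slot, cannot hold. It would put $\overline{A}=\ker F_A$ in the radical of the nondegenerate form $\omega_R$.
\end{itemize}
The fallback through \cite[Corollary~2.10]{ITT1} would require identifying the pairing of \cite[Lemma~2.6]{ITT1} with $\omega_R$ of \cite{TT}, and you do not do this. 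The value $b=2c_A$ is also only conjectured.

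The missing input is \cite[(A.8)]{TT}, namely $c_AF_A(x)^2\sim xR(x)$. The isotropy hypothesis on $\overline{A}$ has already been used there. You invoke this congruence only to pin down the constant, but it gives the whole factorization directly, so no division or isotropy argument is needed. By Lemma~\ref{lem: basic property of sim}(ii), in $\F_q[x^{p^{-\infty}}]$ we have
\[
x\,F_A^\ast\bigl(2c_AF_A(x)\bigr)\sim 2c_AF_A(x)^2\sim 2xR(x)\sim x(R+R^\ast)(x).
\]
Hence $xD(x)\sim 0$ for the self-adjoint element $D:=F_A^\ast\circ(2c_AF_A)-(R+R^\ast)=\sum_i d_ix^{p^i}$, where $d_{-i}=d_i^{p^{-i}}$.

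Since $d_ix^{p^i+1}\sim (d_ix^{p^i+1})^{p^{-i}}=d_{-i}x^{p^{-i}+1}$, this becomes $d_0x^2+2\sum_{i>0}d_ix^{p^i+1}\sim 0$. By Lemma~\ref{lem: basic property of sim}(i), this relation holds in $\F_q[x]$. Because $p$ is odd, all exponents $2$ and $p^i+1$ are prime to $p$. A nonconstant $h^p-h$ has leading exponent divisible by $p$, so every $d_i$ vanishes, i.e.\ $D=0$. This is exactly the paper's proof of (i).
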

\begin{proof}
(i) 
By \cite[(A.8)]{TT}, we have 
\[
c_A F_A(x)^2 \sim x R(x). 
\]
We use a similar argument to the proof of Lemma \ref{lem: Fast aF=R+Rast}(ii). 
Since $x R(x) \sim x R^\ast(x)$, 
\[
2 c_A F_A(x)^2 \sim 2 xR(x) \sim x(R(x)+R^\ast(x)). 
\]
Since $2 c_A F_A(x)^2=F_A(x) \cdot 2c_A F_A(x) \sim x (F_A^\ast \circ (2 c_A F_A))$, we obtain 
\[
x (F_A^\ast \circ (2 c_A F_A)-(R+R^\ast)) \sim 0. 
\]
Hence there exists $d \in \F_q[x^{p^{-\infty}}]$ such that 
\[
d^p-d=x (F_A^\ast \circ (2 c_A F_A)-(R+R^\ast)). 
\]
Since the right-hand side has the form 
\[
\sum_{i \in \mathbb{Z}} c_i x^{p^i+1},   
\]
we must have $d^p-d=0$ and hence 
$F_A^\ast \circ (2 c_A F_A)=R+R^\ast$.

(ii) 
By Lemma \ref{exact} for $f(x)=a(x)$, we obtain 
\[
(\ker a)^{\perp}=a^\ast(\F_q). 
\]
Taking the adjoint of $a \circ F_A=x^q-x$ gives 
$F_A^\ast \circ a^{\ast}=x^{q^{-1}}-x$.  
Hence 
\[
a^\ast(\F_q)=\ker F_A^{\ast}. 
\]
Hence, we conclude that $(\ker a)^{\perp}=\ker F_A^\ast$. 

(iii) We have $\deg F_A=p^e$. 
The $\F_p$-linearized polynomial 
$(F_A^\ast)^{p^e}$ is separable and monic. 
Since $F_A^\ast(x)^{p^e}=0$ for $x \in W$, 
\[
f_W(x)=\prod_{v \in W}(x-v)=F_A^\ast(x)^{p^e}. 
\]
Similarly to the proof of Theorem \ref{t1}(ii), we obtain $F_W(x)=f_W^\ast(x^{p^e})=F_A(x)$.  
\end{proof}
\cite[Condition 4.6]{ITT} states that 
\begin{equation}\label{4.6}
\Tr_{q/p}(c_A^{-1} v^2)=0 \quad \textrm{for all $v \in (\ker a)^{\perp}$}. 
\end{equation}
\begin{corollary}\label{c4.6}
Let the notation and the assumptions be as in Proposition \ref{4.6pre}. 
Set $\rho=(W,a) \coloneqq (\ker(F_A^\ast), 2c_A)$.
The condition \eqref{4.6} is equivalent to 
the condition in Theorem {\rm \ref{t1}(i)}:  
\[
\Tr_{q/p}(a^{-1} v^2)=0 \quad \textrm{for all $v \in W$}. 
\]
\end{corollary}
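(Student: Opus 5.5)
The plan is to compare the two conditions term by term, using Proposition~\ref{4.6pre}(ii) to match the index sets. By Proposition~\ref{4.6pre}(ii), the set over which \eqref{4.6} is imposed is
\[
(\ker a)^{\perp}=\ker F_A^\ast=W .
\]
Here $a(x)$ denotes the linearized polynomial with $a\circ F_A=x^q-x$, and should not be confused with the constant $a=2c_A$ in $\rho$. So both conditions quantify over the same set $v\in W$, and it remains to compare the elements whose traces are required to vanish.

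With $\rho=(W,2c_A)$, the condition of Theorem~\ref{t1}(i) reads
\[
\Tr_{q/p}\bigl((2c_A)^{-1}v^2\bigr)=0\qquad\text{for all }v\in W .
\]
Since $p_0$ is odd, $2^{-1}\in\F_p^\times$, and $\Tr_{q/p}$ is $\F_p$-linear. Hence
\[
\Tr_{q/p}\bigl((2c_A)^{-1}v^2\bigr)=2^{-1}\Tr_{q/p}\bigl(c_A^{-1}v^2\bigr),
\]
and the left-hand side vanishes if and only if $\Tr_{q/p}(c_A^{-1}v^2)=0$. Applying this for every $v\in W$ shows that \eqref{4.6} and the condition of Theorem~\ref{t1}(i) are equivalent.

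It is also worth recording why $\rho=(W,2c_A)$ is the right pair. By Proposition~\ref{4.6pre}(iii) we have $F_W=F_A$, and by Proposition~\ref{4.6pre}(i) we have $F_A^\ast\circ(2c_AF_A)=R+R^\ast$. The uniqueness in Lemma~\ref{lem: Fast aF=R+Rast}(i) then gives $R_\rho=R$, so Theorem~\ref{t1}(i) is being applied to the curve $\overline{C}_R$ itself.

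There is no real obstacle here. The only point needing care is keeping the two uses of the letter $a$ apart, and relying on Proposition~\ref{4.6pre}(ii) for the identification $(\ker a)^{\perp}=W$.
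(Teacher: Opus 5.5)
Your proposal is correct and matches the paper's proof: both identify $(\ker a)^{\perp}=W$ via Proposition~\ref{4.6pre}(ii), note $F_W^\ast\circ(aF_W)=R+R^\ast$ via parts (i) and (iii), and conclude from $\Tr_{q/p}(a^{-1}v^2)=2^{-1}\Tr_{q/p}(c_A^{-1}v^2)$ for $v\in W$. Your added remark that the uniqueness in Lemma~\ref{lem: Fast aF=R+Rast}(i) gives $R_\rho=R$ is a correct elaboration of what the paper leaves implicit.
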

\begin{proof}
From Proposition \ref{4.6pre}, it follows that 
\[
F_W^\ast \circ (a F_W)=R+R^{\ast}, \qquad 
(\ker a)^{\perp}=W.
\]
Hence the assertion follows from 
$\Tr_{q/p}(a^{-1} v^2)=2^{-1}\Tr_{q/p}(c_A^{-1} v^2)$ for $v \in W$. 
\end{proof}

\section{Quotients of van der Geer--van der Vlugt curves}\label{appC}

As an application of the results of the previous section, we give alternative proofs of several results from \cite{CO0}. 
Let $p$ be a power of a prime number $p_0$, and $q$ be a power of $p$. 

All the results in this section will be deduced from the following theorem. 

\begin{theorem}\label{exist12}
Assume that $n=[\F_q:\F_p]$ is even. 
Let $R(x) \in \F_q[x]$ 
be an $\F_p$-linearized polynomial of degree $p^e$. Then the following statements hold. 
\begin{itemize}
\item[{\rm (i)}] 
Assume that the curve $\overline{C}_R$ is $\F_q$-maximal. Assume that $e$ satisfies 
\[
\begin{cases}
0 \le e < n/2 & \textrm{if $p$ is odd}, \\
1 \le e < n/2 &\textrm{if $p$ is even}. 
\end{cases}
\]
Then there exists an $\F_p$-linearized polynomial $R_1(x) \in \F_q[x]$ of degree $p^{e+1}$ with the following properties: 
\begin{itemize}
    \item The curve $\overline{C}_{R_1}$ is $\F_q$-maximal. 
    \item There exists a finite \'etale morphism $C_{R_1}\to C_R$. 
\end{itemize}
\item[{\rm (ii)}] 
Assume that the curve $\overline{C}_R$ is $\F_q$-minimal. Assume that $e$ satisfies 
\[
\begin{cases}
0 \le e < (n/2)-1 & \textrm{if $p$ is odd}, \\
1 \le e < (n/2)-1 &\textrm{if $p$ is even}. 
\end{cases}
\]
Then there exists an $\F_p$-linearized polynomial $R_1(x)\in \F_q[x]$ of degree $p^{e+1}$ with the following properties: 
\begin{itemize}
    \item The curve $\overline{C}_{R_1}$ is $\F_q$-minimal. 
    \item There exists a finite \'etale morphism $C_{R_1}\to C_R$. 
\end{itemize}
\end{itemize}
\end{theorem}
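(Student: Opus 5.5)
The plan is to reduce everything to linear algebra of the subspace $W$ via Theorem~\ref{t1} (and Theorem~\ref{recipe for ex curve intro} when $p$ is even), and then to build the covering from a factorization $F_{W_1}=F_W\circ G$.

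I treat $p$ odd first. By Theorem~\ref{t1}(ii), $R=R_\rho$ for some pair $\rho=(W,a)$ with $\dim_{\F_p}W=e$. By Theorem~\ref{t1}(i), $W$ is totally isotropic for the nondegenerate $\F_p$-quadratic form $Q_a(v):=\Tr_{q/p}(a^{-1}v^2)$ on the $n$-dimensional space $\F_q$. The idea is to enlarge $W$ to an $(e+1)$-dimensional totally isotropic subspace $W_1\supset W$ and set $R_1:=R_{(W_1,a)}$. The sign $\left(\frac{a}{q}\right)\left(\frac{-1}{\sqrt q}\right)$ is unchanged, so Theorem~\ref{t1}(i) shows that $\overline{C}_{R_1}$ is maximal (resp.\ minimal) exactly when $\overline{C}_R$ is. Moreover $\deg R_1=p^{e+1}$, because $\deg R_\rho=\deg F_W^2/p^{0}$ corresponds to $p^{\dim W}$ (as for $R$ itself).

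The extension step uses Witt theory over $\F_p$. Every totally isotropic subspace lies in a maximal one, and all maximal ones have dimension equal to the Witt index $\nu$ of $Q_a$. So it suffices to show $\nu=n/2$ in the maximal case and $\nu\ge n/2-1$ in the minimal case.

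In the maximal case, $\left(\frac{a}{q}\right)=-\left(\frac{-1}{\sqrt q}\right)=\left(\frac{\xi}{q}\right)$, where $\xi^{\sqrt q-1}=-1$ as in the proof of Corollary~\ref{maxc}. Hence $a=\xi^{-1}\eta^2$ for some $\eta\in\F_q^\times$. The proof of Corollary~\ref{maxc} then shows that $\eta\F_{\sqrt q}$ is totally isotropic of dimension $n/2$, so $\nu=n/2>e$.

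In the minimal case, I only need the general fact that a nondegenerate quadratic form over a finite field has anisotropic kernel of dimension at most $2$. This holds because every form of dimension $\ge3$ over a finite field is isotropic. It gives $\nu\ge n/2-1>e$.

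Next I construct the étale map. Since $W\subset W_1$, I have $f_{W_1}=g\circ f_W$ for an $\F_p$-linearized $g$ of degree $p$. Taking adjoints and $p$-power roots, this yields $F_{W_1}^\ast=G^\ast\circ F_W^\ast$ for a separable linearized $G\in\F_q[x]$ of degree $p$, i.e.\ $F_{W_1}=F_W\circ G$.

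By Lemma~\ref{lem: Fast aF=R+Rast}(ii) applied twice,
\[
2xR_1(x)\sim aF_{W_1}(x)^2=aF_W(G(x))^2\sim 2G(x)R(G(x)).
\]
By Lemma~\ref{lem: basic property of sim}(i), this equivalence already holds in $\F_q[x]$. So there is $h\in\F_q[x]$ with $xR_1(x)-G(x)R(G(x))=h^p-h$. The map $(x,y)\mapsto(G(x),y-h(x))$ then sends $C_{R_1}$ to $C_R$. It identifies $C_{R_1}$ with the base change of $C_R\to\A^1$ along $G\colon\A^1\to\A^1$. Since $G$ is separable and linearized, $G'$ is a nonzero constant, so $G$ is finite étale, and hence so is $C_{R_1}\to C_R$.

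For $p$ even, I use Theorem~\ref{recipe for ex curve intro}(ii) to replace $C_R$ by an isomorphic $D_\epsilon$ with $\epsilon=(W,t)$, $1\in W$, $\dim W=e\ge1$. I then extend $W$ to $W_1\ni 1$ on which $v\mapsto Q_q(v)\psi_q(tv)^{-1}$ is still trivial. By~\eqref{even case}, this amounts to enlarging a totally singular subspace of the $\F_2$-valued quadratic form $v\mapsto$ (the exponent of $Q_q(v)\psi_q(tv)^{-1}$), and the same Witt-index count applies. The covering is built exactly as before, using Lemma~\ref{lem: Fast F=R+Rast}(ii) in $W_2$ in place of Lemma~\ref{lem: Fast aF=R+Rast}(ii).

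I expect the main obstacle to be the Witt-index bookkeeping, especially in characteristic two, where one must check that the relevant form is a genuine quadratic form with the right Arf invariant distinguishing maximal from minimal. Tracking the extra $\gamma_{F_W}$ and $t$ terms through the relation $F_{W_1}=F_W\circ G$ is also delicate.
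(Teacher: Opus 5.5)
\textbf{Odd $p$.} Your argument here is correct. It differs from the paper in how the extension is found. You use Witt theory for $Q_a$: the isotropic subspace $\eta\,\F_{\sqrt q}$ shows $Q_a$ is split in the maximal case, and Chevalley--Warning gives Witt index $\ge n/2-1$ in the minimal case. The paper instead counts points: $\#C_R(\F_q)=p\cdot\#A$ with $A=\{u\in\F_q\mid\Tr_{q/p}(uR(u))=0\}$, which gives $\#A>p^{2e}=\#V_R$. It then takes $W'=W+\langle aF_W(u)\rangle_{\F_p}$ for some $u\in A\setminus V_R$. By Lemma~\ref{exact} (applied to $f=F_W^\ast$), $aF_W(\F_q)$ is exactly the orthogonal of $W$ for $\Tr_{q/p}(a^{-1}xy)$, so both routes produce the same extensions. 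Your version explains the bounds on $e$ conceptually; the paper's needs no quadratic-form classification and works in every characteristic. Your covering step ($F_{W_1}=F_W\circ G$ plus Lemma~\ref{lem: Fast aF=R+Rast}(ii)) is the paper's.

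\textbf{Even $p$.} Here there is a genuine gap, in three places.

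\emph{Linearity over $\F_p$.} The function $v\mapsto Q_q(v)\psi_q(tv)^{-1}$ is $\mu_4$-valued, and where it is $\pm1$-valued it gives only an $\F_2$-quadratic form. Witt theory over $\F_2$ enlarges $W$ by an $\F_2$-line, but you need an $\F_p$-subspace of $\F_p$-dimension $e+1$. For $p=2^k$ with $k\ge2$, singularity of one vector $w$ does not make $W+\F_p w$ admissible. The right $\F_p$-valued object is $u\mapsto\Tr_{q/p}(uR(u))$ on $\F_q$, whose polar form has radical $V_R$. It is transported to the condition on $W$ by the identity $Q_q(F_W(x))\psi_q(tF_W(x))^{-1}=\psi_q(xR(x))$. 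What makes the whole line $\F_pF_W(u)$ work is the vanishing of $\Tr_{q/p}(uR(u))$, not merely $\psi_q(uR(u))=1$, because $\psi_q(\lambda^2uR(u))$ must equal $1$ for every $\lambda\in\F_p$.

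\emph{The maximal boundary case.} Even with this form, when the curve is maximal and $e=n/2-1$, the residual nondegenerate space $\F_q/V_R$ is $2$-dimensional. You must show it is hyperbolic. You give no analogue of the $\eta\,\F_{\sqrt q}$ argument and leave this ``Arf invariant'' question open. The paper's point count settles both this and the linearity issue at once, since maximality or minimality determines $\#A$.

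\emph{The covering.} You need the $t$-twisted relation $(F_W(x),(tF_W(x))^2)\sim(0,xR_\epsilon(x))$ in $W_2(\F_q[x])$. Lemma~\ref{lem: Fast F=R+Rast}(ii) alone does not account for the $F_W^\ast(t)^2x$ term in $R_\epsilon$. You also need to show that $(0,\alpha)\sim(0,\beta)$ in $W_2(\F_q[x])$ forces $\alpha-\beta=d^p-d$ for some $d\in\F_q[x]$. Both steps are deferred in your proposal.
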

The proof is given in the following subsection. 
\subsection{Proof of Theorem~\ref{exist12}}

Define 
\[
E:=R+R^\ast,\qquad V_R:=\ker E. 
\]
Since $\overline{C}_R$ is $\F_q$-extremal, we have $V_R\subset \F_q$. 
\begin{lemma}\label{lemma: existence of u}
    Let 
    \[
A:=\{u\in \F_q\mid \Tr_{q/p}(uR(u))=0\}. 
\]
Assume that $e<n/2$ if $\overline{C}_R$ is $\F_q$-maximal, and that $e<(n/2)-1$ if $\overline{C}_R$ is $\F_q$-minimal. 

Then the set $A\setminus V_R$ is nonempty. 
\end{lemma}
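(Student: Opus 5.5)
Since $A$ is defined by $\Tr_{q/p}(uR(u))=0$, I will bound $\#A$ from below with the character sum $\sum_{x\in\F_q}\psi_q(xR(x))$. Extremality of $\overline{C}_R$ fixes the value of this sum. Comparing the bound with $\#V_R\le p^{2e}$ then gives the claim.

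First I express $\#A$ through orthogonality of characters of $\F_p$:
\[
\#A=\frac1p\sum_{\psi\in\F_p^\vee}\sum_{x\in\F_q}\psi_q(xR(x))
=\frac{q}{p}+\frac1p\sum_{\psi\neq1}S_\psi,\qquad S_\psi:=\sum_{x\in\F_q}\psi_q(xR(x)).
\]
Next I identify $S_\psi$. By Lemma~\ref{lem: geometric properties of CRs} and the Grothendieck--Lefschetz trace formula,
\[
\sum_{\psi\neq1}S_\psi=\#C_R(\F_q)-q=\#\overline{C}_R(\F_q)-1-q.
\]
Here I use the relation $\#C_R(\F_q)=\sum_{x}\#\{y: y^p-y=xR(x)\}=\sum_x\sum_\psi\psi_q(xR(x))$. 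Extremality gives $\#\overline{C}_R(\F_q)=q+1\pm2g\sqrt q$ with $g=p^e(p-1)/2$. Hence
\[
\sum_{\psi\ne1}S_\psi=\pm p^e(p-1)\sqrt q.
\]
(Alternatively, Theorem~\ref{t1}(ii) together with Corollary~\ref{expsum} gives each $S_\psi$ individually, but only the sum is needed.) Therefore
\[
\#A=\frac{q\pm(p-1)p^e\sqrt q}{p},
\]
with the plus sign in the maximal case and the minus sign in the minimal case.

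Now $V_R=\ker(R+R^\ast)$ is the kernel of $(R+R^\ast)^{p^e}$, which is a linearized polynomial of degree $p^{2e}$. So $\#V_R\le p^{2e}$, and also $V_R\subset\F_q$. Moreover $V_R\subset A$, because for $u\in V_R$ we have $2uR(u)\sim u(R+R^\ast)(u)=0$ by Lemma~\ref{lem: basic property of sim}(ii). The task therefore reduces to showing $\#A>p^{2e}$.

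Write $\sqrt q=p^{m}$ with $m=n/2$.

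In the maximal case, $\#A\ge q/p=p^{2m-1}$. The hypothesis $e<m$ gives $2e\le 2m-2<2m-1$, so $\#A>p^{2e}$.

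In the minimal case,
\[
\#A=p^{m-1}\bigl(p^m-(p-1)p^e\bigr)>p^{m-1}\bigl(p^m-p^{e+1}\bigr).
\]
The hypothesis $e<m-1$ gives $e+1\le m-1$, so $p^m-p^{e+1}\ge p^m-p^{m-1}\ge p^{m-1}$. Hence $\#A>p^{2m-2}\ge p^{2e+2}>p^{2e}$.

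The main obstacle is the sign and counting bookkeeping: checking that $H^1_c$ of $C_R$ matches $H^1$ of $\overline{C}_R$, with exactly one point at infinity, and that the bound $\#V_R\le p^{2e}$ (degree of $R+R^\ast$ after raising to the $p^e$-th power) holds. Both follow from the earlier lemmas.
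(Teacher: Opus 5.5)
Your proof is correct and is essentially the paper's argument. The orthogonality computation amounts to the Artin--Schreier count $\#C_R(\F_q)=p\cdot\#A$. Extremality then gives $\#A=(q\pm(p-1)p^e\sqrt q)/p$, and you compare this with $\#V_R\le p^{2e}$ in both cases, as the paper does.

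The side remark $V_R\subset A$ is not needed, since the cardinality comparison alone finishes the proof. Its justification via $2uR(u)\sim u(R+R^\ast)(u)$ also gives no information when $p$ is even.
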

\begin{proof}
By Artin--Schreier theory, for a given $a\in\F_q$, the equation 
\[
y^p-y=a
\]
has a solution in $\F_q$ if and only if $\Tr_{q/p}(a)=0$. Moreover, when these equivalent conditions are satisfied, there are exactly $p$ solutions. Therefore, 
\[
\# C_R(\F_q)=p\cdot\#A. 
\]

Assume first that $\overline{C}_R$ is $\F_q$-maximal. 
 Since $\overline{C}_R$ has genus $p^e(p-1)/2$ and  $\overline{C}_R\setminus C_R$ consists of a single $\F_q$-rational point by Lemma \ref{lem: geometric properties of CRs}, we have 
\[
\# C_R(\F_q)= q+(p-1)p^e\sqrt{q}. 
\]
Hence
\[
\#A=\frac{q}{p}+(p-1)p^{e-1}\sqrt{q}.
\]
Since $n>2e$, we have
\[
\#A>\frac{q}{p}\ge p^{2e}=\#V_R.
\]
Therefore, $A\setminus V_R$ is nonempty.

Assume now that $\overline{C}_R$ is $\F_q$-minimal. In this case, 
\[
\# C_R(\F_q)= q-(p-1)p^e\sqrt{q}, 
\]
and hence 
\[
\# A=p^{-1}\sqrt{q}\cdot(\sqrt{q}-(p-1)p^e). 
\]
Since $n>2e+2$, we have $\sqrt{q}-(p-1)p^e>0$. Thus, 
\[
\# A>p^{-1}p^{e+1}\cdot(p^{e+1}-(p-1)p^e)=p^{2e}. 
\]
Thus $\#A>\#V_R$, and consequently $A\setminus V_R$ is nonempty. 
\end{proof}

The following lemma is useful to construct a quotient morphism between van der Geer--van der Vlugt curves. 
\begin{lemma}\label{abc}
Let $h,\, R_1,\, R_2 \in \F_q[x]$ be  
$\F_p$-linearized polynomials 
such that 
\[x R_1(x) \sim h(x) R_2(h(x)). \]
Then there exists a finite  \'etale  surjective morphism $C_{R_1} \to C_{R_2}$ defined over $\F_q$. 
 \end{lemma}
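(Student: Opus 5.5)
The plan is to construct the morphism explicitly, using the hypothesis to rewrite the Artin--Schreier equation. Write
\[
xR_1(x)-h(x)R_2(h(x))=g(x)^p-g(x)
\]
for some $g\in \F_q[x]$. A priori the hypothesis $\sim$ is meant in $\F_q[x]$. If it is only given in the perfection, Lemma~\ref{lem: basic property of sim}(i) lets us pass back to $\F_q[x]$, so we may take $g\in\F_q[x]$. Define
\[
\phi\colon C_{R_1}\to C_{R_2},\qquad (x,y)\mapsto \bigl(h(x),\,y-g(x)\bigr).
\]
This is well defined: if $y^p-y=xR_1(x)$, then
\[
(y-g)^p-(y-g)=xR_1(x)-\bigl(g^p-g\bigr)=h(x)R_2(h(x)).
\]
It is clearly defined over $\F_q$.

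Next, I will show $\phi$ is finite étale by comparing it with the two projections $\pi_i\colon C_{R_i}\to\A^1$, $(x,y)\mapsto x$. Each $\pi_i$ is a finite étale $\F_p$-torsor. We have $\pi_2\circ\phi=h\circ\pi_1$. The key input is that $h\colon\A^1\to\A^1$ is finite étale, since $h'(x)$ equals the constant coefficient of $h$.

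Here the main obstacle appears: we need that constant coefficient to be nonzero, i.e.\ $h$ separable. If $h$ were inseparable, say $h=h_0\circ x^{p}$, one could instead factor through Frobenius. But then the resulting map is not étale, so separability of $h$ seems necessary. I would try to deduce it from the hypothesis by comparing degrees. Alternatively, I would note that in the applications $h$ is separable, and treat $h$ as assumed separable, i.e.\ nonzero with $h'\neq0$.

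Granting this, the fiber product $C_{R_2}\times_{\A^1,h}\A^1$ is finite étale over $C_{R_2}$. It is the $\F_p$-torsor $y^p-y=h(x)R_2(h(x))$ over $\A^1$, which is isomorphic over $\A^1$ to $C_{R_1}$ via $(x,y)\mapsto(x,y-g(x))$. Under this identification $\phi$ becomes the base-change projection, hence it is finite étale.

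Finally, surjectivity. A finite étale morphism is both finite and open, so its image is open and closed. Since $C_{R_2}$ is geometrically connected by Lemma~\ref{lem: geometric properties of CRs} and $C_{R_1}$ is nonempty, the image is all of $C_{R_2}$, so $\phi$ is surjective.
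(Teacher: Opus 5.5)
Your argument is the paper's: the paper writes $h(x)R_2(h(x))=xR_1(x)+g(x)^p-g(x)$ and uses $(x,y)\mapsto(h(x),y+g(x))$, which is your map up to the sign of $g$. The paper calls finiteness, \'etaleness and surjectivity ``straightforward''. Your identification of $C_{R_1}$ with $C_{R_2}\times_{\A^1,h}\A^1$, which makes $\phi$ a base change of $h\colon\A^1\to\A^1$, and your open-and-closed-image argument for surjectivity supply exactly those details. One small slip: $h'$ is the coefficient of $x$ in $h$, not a ``constant coefficient''.

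Your worry about separability is justified, but it cannot be settled by comparing degrees, because the hypothesis does not force $h$ to be separable. Take $h(x)=x^p$, $R_2(x)=\sum_i c_ix^{p^i}$ and $R_1(x)=\sum_i c_i^{1/p}x^{p^i}$. Then $h(x)R_2(h(x))=(xR_1(x))^p$. So the hypothesis holds with $g=-xR_1(x)$ in your normalization, and $\deg R_1=\deg R_2$. Yet on $C_{R_1}$ one has $y+xR_1(x)=y^p$, so the map you (and the paper) build is $(x,y)\mapsto(x^p,y^p)$. This is the relative $p$-power Frobenius, which is purely inseparable and not \'etale.

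So the lemma, with its one-line proof, should carry the extra hypothesis that $h$ is separable, and your fallback of assuming it is the right fix. This costs nothing in the only application, the proof of Theorem~\ref{exist12}. There $F_{W'}=F_W\circ h$, and the coefficient of $x$ in a composite of $\F_p$-linearized polynomials is the product of the coefficients of $x$ in the factors. Both $F_W$ and $F_{W'}$ have coefficient of $x$ equal to $1$: it is a $p$-power root of the leading coefficient of the monic polynomial $f_W$, respectively $f_{W'}$. Hence $h$ also has coefficient of $x$ equal to $1$, and is separable.
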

 \begin{proof}
 By definition, there exists $g(x) \in \F_q[x]$ such 
that 
\[
h(x) R_2(h(x))=x R_1(x)+g(x)^p-g(x). 
\]
Hence we obtain a morphism 
\[
C_{R_1} \to C_{R_2},\ (x,y) \mapsto (h(x),\, y+g(x)). 
\]
It is straightforward to check that this morphism is finite, \'etale, and surjective. 
\end{proof}

\subsubsection{The case where $p$ is odd}
In this case, we may assume that $C_R=D_{\rho}$ by Theorem \ref{t1}(ii), where $\rho$ is a pair $(W,a)$ of an $\F_p$-subspace $W\subset\F_q$ and an element 
$a\in \F_q^\times$  such that 
\[
\Tr_{q/p}(a^{-1}v^2)=0\qquad\text{for all $v\in W$.}
\]
Note that 
\begin{equation}\label{rr}
R+R^\ast=F_W^\ast \circ (aF_W).
\end{equation}

By Lemma~\ref{lemma: existence of u}, we may choose an element $u\in A\setminus V_R$. 
Set 
\[
W'\coloneqq W+\langle aF_W(u)\rangle_{\F_p}.
\]
Since $u\notin V_R=\ker (F_W^\ast \circ (a F_W))$ and $W=\ker F_W^\ast$, we have $a F_W(u) \not\in W$. 
Hence $\dim_{\F_p} W'=e+1$. 

Set $\rho':=(W',a)$. We show that $\overline{D}_{\rho'}$ is $\F_q$-extremal by verifying the condition in
Theorem~\ref{t1}(i). 
By \eqref{rr},  
\begin{align*}
\Tr_{q/p}(a^{-1} \cdot (aF_W(u))^2) &= \Tr_{q/p}(F_W(u) \cdot aF_W(u))
=\Tr_{q/p}(u  F_W^\ast(aF_W(u))) \\
&= \Tr_{q/p}(u(R+R^\ast)(u))=0. 
\end{align*}
Moreover, for every $v\in W=\ker F_W^\ast$, 
\[
\Tr_{q/p}(a^{-1} \cdot (aF_W(u)v))=\Tr_{q/p}(uF_W^\ast(v))=0. 
\]
This shows that
\[
\Tr_{q/p}(a^{-1}w^2)=0
\qquad\text{for all }w\in W'.
\]

Hence $\overline{D}_{\rho'}$ is $\F_q$-extremal by
Theorem~\ref{t1}(i). Moreover, since $a$ is unchanged, the criterion in
Theorem~\ref{t1}(i) shows that $\overline{D}_{\rho'}$ is $\F_q$-maximal
(resp.\ $\F_q$-minimal) if and only if $\overline{D}_{\rho}$ is
$\F_q$-maximal (resp.\ $\F_q$-minimal). 

\medskip

It remains to show that $D_{\rho}$ is a quotient of $D_{\rho'}$. Since $W\subset W'$, we may choose an $\F_p$-linearized polynomial 
$h$ such that
\[
F_{W'}=F_W\circ h.
\]
By Lemma~\ref{lem: Fast aF=R+Rast}(ii), we have 
\[
2xR(x)=2xR_\rho(x)\sim aF_W(x)^2. 
\]
Substituting $h(x)$ for $x$ and using $F_{W'}=F_W\circ h$, we obtain 
\begin{equation}\label{hRhR'}
    2h(x)R_\rho(h(x))\sim aF_W(h(x))^2=aF_{W'}(x)^2\sim 2x R_{\rho'}(x). 
\end{equation}

By~\eqref{hRhR'} and Lemma \ref{abc}, we obtain  a finite \'etale surjective morphism 
\[
D_{\rho'} \longrightarrow D_\rho. 
\]
Hence $D_\rho$ is a quotient of $D_{\rho'}$, as claimed. 

\subsubsection{The case where $p$ is even}\label{import}
In this case, by Theorem~\ref{recipe for ex curve intro}(ii), we may assume that
$C_R\cong D_\epsilon$, where $\epsilon=(W,t)$ consists of an
$\F_p$-subspace $W\subset \F_q$ containing $1$ and an element
$t\in\F_q$ satisfying
\[
Q_q(v)=\psi_q(tv)\qquad\text{for all }v\in W.
\]
Assume that $R=R_\epsilon$.
Then
\[
R+R^\ast=F_W^\ast\circ F_W.
\]

By Lemma~\ref{lemma: existence of u}, we may choose an element $u\in A\setminus V_R$. 
Set \[
W'\coloneqq W+\langle{F_W(u)}\rangle_{\F_p}.
\]
Since $u\notin V_R=\ker (F_W^\ast \circ F_W)$ and 
$W=\ker F_W^\ast$, we have 
$F_W(u) \not\in W$. Hence 
$\dim_{\F_p} W'=e+1$.

Set $\epsilon':=(W',t)$. We show that $\overline{D}_{\epsilon'}$ is $\F_q$-extremal by verifying the condition in
Theorem~\ref{recipe for ex curve intro}(i). We need the following lemma. 
\begin{lemma}\label{lemma: F,tF sim  xRx}
The following statements hold. 
\begin{enumerate}
    \item We use the notation in Lemma~\ref{lem: Fast F=R+Rast}. In the ring $W_2(\F_q[x])$, we have 
    \[
    \left(F_W(x),(tF_W(x))^2\right)\sim (0,xR(x)). 
    \]
    \item For every $x\in \F_q$, we have 
    \[
Q_q(F_W(x))\psi_q(tF_W(x))^{-1}=\psi_q(xR(x)). 
    \]
\end{enumerate}
\end{lemma}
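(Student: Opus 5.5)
We take $R=R_\epsilon=R_0+(\gamma_{F_W}+F_W^\ast(t)^2)x$, where $R_0(x)=\sum_{i\ge1}a_ix^{p^i}$ is the polynomial of Lemma~\ref{lem: Fast F=R+Rast}(i) for $F=F_W$; this is the $R$ fixed above by the assumption $R=R_\epsilon$. The plan is to deduce (i) from Lemma~\ref{lem: Fast F=R+Rast}(ii) by adding a Witt vector of the form $(0,c)$ to both sides. Then (ii) follows from (i) by evaluating at points of $\F_q$ and applying $\xi_q$.

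For (i), we first use the addition law in $W_2$: $(F_W(x),0)+(0,c)=(F_W(x),c)$ for any $c$. Combining this with Lemma~\ref{lem: Fast F=R+Rast}(ii) gives
\[
(F_W(x),(tF_W(x))^2)\sim\bigl(0,\,xR_0(x)+\gamma_{F_W}x^2+(tF_W(x))^2\bigr).
\]
We then compare second components of vectors with first component $0$. If $c\sim c'$ in $\F_q[x]$ with $c-c'=h^p-h$, then $(0,c)-(0,c')=(0,h^p-h)=(0,h^p)-(0,h)$. Hence $(0,c)\sim(0,c')$ in $W_2(\F_q[x])$, and it suffices to show
\[
(tF_W(x))^2\sim_{\F_q[x]} F_W^\ast(t)^2x^2 .
\]
By Lemma~\ref{lem: basic property of sim}(ii), applied with the roles of the two variables specialised to $t$ and $x$, we have $tF_W(x)\sim F_W^\ast(t)x$ in the perfection of $\F_q[x]$. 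Squaring preserves $\sim$, since $(h^p-h)^2=(h^2)^p-h^2$ in characteristic $2$. Both sides lie in $\F_q[x]$, so Lemma~\ref{lem: basic property of sim}(i) brings the relation down to $\F_q[x]$. Finally,
\[
xR_0(x)+\gamma_{F_W}x^2+F_W^\ast(t)^2x^2=xR_\epsilon(x),
\]
which proves (i).

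For (ii), we specialise the relation in (i) at a point $x\in\F_q$, which gives a relation in $W_2(\F_q)$. We then apply $\xi_q$. A Witt vector of the form $(a^p,b^p)-(a,b)$ has the same trace as its Frobenius conjugate, because $\Tr_{q/2}$ sums over all Frobenius conjugates. Hence $\xi_q$ kills such differences, and the relation in (i) becomes
\[
\xi_q\bigl(F_W(x),(tF_W(x))^2\bigr)=\psi_q(xR(x)).
\]
On the left-hand side, $\xi_q$ is a character of the additive group $W_2(\F_q)$, so
\[
\xi_q\bigl((F_W(x),0)+(0,(tF_W(x))^2)\bigr)=Q_q(F_W(x))\,\psi_q\bigl((tF_W(x))^2\bigr).
\]
Since $\Tr(b^2)=\Tr(b)$, we have $\psi_q(b^2)=\psi_q(b)$. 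Moreover $\psi_q$ takes values in $\{\pm1\}$, so $\psi_q(tF_W(x))=\psi_q(tF_W(x))^{-1}$, and the identity in (ii) follows.

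The main point requiring care is the passage between $\sim$ in $W_2$ and $\sim$ on second components, together with the perfection step. This is handled by the observation that only vectors of the form $(0,c)$ with $c\in\F_q[x]$ are involved.
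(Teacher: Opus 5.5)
Your proposal is correct and follows the paper's proof. You add $(0,(tF_W(x))^2)$ to the relation of Lemma~\ref{lem: Fast F=R+Rast}(ii), replace $(tF_W(x))^2$ by $F_W^\ast(t)^2x^2$ via Lemma~\ref{lem: basic property of sim}, and apply $\xi_q$ using $\psi_q(b^2)=\psi_q(b)=\psi_q(b)^{-1}$; along the way you fill in details the paper leaves implicit, namely compatibility of $\sim$ with addition and squaring, descent from the perfection, and why $\xi_q$ kills $(a^p,b^p)-(a,b)$.
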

\begin{proof}
    (i) By Lemma~\ref{lem: Fast F=R+Rast}(ii), we have 
\[
(F_W(x),0)\sim\bigl(0,x(R(x)-F_W^\ast(t)^2x)\bigr). 
\]

Since $(0,(F_W^\ast(t) x)^2) \sim (0,(t F_W(x))^2)$, the assertion follows. 

\noindent
(ii) By~(i), we have 
\[
Q_q(F_W(x))\psi_q((tF_W(x))^2)=\psi_q(xR(x)).
\]
Since $\psi_q((tF_W(x))^2)=\psi_q(tF_W(x))^{-1}$, the assertion follows. 
\end{proof}

We show that 
\begin{equation}\label{equation: Qqw}
Q_q(w)\psi_q(tw)^{-1}=1\qquad\text{for all }w\in W'. 
\end{equation}
Since $W'=F_W(V_R+\langle u\rangle_{\F_p})$, we may write $w=F_W(x+au)$ with $x\in V_R$ and $a\in \F_p$. Then, by Lemma~\ref{lemma: F,tF sim  xRx}(ii), 
\begin{align*} Q_q(w)\psi_q(tw)^{-1}&=\psi_q((x+au)R(x+au))\\&=\psi_q(xR(x))\cdot\psi_q(xR(au)+auR(x))\cdot\psi_q(a^2uR(u)).
\end{align*}
We claim that each factor on the right-hand side is equal to $1$. 
Since $F_W(x)\in W$, we have 
\[
Q_q(F_W(x))=\psi_q(tF_W(x)). 
\]
By Lemma~\ref{lemma: F,tF sim  xRx}(ii), this implies that  $\psi_q(xR(x))=1$.

For the second factor, by $x R(au) \sim au R^\ast(x)$,  we have
\[
\psi_q(xR(au)+auR(x))
=
\psi_q\bigl(au(R(x)+R^\ast(x))\bigr)
=
1.
\]
Finally, since $a \in \F_p$ and $u\in A$, we have $\psi_q(a^2uR(u))=1$.

Consequently, \eqref{equation: Qqw} holds, and hence $\overline{D}_{\epsilon'}$ is $\F_q$-extremal. Moreover, since $t$ is unchanged, the criterion in
Theorem~\ref{recipe for ex curve intro}(i) shows that $\overline{D}_{\epsilon'}$ is $\F_q$-maximal
(resp.\ $\F_q$-minimal) if and only if $\overline{D}_{\epsilon}$ is
$\F_q$-maximal (resp.\ $\F_q$-minimal). 

\medskip

It remains to show that $D_{\epsilon}$ is a quotient of $D_{\epsilon'}$.  Since $W\subset W'$, there exists an $\F_p$-linearized polynomial $h\in\F_q[x]$ such that 
\[F_{W'}=F_W\circ h. \]
Substituting $h(x)$ for $x$ in Lemma~\ref{lemma: F,tF sim  xRx}(i), we obtain 
\[
(0,h(x)R(h(x)))\sim \left(F_{W'}(x),(tF_{W'}(x))^2\right)\sim (0,xR_{\epsilon'}(x)). 
\]
Hence, there exists $(c,d)\in W_2(\F_q[x])$ satisfying 
\[
(0,h(x)R(h(x))-xR_{\epsilon'}(x))=(c^p,d^p)-(c,d)=(c^p-c,\, d^p-d-c(c^p-c)). 
\]
This shows that $c\in \F_p$ and 
\[
h(x)R(h(x)) - xR_{\epsilon'}(x)=d^p-d. 
\]
By Lemma~\ref{abc}, 
there exists a finite \'etale surjective morphism 
\[
D_{\epsilon'} \longrightarrow
D_{\epsilon}.
\]
This completes the proof of Theorem~\ref{exist12}.

\begin{remark}\label{remark: e>n does not occur}
Let 
\[
R(x)=\sum_{i=0}^ea_ix^{p^i}\in\F_q[x]
\]
  be an $\F_p$-linearized polynomial with $a_e\neq0$. It is well known that $\overline{C}_R$ cannot be $\F_q$-extremal
if $2e>[\F_q:\F_p]$. 

In this remark, we give a proof of this fact using the results in Section~3. We treat the case where $p_0$ is odd; the case $p_0=2$ is similar. 

Assume that $\overline{C}_R$ is $\F_q$-extremal. Then, by Theorem~\ref{t1}(ii), there exists a pair $\rho=(W,a)$ such that $R=R_\rho$. Therefore,
\[
p^e=\deg R_\rho=\#W.
\]
On the other hand, by Theorem~\ref{t1}(i), $W$ is a subspace of $\F_q$ that is totally isotropic with respect to the bilinear form $\Tr_{q/p}(a^{-1}xy)$. Since this bilinear form is nondegenerate, we must have 
\[
\dim_{\F_p}W\le \frac{1}{2}[\F_q:\F_p],\]
and hence $2e\le [\F_q:\F_p]$. 
\end{remark}

\subsection{Existence of maximal and minimal curves}

The following theorem was proved in \cite[Theorem~5.9]{CO0}. 

\begin{theorem}\label{theorem: existence of extremal curves}
Assume that $n=[\F_q:\F_p]$ is even. 
Then the following statements hold. 
\begin{enumerate}
    \item Let $e$ be an integer satisfying 
    \[
\begin{cases}
0 \le e \le n/2 & \textrm{if $p$ is odd}, \\
1 \le e \le n/2 &\textrm{if $p$ is even}. 
\end{cases}
\]
Then there exists an $\F_p$-linearized polynomial $R(x) \in \F_q[x]$ of degree $p^e$ such that the curve $\overline{C}_R$ is $\F_q$-maximal. 
    \item Let $e$ be an integer satisfying 
    \[
\begin{cases}
0 \le e \le (n/2)-1 & \textrm{if $p$ is odd}, \\
1 \le e \le (n/2)-1 &\textrm{if $p$ is even}. 
\end{cases}
\]
Then there exists an $\F_p$-linearized polynomial $R(x) \in \F_q[x]$ of degree $p^e$ such that the curve $\overline{C}_R$ is $\F_q$-minimal. 
\end{enumerate}
\end{theorem}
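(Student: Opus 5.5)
The plan is to induct on $e$, using Theorem~\ref{exist12} as the inductive step. All that is needed is a base curve of the smallest admissible degree that is $\F_q$-maximal, and one that is $\F_q$-minimal.

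\emph{Odd $p$, base case $e=0$.} Take $W=0$, so $F_W=x$ and $R_\rho=(a/2)x$. The condition in Theorem~\ref{t1}(i) is vacuous, so $\overline{D}_{(0,a)}$ is extremal for every $a\in\F_q^\times$. Its type is governed by the sign $\bigl(\frac{a}{q}\bigr)\bigl(\frac{-1}{\sqrt q}\bigr)$, and since $a$ can be a square or a non-square, both signs occur. So we get a maximal curve and a minimal curve of degree $p^0$.

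\emph{Odd $p$, inductive step.} Apply Theorem~\ref{exist12}(i) repeatedly, raising $e$ by one each time while $e<n/2$; this reaches every maximal degree $e\le n/2$. In the same way, Theorem~\ref{exist12}(ii) applies while $e<n/2-1$ and reaches every minimal degree $e\le n/2-1$. One could also argue directly, taking $W\subset\eta\F_{\sqrt q}$ of any dimension up to $n/2$ as in Corollary~\ref{maxc}.

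\emph{Even $p$, base case $e=1$.} Take $W=\F_p$, which contains $1$, and look for $t\in\F_q$ satisfying the condition of Theorem~\ref{recipe for ex curve intro}(i). We need $Q_q(v)=\psi_q(tv)$ for all $v\in\F_p$ and, in addition, $Q_q(t)=\mp(\sqrt{-1})^{[\F_q:\F_2]/2}$.

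\begin{itemize}
\item First check that $v\mapsto Q_q(v)$ is a homomorphism on $\F_p$. This follows from $\psi_q(vw)=\psi(\Tr(vw))$ together with $\Tr_{q/p}$ vanishing on $\F_p$, because $n$ is even.
\item This homomorphism is a character $\F_p\to\{\pm1\}$, so it has the form $v\mapsto\psi_q(t_0v)$ for some $t_0$.
\item The solution set is then $t_0+W^\perp$, and we need both values of $Q_q(t)$ to occur on it.
\end{itemize}

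By the identity $Q_q(t_0+s)=Q_q(t_0)Q_q(s)\psi_q(t_0s)$, this reduces to showing that $Q_q(s)\psi_q(t_0s)$ takes both required values as $s$ ranges over the hyperplane $W^\perp$. I would count with Gauss-type sums: $\sum_{s\in W^\perp}Q_q(s)\psi_q(t_0s)$ has absolute value much smaller than $\#W^\perp$ once $n\ge2$, so both values appear. Alternatively, cite \cite[Corollary 6.7]{ITT1} for the maximal case.

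\emph{Even $p$, inductive step.} With the base case in hand, Theorem~\ref{exist12} propagates it from $e=1$ up to $n/2$ in the maximal case and up to $n/2-1$ in the minimal case.

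The main obstacle I expect is this characteristic-two base case, in particular the minimal case. There the exact sign bookkeeping with $Q_q$ is delicate, and I would verify it by direct evaluation of the quadratic Witt-vector Gauss sum over $W^\perp$.
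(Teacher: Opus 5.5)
Your plan is correct and has the same structure as the paper's proof. The paper also combines base cases (Lemma~\ref{key}) with the inductive step Theorem~\ref{exist12}, and your odd base case $(W,a)=(\{0\},a)$ is exactly Lemma~\ref{key}(i).

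In characteristic two the paper also takes $W=\F_p$. For the maximal curve it chooses an explicit $t$ with $t^{\sqrt{q}}+t=1$ and computes $Q_q(t)$ directly. For the minimal curve it looks for $u$ with $Q_q(t+F_W(u))=-Q_q(t)$, arguing by contradiction with the point count of that maximal curve. Since $F_W(\F_q)=W^\perp$ by Lemma~\ref{exact}, the paper searches the same coset as you, and its point-count contradiction is your Gauss-sum estimate in another form. Your version does not need a maximal curve first; the paper's version avoids evaluating any Gauss sum.

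One correction: your estimate is not strict for every even $n$. Expand the indicator of $W^\perp$ by orthogonality, and use $Q_q(t_0+v)=Q_q(t_0)Q_q(v)\psi_q(t_0v)=Q_q(t_0)$ for $v\in\F_p$. This gives exactly
\[
\sum_{s\in W^\perp}Q_q(s)\psi_q(t_0s)=Q_q(t_0)^{-1}\sum_{s\in\F_q}Q_q(s).
\]
The right-hand side has absolute value $\sqrt{q}=p^{n/2}$, while $\#W^\perp=p^{n-1}$. So both values occur only when $n\ge 4$. When $n=2$, the function $Q_q$ is constant on $t_0+W^\perp$.

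This does no harm to the minimal case, because for $n=2$ the minimal range $1\le e\le (n/2)-1$ is empty. However, the maximal base case at $n=2$ does not follow from your counting argument alone. There you need the explicit choice $t^{\sqrt{q}}+t=1$, or your citation of \cite[Corollary 6.7]{ITT1}.
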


By Theorem~\ref{exist12}, the above theorem follows from the following lemma. 
\begin{lemma}\label{key}
The following statements hold. 
\begin{itemize}
\item[{\rm (i)}] 
Assume that $p$ is odd. 
Let $R(x)=ax$. Then $\overline{C}_{R}$ is 
$\F_q$-maximal (resp.\ $\F_q$-minimal) if and only if
\[
\left(\frac{a}{q}\right) \left(\frac{-1}{\sqrt{q}}\right)=-1 \quad (resp.\ \left(\frac{a}{q}\right) \left(\frac{-1}{\sqrt{q}}\right)=1). 
\]
\item[{\rm (ii)}] Assume that $p$ is even. 
Let $R_a(x)=x^p+ax$ for $a \in \F_q$. Then the family of curves 
$\{\overline{C}_{R_a}\}_{a \in \F_q}$ contains an $\F_q$-maximal curve. 
Furthermore, if $[\F_q:\F_p]>2$, then this family also contains an $\F_q$-minimal curve. 
\end{itemize}
\end{lemma}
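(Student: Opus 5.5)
For part (i), I will reduce to Theorem~\ref{t1}(i) using the pair $\rho=(\{0\},2a)$. If $W=\{0\}$, then $e=0$ and $f_W(x)=x$, so $F_W(x)=x$. Hence $F_W^\ast\circ(2aF_W)=2ax$. By the construction in Lemma~\ref{lem: Fast aF=R+Rast}(i), the unique $R$ with $R+R^\ast=2ax$ is $R(x)=ax$ (here $R^\ast=R$). Therefore $R_\rho=ax$ and $C_R=D_\rho$.

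The trace condition of Theorem~\ref{t1}(i) holds trivially, since $W=\{0\}$. So $\overline{C}_R$ is always $\F_q$-extremal. It is maximal exactly when $\left(\frac{2a}{q}\right)\left(\frac{-1}{\sqrt q}\right)=-1$, and minimal in the opposite case. Since $n$ is even, $2\in\F_p\subset\F_{\sqrt q}$, and every element of $\F_{\sqrt q}$ is a square in $\F_q$. Thus $\left(\frac{2}{q}\right)=1$ and $\left(\frac{2a}{q}\right)=\left(\frac{a}{q}\right)$, which gives (i). As a cross-check, one could compute directly $\sum_x\psi_q(ax^2)=-\left(\frac aq\right)G_\psi^n$ as in the proof of Theorem~\ref{lp}.

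For part (ii), I will use Theorem~\ref{recipe for ex curve intro} with $W=\F_p=\langle 1\rangle$, so $e=1$. Here $f_W(x)=x^p-x$, so $F_W(x)=x^p-x^{1/p}\cdot{}$\ldots. More carefully, $f_W^\ast(x)=x^{1/p}-x$, so $F_W(x)=f_W^\ast(x^p)=x-x^p$. We get $F_W^\ast F_W=(x-x^{1/p})\circ(x-x^p)=x-x^p-x^{1/p}+x$. In characteristic $2$ this equals $x^p+x^{1/p}$, so $R=x^p$. Also $\gamma_{F_W}=1\cdot 1=1$ (signs are irrelevant in characteristic $2$) and $F_W^\ast(t)^2=(t+t^{1/p})^2$. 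Hence $R_\epsilon=x^p+\bigl(1+(t+t^{1/p})^2\bigr)x$.

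The first step is to show that every $a\in\F_q$ has the form $1+(t+t^{1/p})^2$, possibly up to the rescaling in Theorem~\ref{recipe for ex curve intro}(ii). I expect this to fail: the image of $t\mapsto t+t^{1/p}$ is only the kernel of $\Tr_{q/p}$. So I only need that the curves $D_\epsilon$, $t\in\F_q$, all lie in the family $\{C_{R_a}\}$. That is the case, because $R_\epsilon=R_a$ with $a=1+(t+t^{1/p})^2$.

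The extremality criterion is then $Q_q(v)=\psi_q(tv)$ for $v\in\F_p$, which reduces to the single condition $Q_q(1)=\psi_q(t)$. The type of the curve is then read off from $Q_q(t)=\mp(\sqrt{-1})^{[\F_q:\F_2]/2}$.

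The main obstacle is producing $t$ with $\psi_q(t)=Q_q(1)$ and a prescribed value of $Q_q(t)$. I plan to handle this with a counting argument. The value $Q_q(1)=\xi_2(\Tr_{q/2}(1,0))$ is a fixed sign, since $[\F_q:\F_2]$ is even. Condition (a) cuts out a coset $H$ of index $2$ in $\F_q$, viewing $\psi_q(t)=(-1)^{\Tr_{q/2}(t)}$. I will then show that $Q_q$ restricted to $H$ takes both values $\pm(\sqrt{-1})^{[\F_q:\F_2]/2}$. The values of $Q_q$ on $H$ are fourth roots of unity, and their squares equal $\psi_q(t)$, which is constant on $H$; so on $H$ the values of $Q_q$ lie in a single pair $\pm\zeta$.

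To see that both signs occur, I will estimate $\sum_{t\in H}Q_q(t)$ using the Gauss-sum magnitude $\bigl|\sum_{t\in\F_q}Q_q(t)\psi_q(ct)\bigr|=\sqrt{2q}$ or $\sqrt q$. This sum has absolute value at most about $\sqrt q<\#H=q/2$ once $q>4$, so both signs appear. The resulting unconditional guarantee is for the maximal sign. For the minimal sign, the hypothesis $[\F_q:\F_p]>2$ is needed, and checking which sign survives in the small cases is where I expect delicate bookkeeping.
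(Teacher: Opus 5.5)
Part (i) is correct and is the paper's argument. Your choice $\rho=(\{0\},2a)$ gives $R_\rho=ax$ on the nose, and together with $\left(\frac{2}{q}\right)=1$ it is if anything cleaner than the paper's $(\{0\},a)$.

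In part (ii), your identification $R_\epsilon=R_{a}$ with $a=1+(t+t^{1/p})^2$ is also right, but the counting argument has a genuine gap. Here $p$ is an arbitrary power of $2$. For $W=\F_p$, the criterion of Theorem~\ref{recipe for ex curve intro}(i) reduces to the single condition $Q_q(1)=\psi_q(t)$ only when $p=2$. The map $v\mapsto Q_q(v)\psi_q(tv)^{-1}$ is a homomorphism on $\F_p$, because $\Tr_{q/2}(vw)=\Tr_{p/2}\bigl([\F_q:\F_p]\,vw\bigr)=0$ for $v,w\in\F_p$. But it must be trivial on a whole $\F_2$-basis of $\F_p$. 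So the admissible $t$ form a coset $H$ of $\ker\Tr_{q/p}=F_W(\F_q)$, of size $q/p$, not $q/2$. With your index-$2$ set, the count would produce minimal curves as soon as $q>4$, and that is false. For $p=4$, $q=16$, the curve $\overline{C}_{R_a}$ has genus $6$, so a minimal one would have $17-2\cdot 6\cdot 4<0$ rational points. This wrong index, not small-case bookkeeping, is why your conclusion did not match the hypothesis $[\F_q:\F_p]>2$.

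Second, even with the correct $H$, the magnitude of the sum alone cannot decide which sign survives when only one does. (That magnitude is exactly $\sqrt q$ for every $c$, since $\sum_{t}Q_q(t)\psi_q(ct)=Q_q(c)^{-1}\sum_{t}Q_q(t)$.) Only one sign occurs precisely when $[\F_q:\F_p]=2$. So, as it stands, you have not shown that a maximal curve exists in that case.

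What you need is the sign of the Gauss sum: $\sum_{t\in\F_q}Q_q(t)=-\sqrt q\,(\sqrt{-1})^{[\F_q:\F_2]/2}$. This is the Hasse--Davenport lift behind the factor $(-1-\sqrt{-1})^{[\F_q:\F_2]}$ in Theorem~\ref{twist}. Since $Q_q(v)^2=\psi_q(v)=1$ for $v\in\F_p$, one checks that $\sum_{t\in H}Q_q(t)=\sum_{t\in\F_q}Q_q(t)$. Hence the number of $t\in H$ giving maximal curves is $(q/p+\sqrt q)/2$, and the number giving minimal curves is $(q/p-\sqrt q)/2$. The latter is positive iff $[\F_q:\F_p]>2$.

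The paper avoids Gauss sums entirely. It takes $t$ with $t^{\sqrt q}+t=1$, verifies the criterion using $\F_p\subset\F_{\sqrt q}$, and computes $Q_q(t)=-(\sqrt{-1})^{[\F_q:\F_2]/2}$ directly. For the minimal case it replaces $t$ by $t+F_W(u)$. This preserves the criterion because $W=\ker F_W^\ast$, and it multiplies $Q_q(t)$ by $\psi_q(uR_\epsilon(u))$. If this factor were $1$ for all $u$, the maximal curve $\overline{D}_\epsilon$ would have $pq$ affine points instead of $q+p(p-1)\sqrt q$, which is impossible when $[\F_q:\F_p]>2$.
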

\begin{proof}
(i) The assertion follows from 
Theorem~\ref{t1} applied 
to $(W,a)=(\{0\},a)$. 

\noindent
(ii) Set $W := \F_p$, and 
let $t \in \F_q$ satisfy $t^{\sqrt{q}} + t = 1$. Set $\epsilon:=(W,t)$. 

Since $W\subset\F_{\sqrt{q}}$, for every $v\in W$, we have
\[
Q_q(v)=\psi_{\sqrt{q}}(v)=\psi_q(tv). 
\]
By Theorem~\ref{recipe for ex curve intro}(i), the curve $\overline{D}_{\epsilon}$ is $\F_q$-extremal. 
Let $m\coloneqq [\F_q:\F_p]/2$. 
A direct computation shows that 
\begin{align*}
Q_q(t)
&=Q_{\sqrt{q}}(t^{\sqrt{q}}+t,t^{\sqrt{q}+1})\\
&=Q_{\sqrt{q}}(1,t+t^2)\\
&=\xi_2\!\left(m(1,0)+(0,\Tr_{\sqrt{q}/2}(t^2+t))\right)\\
&=(\sqrt{-1})^{m}\cdot \xi_2(0,t^{\sqrt{q}}+t)\\
&=(\sqrt{-1})^{m}\cdot \xi_2(0,1)\\
&=-(\sqrt{-1})^{m}.
\end{align*}
Hence, by Theorem~\ref{recipe for ex curve intro}(i),
$\overline{D}_{\epsilon}$ is $\F_q$-maximal.

Assume now that $m>1$. 
We show that there exists $u \in \F_q$ such that 
\begin{equation}\label{qqt}
Q_q(t + F_W(u)) = - Q_q(t).
\end{equation}

Since $Q_q(x + y) = Q_q(x)\, Q_q(y)\, \psi_q(xy)$ for $x,y \in \F_q$, we compute 
\begin{align*}
Q_q(t + F_W(u))Q_q(t)^{-1}
&= Q_q(F_W(u))\, \psi_q(tF_W(u)) \\
&=  \psi_q(uR_{\epsilon}(u)), 
\end{align*}
where the last equality follows from Lemma~\ref{lemma: F,tF sim  xRx}(ii). 

Assume that $\psi_q\bigl(u R_{\epsilon}(u)\bigr) = 1$ for all $u \in \F_q$. Then, for every $a\in\F_p$, 
\[
\psi(a\Tr_{q/p}(u R_{\epsilon}(u)))=
\psi_q\bigl(au R_{\epsilon}(u)\bigr) =\psi_q\bigl(a^{1/2}u R_{\epsilon}(a^{1/2}u)\bigr) = 1. 
\]
Hence, 
\[
\Tr_{q/p}\bigl(u R_{\epsilon}(u)\bigr) = 0
\quad \text{for all } u \in \F_q.
\]
By Artin--Schreier theory, it follows that 
\[
\# D_{\epsilon}(\F_q) =\# \left\{(u,v) \in \F_q^2 \mid 
v^p-v=u R_{\epsilon}(u)\right\}=pq. 
\]
On the other hand, since $\overline{D}_\epsilon$ is $\F_q$-maximal and has genus $p(p-1)/2$, we have 
\[
\# D_{\epsilon}(\F_q) = q + p(p-1)p^m. 
\]
Since $m > 1$, we have $q + p(p-1)p^m\neq pq$, a contradiction. 

Therefore, there exists $u \in \F_q$ such that 
\[
Q_q(t + F_W(u)) = - Q_q(t).
\]
Using \eqref{qqt}, for $v \in W=\ker F_W^\ast$, 
\begin{align*}
\psi_q((t+F_W(u))v)&=\psi_q(tv) \psi_q(F_W(u) v)\\
&=\psi_q(tv)\psi_q(u F_W^\ast(v))=\psi_q(tv)=Q_q(v). 
\end{align*}
By Theorem~\ref{recipe for ex curve intro}(i), the curve $\overline{D}_{(W,t+F_W(u))}$ is $\F_q$-minimal. 
\end{proof}

\subsubsection{Quadratic forms and extremal curves}
Assume that $p$ is odd. By Theorem~\ref{t1}, totally isotropic
subspaces for the quadratic form
\[
x\mapsto \Tr_{q/p}(a^{-1}x^2)
\]
give rise to $\F_q$-extremal curves.  In this subsection, we give an alternative proof of Theorem~\ref{theorem: existence of extremal curves} from this viewpoint. 

Recall that a quadratic space over $\F_p$ is a finite-dimensional
$\F_p$-vector space $V$ equipped with a quadratic form
\[
Q:V\to\F_p.
\]
The associated symmetric bilinear form is 
\[B_Q(x,y):=2^{-1}(Q(x+y)-Q(x)-Q(y)). \]
We say that $Q$ is nondegenerate if 
 $B_Q$ is nondegenerate. 
In this case, 
the \emph{discriminant} of $Q$ is defined by 
\[
\disc(Q):=(-1)^{n(n-1)/2} \cdot \det B_Q \in \F_p^{\times}/(\F_p^{\times})^2,  
\]
where $n=\dim_{\F_p}V$ and $\det B_Q \in \F_p^{\times}/(\F_p^{\times})^2$ denotes the determinant of $B_Q$.
\begin{proposition}\label{split}
Assume that $n=[\F_q:\F_p]$ is even. 
Let $a \in \F_q^{\times}$ and let 
\[Q_a \colon \F_q \to \F_p,\ x \mapsto \Tr_{q/p}(a^{-1}x^2).\]
This is a nondegenerate quadratic form on $\F_q$. 
Then 
\[
\left(\frac{\disc(Q_a)}{p}\right)=-\left(\frac{a}{q}\right)\left(\frac{-1}{\sqrt{q}}\right). 
\]
The quadratic space $(\F_q,Q_a)$ is split if and only if 
$\disc(Q_a)=1$. 
\end{proposition}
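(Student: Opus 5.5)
Nondegeneracy is immediate: the bilinear form is $B_{Q_a}(x,y)=\Tr_{q/p}(a^{-1}xy)$, and the trace form on $\F_q$ is nondegenerate. For the discriminant formula I would not compute a Gram matrix directly. Instead I would use the classical evaluation of Gauss sums of quadratic forms over $\F_p$: for a nondegenerate quadratic form $Q$ of even dimension $n$ and $\psi\neq1$,
\[
\sum_{x\in V}\psi(2^{-1}Q(x))=\Bigl(\frac{\det B_Q}{p}\Bigr)\,g_\psi^{\,n},\qquad g_\psi:=\sum_{x\in\F_p}\psi(x^2)=-G_\psi .
\]
This follows by diagonalizing $Q$ and using $\sum_{x}\psi(cx^2)=\bigl(\frac{c}{p}\bigr)g_\psi$. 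Because $n$ is even, $g_\psi^n=\bigl(\bigl(\frac{-1}{p}\bigr)p\bigr)^{n/2}$. Replacing $\det$ by $\disc=(-1)^{n(n-1)/2}\det$ absorbs exactly the factor $\bigl(\frac{-1}{p}\bigr)^{n/2}$. So
\[
\sum_{x\in V}\psi(2^{-1}Q(x))=\Bigl(\frac{\disc Q}{p}\Bigr)p^{n/2},
\]
and the left side is then computed in a second way.

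The second computation is on the $\F_q$ side. We have $\sum_{x\in\F_q}\psi(2^{-1}Q_a(x))=\sum_x\psi_q(2^{-1}a^{-1}x^2)=\bigl(\frac{2a}{q}\bigr)\sum_x\psi_q(x^2)$. By the Hasse--Davenport relation, exactly as in the proof of Theorem~\ref{lp}, $\sum_x\psi_q(x^2)=-G_\psi^n$. In the proof of Theorem~\ref{t1}(i) it was shown that $G_\psi^n=\bigl(\frac{-1}{\sqrt q}\bigr)\sqrt q$ and $\bigl(\frac{2a}{q}\bigr)=\bigl(\frac aq\bigr)$. Hence the sum equals $-\bigl(\frac aq\bigr)\bigl(\frac{-1}{\sqrt q}\bigr)\sqrt q$. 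Comparing with the previous paragraph, and noting $p^{n/2}=\sqrt q$, gives the displayed formula. The main bookkeeping risk lies in the signs. These are the factor $-1$ from Hasse--Davenport versus the sign convention $G_\psi=-g_\psi$, and the $(-1)^{n(n-1)/2}$ versus $(\frac{-1}{p})^{n/2}$ matching. I would check them carefully, since $n(n-1)/2\equiv n/2 \pmod 2$ for even $n$.

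For the splitness claim, I would invoke the standard classification of nondegenerate quadratic forms over a finite field of odd characteristic: they are determined by dimension and discriminant class. An even-dimensional form is split, i.e.\ an orthogonal sum of hyperbolic planes, exactly when it is isometric to $H^{n/2}$. The hyperbolic plane $xy$ has Gram matrix with determinant $-1/4$, which lies in the class of $-1$. Therefore $\disc(H^{n/2})=(-1)^{n(n-1)/2}(-1)^{n/2}=1$ in $\F_p^\times/(\F_p^\times)^2$, since $n(n-1)/2\equiv n/2$. Consequently $(\F_q,Q_a)$ is split iff $\disc(Q_a)=1$. Alternatively, the Gauss-sum value $\pm\sqrt q$ already detects splitness, via the count of isotropic vectors.
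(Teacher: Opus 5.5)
Your proof is correct, but it does not follow the paper's route.

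\textbf{The paper's route.} The paper computes $\det B_{Q_1}$ directly, as an algebraic discriminant. Take a normal-basis generator $\alpha$. The Gram determinant of the trace form is then, up to squares, $D^2$ with $D=\prod_{i<j}(\alpha^{p^i}-\alpha^{p^j})$. Frobenius permutes the conjugates as an $n$-cycle, so $D^p=(-1)^{n-1}D=-D$. Hence $D^2$ is a non-square in $\F_p$ when $n$ is even. The dependence on $a$ is handled separately, via $\det B_{Q_a}=\Nr_{q/p}(a)^{-1}\det B_{Q_1}$ and $\bigl(\frac{\Nr_{q/p}(a)}{p}\bigr)=\bigl(\frac aq\bigr)$.

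\textbf{Your route.} You evaluate $\sum_{x\in\F_q}\psi(2^{-1}Q_a(x))$ in two ways.
\begin{enumerate}
\item Diagonalizing over $\F_p$ gives $\bigl(\frac{\disc Q_a}{p}\bigr)\sqrt q$.
\item Working over $\F_q$ with Hasse--Davenport gives $-\bigl(\frac aq\bigr)\bigl(\frac{-1}{\sqrt q}\bigr)\sqrt q$.
\end{enumerate}
Your sign bookkeeping is correct. The factor $2^{-n}$ from diagonalization disappears because $n$ is even. For even $n$ you also have $g_\psi^n=G_\psi^n$, $(-1)^{n(n-1)/2}=(-1)^{n/2}$, and $\bigl(\frac{2a}{q}\bigr)=\bigl(\frac aq\bigr)$.

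\textbf{What each route buys.} The paper's argument is purely algebraic: it is a Stickelberger-type parity statement for the discriminant of $\F_q/\F_p$, and it needs no characters. Yours reuses the Gauss-sum computation from the proofs of Theorems~\ref{lp} and~\ref{t1}(i), specifically the $v=0$ term. This makes it transparent why the same sign $\bigl(\frac aq\bigr)\bigl(\frac{-1}{\sqrt q}\bigr)$ governs both extremality and splitness. The cost is that you rely on Hasse--Davenport and on $g_\psi^2=\bigl(\frac{-1}{p}\bigr)p$.

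Your treatment of splitness matches the paper's: classification by dimension and discriminant class, together with $\disc(H^{n/2})=1$.
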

\begin{proof}
Let $\{\alpha,\ldots,\alpha^{p^{n-1}}\}$
be a normal basis of $\F_q/\F_p$. 
Then the discriminant of the field extension $\F_q/\F_p$ is 
\[
D^2\in \F_p^{\times}/(\F_p^{\times})^2, \qquad 
\textrm{where} \quad D:=\prod_{0 \le i<j \le n-1} 
(\alpha^{p^i}-\alpha^{p^j}). 
\]
By assumption, $n$ is even. 
Since $D^p=(-1)^{n-1}D=-D$, 
we have $\bigl(\frac{D^2}{p}\bigr)
=D^{p-1}=-1$. 
The discriminant of $Q_1$ is given by 
\[
\left(\frac{\disc(Q_1)}{p}\right)=\left(\frac{(-1)^{n(n-1)/2} D^2}{p}\right)=-\left(\frac{(-1)^{n/2}}{p}\right)=-\left(\frac{-1}{\sqrt{q}}\right). 
\]
Using the norm map $\Nr_{q/p} \colon \F_q^{\times} \to \F_p^{\times}$, we obtain
\[
\left(\frac{\disc(Q_a)}{p}\right)=
\left(\frac{\Nr_{q/p}(a)^{-1}\disc(Q_1)}{p}\right)
=-\left(\frac{a}{q}\right)\left(\frac{-1}{\sqrt{q}}\right). 
\]
The final assertion follows from the classification of
nondegenerate quadratic spaces over finite fields (see, e.g., \cite[Chapter~II, \S3, p.~30 and p.~36]{Lam}). 
\end{proof}
\begin{corollary}\label{exist}
Assume that $p$ is odd and that $n=[\F_q:\F_p]$ is even. 
\begin{itemize}
\item[{\rm (i)}] For any integer $0 \le e \le n/2$, 
there exists an $\F_q$-maximal curve in the family of van der Geer--van der Vlugt curves 
of genus $p^e(p-1)/2$.
\item[{\rm (ii)}] For any integer $0 \le e \le (n/2)-1$, 
there exists an $\F_q$-minimal curve in the same 
family 
of genus $p^e(p-1)/2$.
\end{itemize}
\end{corollary}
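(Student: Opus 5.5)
The plan is to combine Theorem~\ref{t1}(i) with Proposition~\ref{split} and the standard structure theory of quadratic spaces over $\F_p$. By Theorem~\ref{t1}(i), for a pair $\rho=(W,a)$ the curve $\overline{D}_\rho$ is $\F_q$-extremal exactly when $W$ is totally isotropic for $Q_a$. In that case it is maximal precisely when $\left(\frac{a}{q}\right)\left(\frac{-1}{\sqrt{q}}\right)=-1$. By Proposition~\ref{split}, this is equivalent to $\left(\frac{\disc(Q_a)}{p}\right)=1$, i.e.\ to $(\F_q,Q_a)$ being split. The curve is minimal precisely when $\disc(Q_a)$ is a nonsquare. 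Moreover, Lemma~\ref{lem: geometric properties of CRs} gives genus $(\deg R_\rho)(p-1)/2=\#W\cdot(p-1)/2=p^{e}(p-1)/2$ when $\dim_{\F_p}W=e$, since $\deg R_\rho=\deg F_W^\ast\circ(aF_W)$ restricted to positive powers equals $p^e$. So it suffices to produce, for each allowed $e$, an $a$ with the right discriminant class and a totally isotropic $e$-dimensional subspace $W\subset\F_q$ for $Q_a$. Note that total isotropy for the quadratic form $Q_a(w)=0$ for all $w$ is the same as for the bilinear form, since $p$ is odd.

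For (i), first choose $a$ with $\left(\frac{a}{q}\right)=-\left(\frac{-1}{\sqrt{q}}\right)$; this is possible since $a\mapsto\left(\frac{a}{q}\right)$ is surjective onto $\{\pm1\}$. Then $(\F_q,Q_a)$ is a split (hyperbolic) quadratic space of dimension $n$, hence has Witt index $n/2$. It therefore contains a totally isotropic subspace of dimension $n/2$, and any $e$-dimensional subspace $W$ of it with $0\le e\le n/2$ works. Alternatively, one can simply invoke Corollary~\ref{maxc} with $W$ any $e$-dimensional subspace of $\eta\,\F_{\sqrt q}$, which has dimension $n/2$; this gives (i) immediately.

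For (ii), choose $a$ with $\left(\frac{a}{q}\right)=\left(\frac{-1}{\sqrt{q}}\right)$. Then $\disc(Q_a)$ is a nonsquare, so by the classification of quadratic forms over finite fields, $(\F_q,Q_a)$ is nonsplit of even dimension $n$. It is the orthogonal sum of $(n/2-1)$ hyperbolic planes and an anisotropic plane, so its Witt index is $n/2-1$. Hence there exist totally isotropic subspaces of every dimension $0\le e\le n/2-1$, and Theorem~\ref{t1}(i) shows that $\overline{D}_{(W,a)}$ is minimal of the required genus.

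The only step requiring care is the Witt index computation in the nonsplit case: one must check that $n$-dimensional forms over $\F_p$ with nonsquare discriminant, in the convention $(-1)^{n(n-1)/2}\det$, are exactly the hyperbolic space of dimension $n-2$ plus the anisotropic plane. For this I would cite the same reference \cite{Lam} used in Proposition~\ref{split}.
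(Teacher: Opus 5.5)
Your proposal is correct and follows essentially the same route as the paper: choose $a$ according to the value of $\left(\frac{a}{q}\right)$, use Proposition~\ref{split} to see that $(\F_q,Q_a)$ is split (resp.\ nonsplit) with Witt index $n/2$ (resp.\ $n/2-1$), take an $e$-dimensional subspace $W$ of a maximal totally isotropic subspace, and apply Theorem~\ref{t1}(i). Your explicit check that $\deg R_\rho=p^e$, so that the genus is $p^e(p-1)/2$, is something the paper leaves implicit, and your alternative proof of (i) via Corollary~\ref{maxc} is also valid but is not used by the paper here.
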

\begin{proof}
Choose $a \in \F_q^{\times}$ such that 
\[
\left(\frac{a}{q}\right)=-\left(\frac{-1}{\sqrt{q}}\right)
 \qquad (\textrm{resp.}\ \left(\frac{a}{q}\right)=\left(\frac{-1}{\sqrt{q}}\right)).
\]
We consider the quadratic space $(\F_q,Q_a)$, 
where $Q_a \colon \F_q \to \F_p,\ x \mapsto \Tr_{q/p}(a^{-1} x^2)$. 
Choose a maximal totally isotropic subspace $V_a \subset \F_q$
with respect to $Q_a$. 
By Lemma~\ref{split}, the quadratic space
$(\F_q,Q_a)$ is split (resp.\ non-split).
Hence its Witt index is
$n/2$ (resp.\ $(n/2)-1$). 
Hence we can take an $\F_p$-vector subspace $W \subset V_a$ of dimension $e$. 
From Theorem \ref{t1}(i), 
it follows that the curve $\overline{D}_{(W,a)}$ is 
$\F_q$-maximal (resp.\ $\F_q$-minimal). This completes the proof. 
\end{proof}

\subsection{Maximal curves are quotients of the Hermitian curve}\label{appB}
Let $p$ be a power of a prime number $p_0$, and let $q$ be a power of $p$. Assume that $n=[\F_q:\F_p]$ is even. Let 
\[
R(x)=\sum_{i=0}^ea_ix^{p^i}\in \F_q[x]
\]
be an $\F_p$-linearized polynomial with $a_e\neq0$ such that the curve $\overline{C}_R$ is $\F_q$-maximal. 
It was proved in \cite[Theorem 6.12]{CO0} that such curves are quotients of the smooth affine curve 
\[
H\colon y^{p^{n/2}}+y=x^{p^{n/2}+1}, 
\]
whose compactification is isomorphic to the Hermitian curve. 

The Hermitian curve is known to be $\F_q$-maximal. Therefore, every quotient of the Hermitian curve is also $\F_q$-maximal. The above theorem \cite[Theorem 6.12]{CO0} provides an interesting converse for van der Geer--van der Vlugt curves.

In this subsection, we give an alternative proof of the following more precise result using Theorem~\ref{exist12}(i). 
\begin{theorem}\label{theorem: quotient of Hermitian curve}
Let 
\[
R(x)=\sum_{i=0}^ea_ix^{p^i}\in \F_q[x]
\]
be an $\F_p$-linearized polynomial with $a_e\neq0$. Assume that $\overline{C}_R$ is $\F_q$-maximal. Then there exists a finite \'etale surjective morphism 
\[
H\to C_R. 
\] 
\end{theorem}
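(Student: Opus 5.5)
Starting from $C_R$, I will use Theorem~\ref{exist12}(i) repeatedly to climb up to a maximal curve of degree $p^{n/2}$, and then identify that top curve with a quotient of $H$. Since $\overline{C}_R$ is $\F_q$-maximal, Remark~\ref{remark: e>n does not occur} gives $e\le n/2$. While $e<n/2$, Theorem~\ref{exist12}(i) produces $R_1$ of degree $p^{e+1}$ with $\overline{C}_{R_1}$ maximal and a finite étale morphism $C_{R_1}\to C_R$. If $p$ is even and $e=0$, that theorem does not apply directly. In that case $R=a_0x$, and I would handle it separately by exhibiting a maximal $R_a(x)=x^p+ax$, as in Lemma~\ref{key}(ii), covering $C_R$; one checks $xR_a(x)\sim h(x)\cdot a_0 h(x)$ for a suitable linear $h$ via Lemma~\ref{abc}. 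Iterating gives a maximal $C_{R'}$ with $\deg R'=p^{n/2}$ and a composite finite étale surjection $C_{R'}\to C_R$. Composites of finite étale surjections are again finite étale surjections, so it suffices to treat $e=n/2$.

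For the top case with $p$ odd, I take $R'=R_\rho$ with $\rho=(W,a)$ from Theorem~\ref{t1}(ii), where $W$ is maximal totally isotropic for $\Tr_{q/p}(a^{-1}x^2)$ and $\dim W=n/2$. The goal is to show that $D_\rho$ is a quotient of $H$ by matching it with a standard model. By Corollary~\ref{maxc}, the pair $\rho_0=(\eta\F_{\sqrt q},\xi^{-1}\eta^2)$ is also maximal with $\dim=n/2$. Since all split quadratic spaces $(\F_q,Q_a)$ of the same discriminant are isometric (Proposition~\ref{split}), I expect $D_\rho\cong D_{\rho_0}$ over $\F_q$, up to scaling $x$. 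A cleaner route is to compute directly. For $W=\eta\F_{\sqrt q}$, one has $f_W(x)=\eta^{\sqrt q}((x/\eta)^{\sqrt q}-x/\eta)$, so $F_W$ is explicit, essentially $c\,x^{\sqrt q}-c'x$. Then $xR_\rho(x)\sim 2^{-1}aF_W(x)^2$ by Lemma~\ref{lem: Fast aF=R+Rast}(ii). Next I would map $H\to D_{\rho_0}$ by $(x,y)\mapsto(\lambda x, \mathrm{Tr}_{\sqrt q/p}\text{-type polynomial in }y)$. Concretely, $y\mapsto \sum_{i}y^{p^i}$ composed with the Artin--Schreier relation turns $y^{\sqrt q}+y=x^{\sqrt q+1}$ into $z^p-z=\gamma x^{\sqrt q+1}+\dots$. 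Finally I would verify that $\gamma x^{\sqrt q+1}\sim 2^{-1}a F_{W}(\lambda x)^2$ using Lemma~\ref{lem: basic property of sim}(ii), i.e.\ $x\cdot x^{\sqrt q}\sim$ a symmetric expression.

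The main obstacle is this last identification. The curve $H$ has the form $y^{\sqrt q}+y=\cdots$ rather than $y^p-y=\cdots$, so I must first pass to an Artin--Schreier quotient $z^p-z=\beta x^{\sqrt q+1}$ of $H$, which is finite étale. This requires choosing $\beta$ with the correct trace condition, e.g.\ $z=\mu y+\dots$ with $\mu^{\sqrt q}=-\mu$-type twists. Only then can I match it with $D_{\rho_0}$ through Lemma~\ref{abc}, where $h$ is the linear map $x\mapsto\lambda x$. The remaining step is reducing a general top-degree maximal $\rho$ to $\rho_0$. I would try to realize the isometry by multiplication by an element of $\F_q^\times$, or else avoid it by choosing, during the ascent via Theorem~\ref{exist12}, the successive elements $aF_W(u)$ so that the final $W'$ is of the form $\eta\F_{\sqrt q}$. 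For $p$ even, the same plan uses Theorem~\ref{recipe for ex curve intro} and the maximal pair from Lemma~\ref{key}(ii), with $\psi_q$ and Witt-vector equivalences in place of $\sim$.
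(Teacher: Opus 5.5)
The gap is in the top-degree case $e=n/2$. Your reduction to that case (Remark~\ref{remark: e>n does not occur} and iterating Theorem~\ref{exist12}(i)) is the paper's. Your plan for the model pair $\rho_0=(\eta\F_{\sqrt q},\xi^{-1}\eta^2)$, computing $F_{W_0}$ explicitly and passing to an Artin--Schreier quotient of $H$, is sound.

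The missing step is the one you flag yourself. Nothing in the proposal shows that an \emph{arbitrary} maximal $C_{R'}$ with $\deg R'=p^{n/2}$ is isomorphic to $D_{\rho_0}$, and neither of your suggested mechanisms can do it.

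An isometry $(\F_q,Q_a)\to(\F_q,Q_{a_0})$ sending $W$ to $W_0$ is only $\F_p$-linear, so it induces no morphism of curves: $D_\rho$ is built from $F_W$, i.e.\ from $W$ as a subset of $\F_q$. The isometries that visibly act on the curves are the scalings $x\mapsto\lambda x$, and these cannot move a general $W$ onto some $\eta\F_{\sqrt q}$. When $Q_a$ is split, exactly two subspaces $\lambda\F_{\sqrt q}$ are totally isotropic, namely those with $\Tr_{q/\sqrt q}(a^{-1}\lambda^2)=0$. For $n\ge4$ there are many more maximal totally isotropic subspaces than that.

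Steering the ascent fails as well, because $a$ stays fixed and $W$ only grows. The starting $W$ need not lie in any $Q_a$-isotropic $\eta\F_{\sqrt q}$. For $W=\F_p w$ this would require $\Tr_{q/\sqrt q}(a^{-1}w^2)=0$, which does not follow from $\Tr_{q/p}(a^{-1}w^2)=0$ when $n\ge4$.

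What is missing is a rigidity statement in top degree that makes the choice of $W$ irrelevant. Suppose $\deg R=p^e$ with $e=n/2$ and $\overline C_R$ is maximal.
\begin{itemize}
\item $V_R=\ker(R+R^\ast)$ has $\F_p$-dimension $2e=n$, and it lies in $\F_q$ by Lemma~\ref{lemma: When CR is extremal Fq Fp is even and VR in Fq}. Hence $V_R=\F_q$.
\item The polynomial $(R+R^\ast)^{p^e}$ has degree $p^{2e}=q$ and vanishes on $\F_q$. So it equals $\xi(x^q-x)$ for some $\xi\in\F_q^\times$.
\item Comparing coefficients gives $R=ax^{\sqrt q}$ with $a^{\sqrt q-1}=-1$. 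For $p$ even one also needs a point count to kill the coefficient of $x$. This is Lemma~\ref{app1}.
\item The map $(x,y)\mapsto\bigl(x,\sum_{0\le i<n/2}(-ay)^{p^i}\bigr)$ is then a finite \'etale surjection $H\to C_R$ (Lemma~\ref{app2}). This is the Artin--Schreier quotient you were after.
\end{itemize}
So your expectation that all top-degree maximal curves agree up to scaling $x$ is correct. It comes from this coefficient comparison, not from the classification of quadratic spaces, and your sketch for $p$ even has the same gap.

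Separately, your fix for $p$ even and $e=0$ cannot work as written. For every $\F_p$-linearized $h$ one has $a_0h(x)^2\sim cx^2$. But $xR_a(x)=x^{p+1}+ax^2$ is not $\sim$ anything of degree at most $2$. That case is degenerate anyway, since $\overline C_R$ then has genus $0$.
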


Assume that $\overline{C}_R$ is $\F_q$-maximal.  Then, by Remark~\ref{remark: e>n does not occur}, we have $e\le n/2$. Therefore, by Theorem~\ref{exist12}(i), there exists a finite \'etale surjective morphism 
\[
C_{R_1}\to C_R
\]
where $R_1\in \F_q[x]$ is an $\F_p$-linearized polynomial of degree $p^{n/2}$ and $\overline{C}_{R_1}$ is $\F_q$-maximal. 

Thus it suffices to prove the theorem in the case $e=n/2$.
In this case, Theorem~\ref{theorem: quotient of Hermitian curve}
follows from the following two lemmas. 

\begin{lemma}\label{app1}
Assume that $\overline{C}_R$ is $\F_q$-maximal and that  
$e=n/2$. Then $R(x)=ax^{p^e}$ for some $a\in\F_q^\times$
satisfying $a^{p^e-1}=-1$. 
\end{lemma}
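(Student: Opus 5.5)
The plan is to show that maximality together with $e=n/2$ forces $\ker(R+R^\ast)$ to be all of $\F_q$, and then to read off the shape of $R$ by comparing the resulting identity with $x^q-x$.

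\textbf{Step 1: $\ker E=\F_q$.} Put $E:=R+R^\ast$ and $V_R:=\ker E$. Since $\overline{C}_R$ is $\F_q$-maximal, Lemma~\ref{lemma: When CR is extremal Fq Fp is even and VR in Fq} gives $V_R\subset\F_q$. Write
\[
E(x)=\sum_{i=0}^e a_ix^{p^i}+\sum_{i=0}^e a_i^{p^{-i}}x^{p^{-i}} .
\]
Then $E(x)^{p^e}$ is an honest $\F_p$-linearized polynomial of degree $p^{2e}=q$, with leading coefficient $a_e^{p^e}$. Its coefficient of $x$ is $a_e$, up to the contribution of the $i=0$ terms when $e=0$. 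So $E^{p^e}$ is separable whenever $e\ge1$, and $\#V_R=p^{2e}=q$. The degenerate case $e=0$ is immediate: $R=a_0x$, and the required condition $a_0^{p^0-1}=-1$ must be interpreted appropriately or checked separately. Combined with $V_R\subset\F_q$, this gives $V_R=\F_q$.

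\textbf{Step 2: the identity.} Since $E(x)^{p^e}$ has degree $q$ and vanishes on all of $\F_q$, it is a scalar multiple of $x^q-x$:
\[
E(x)^{p^e}=c\,(x^q-x).
\]
The monomials in $E(x)^{p^e}$ are $x^{p^{e+i}}$ and $x^{p^{e-i}}$ for $0\le i\le e$. Comparing coefficients yields three facts:
\begin{itemize}
\item[(a)] $a_i=0$ for $0<i<e$, since $x^{p^{e\pm i}}$ does not occur on the right;
\item[(b)] the coefficient of $x^{p^e}$, namely $2a_0^{p^e}$, vanishes;
\item[(c)] $a_e^{p^e}=c$ and $a_e=-c$, so $a:=a_e$ satisfies $a^{p^e}=-a$, i.e.\ $a^{p^e-1}=-1$.
\end{itemize}

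\textbf{Step 3: odd characteristic.} When $p_0$ is odd, fact (b) gives $a_0=0$, so $R=ax^{p^e}$ and we are done.

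\textbf{Step 4: characteristic two (main obstacle).} When $p_0=2$, fact (b) is vacuous, so we only know $R=ax^{p^e}+a_0x$ with $a^{p^e-1}=-1$; the task is to force $a_0=0$. I would use point counting, as in Lemma~\ref{lemma: existence of u}.

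\begin{itemize}
\item First, for $x\in\F_q$ we have $\sqrt q=p^e$, and
\[
(ax^{\sqrt q+1})^{\sqrt q}=a^{\sqrt q}x^{\sqrt q+1}=-ax^{\sqrt q+1}.
\]
Hence $\Tr_{q/\sqrt q}(ax^{\sqrt q+1})=0$, and so $\Tr_{q/p}(ax^{\sqrt q+1})=0$.
\item Consequently $\Tr_{q/p}(xR(x))=\Tr_{q/p}(a_0x^2)$. Since $x\mapsto x^2$ is a bijection of $\F_q$, this equals the $\F_p$-linear functional $\Tr_{q/p}(a_0^{1/2}\,\cdot\,)$ evaluated at $x$.
\item If $a_0\neq0$, this functional is nonzero, so its kernel has $q/p$ elements. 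Then $\#C_R(\F_q)=p\cdot q/p=q$.
\item Maximality instead requires $\#C_R(\F_q)=q+(p-1)p^e\sqrt q>q$.
\end{itemize}

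This contradiction gives $a_0=0$, hence $R=ax^{p^e}$ with $a^{p^e-1}=-1$. The delicate points are the separability and kernel count in Step~1 and this characteristic-two linear-term argument; everything else is coefficient comparison.
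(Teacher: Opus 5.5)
Your proof is correct and follows the paper's argument essentially step for step. You show $\ker (R+R^\ast)^{p^e}=\F_q$ and compare with $\xi(x^q-x)$ to get $a_1=\dots=a_{e-1}=0$, $2a_0=0$ and $a_e^{p^e}=-a_e$; in characteristic two you then use the point count together with $\Tr_{q/p}(a_ex^{p^e+1})=0$ to force $a_0=0$.

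Two cosmetic remarks. The case $e=0$ never arises, because $n=2e$ is even and positive, so $e\ge 1$; had it arisen, the conclusion would actually fail in odd characteristic, so it is not ``immediate.'' Also, when $p$ is a proper power of $2$ one has $\Tr_{q/p}(a_0x^2)=\Tr_{q/p}(a_0^{1/2}x)^2$ rather than equality with $\Tr_{q/p}(a_0^{1/2}x)$, but the two zero sets coincide, which is all your count needs.
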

\begin{proof}

Define 
\[
E_R:=(R+R^\ast)^{p^e},\qquad V_R:=\ker E_R. 
\]
Since $\overline{C}_R$ is $\F_q$-extremal, we have 
$V_R \subset \F_q$ by Lemma~\ref{lemma: When CR is extremal Fq Fp is even and VR in Fq}. 
Moreover,
\[
\dim_{\F_p}V_R=2e=n=\dim_{\F_p}\F_q.
\]
Hence $V_R=\F_q$.

We have 
\[
E_R(x)=\sum_{i=0}^{n/2} a_i^{p^e}x^{p^{i+e}}+\sum_{i=0}^{n/2}
a_i^{p^{e-i}}x^{p^{e-i}}. 
\]
 Since $\F_q\subset V_R$, the polynomial $E_R(x)$ is divisible by $x^q-x$. Since $\deg E_R=q$, it follows that
\[
E_R(x)=\xi(x^q-x)
\]
for some $\xi\in\F_q^\times$.
 Comparing coefficients, we obtain 
\[
2a_0=0, \qquad 
a_1=\cdots=a_{e-1}=0, \qquad 
a_{e}^{p^e}=\xi=-a_e. 
\]
This shows that $R(x)=a_e x^{p^e}+a_0 x$ with $a_e^{p^e-1}=-1$. When $p$ is odd, we further have $a_0=0$, and the claim follows.  

Assume now that $p$ is even. In this case, $a_e^{p^e-1}=-1=1$, and hence $a_e\in\F_{p^e}$. 
Since $\overline{C}_R$ is $\F_q$-maximal, we have 
\[
\# C_R(\F_q)=q+p^e(p-1)\sqrt{q}. 
\]
Since $e=n/2$, we obtain 
\[
\# C_R(\F_q)=pq. 
\]
Thus
\[
\#\left\{x\in \F_q\mid\Tr_{q/p}(xR(x))=0\right\}=q, 
\]
and hence, 
\[
\Tr_{q/p}(xR(x))=0
\qquad\text{for all }x\in\F_q.
\]
Since $a_e x^{p^e+1} \in \F_{p^e}$ and $[\F_q:\F_{p^e}]=2$, we have 
$\Tr_{q/p}(a_ex^{p^e+1})=0$. Therefore, 
\[\Tr_{q/p}(a_0 x^2)=\Tr_{q/p}(x R(x))=0. \]
Since $x\mapsto x^2$ is bijective on $\F_q$, it follows that $\Tr_{q/p}(a_0 x)=0$ for all $x\in \F_q$. 
Since the pairing
\[
\F_q\times\F_q\to\F_p,\qquad (x,y)\mapsto\Tr_{q/p}(xy)
\]
is nondegenerate, we obtain $a_0=0$.  Hence the claim follows. 
\end{proof}
\begin{lemma}\label{app2}
Assume that $R(x)=ax^{p^e}$ where $a\in \F_q^\times$ satisfies $a^{p^e-1}=-1$. Moreover, assume that $e=n/2$. Then there exists a finite \'etale surjective morphism 
\[
H\to C_R. 
\]
\end{lemma}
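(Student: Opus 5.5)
We will construct the morphism explicitly and then verify it with Lemma~\ref{abc}. Set $Q:=p^e=\sqrt{q}$. The Hermitian affine curve is $H\colon y^{Q}+y=x^{Q+1}$, and the target is $C_R\colon y^p-y=ax^{Q+1}$ with $a^{Q-1}=-1$.

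The plan is to factor through the $\F_p$-subcovers of $H$. Consider the $\F_p$-linear map $y\mapsto \Tr_{Q/p}$-type polynomial. More precisely, we will find a linearized polynomial $g(y)$ and a constant $\lambda\in\F_q^\times$ with
\[
g(y)^p-g(y)=\lambda\,(y^{Q}+y).
\]
Then on $H$ we would have $g(y)^p-g(y)=\lambda x^{Q+1}$. We would then rescale $x$ by $c$ with $c^{Q+1}=\lambda/a$, if that is possible, or choose $\lambda$ accordingly, so that $(x,y)\mapsto (x/c,\,g(y))$ maps $H$ to $C_R$.

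To find $g$, take the $\F_p$-linearized polynomial $T(y)=\sum_{i=0}^{e-1}(\mu y)^{p^i}$. Then $T(y)^p-T(y)=(\mu y)^{Q}-\mu y$. We want this to be proportional to $y^{Q}+y$, which requires $\mu^{Q}/(-\mu)$ to equal $1$ after scaling, i.e.\ $\mu^{Q-1}=-1$. Then $(\mu y)^Q-\mu y=\mu^Q(y^Q+y)$, with $\lambda=\mu^{Q}=-\mu$. Since $a^{Q-1}=-1$, we may simply take $\mu=a$, which gives $\lambda=-a$. The map is then
\[
(x,y)\mapsto (x/c,\;T(y)), \qquad c^{Q+1}=-1 .
\]
Such a $c$ exists in $\F_q$: we need $-1\in(\F_q^\times)^{Q+1}$. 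For $p$ odd, $(-1)^{(q-1)/(Q+1)}=(-1)^{Q-1}=1$. For $p$ even, $-1=1$. This gives an $\F_q$-morphism $H\to C_R$, and the check $T(y)^p-T(y)=-a(y^Q+y)=-a x^{Q+1}=a(x/c)^{Q+1}$ is direct.

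It remains to show the morphism is finite, \'etale and surjective. Factor it as the automorphism $x\mapsto x/c$ followed by $(x,y)\mapsto (x,T(y))$. The latter is a base change along $\A^1_x$ of the additive map $T\colon \A^1_y\to\A^1_y$, restricted to the curves. Since $T$ is separable with derivative $a\neq 0$, $T$ is finite étale. The induced map $H\to C_R$ over $\A^1_x$ is then a morphism of finite étale covers of $\A^1_x$ of degrees $Q$ and $p$, so it is finite étale. Surjectivity follows because $C_R$ is irreducible and the image is closed of dimension $1$. Equivalently, one can phrase this via Lemma~\ref{abc}-style reasoning on the $y$-coordinate. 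I expect the only delicate step to be this last verification, together with the existence of $c$ in $\F_q$ in characteristic $2$; if $c\in\F_q$ caused trouble, I would instead absorb the sign into the choice of $\mu$, using another root of $\mu^{Q-1}=-1$ such as $\mu=-a$ when $p$ is odd.
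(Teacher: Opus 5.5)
Your proof is correct and is essentially the paper's argument. The paper uses the explicit map $(x,y)\mapsto\bigl(x,\sum_{i}(-ay)^{p^i}\bigr)$, which is exactly your fallback choice $\mu=-a$ (no rescaling of $x$ needed); your choice $\mu=a$ with $c^{p^e+1}=-1$ is an equally valid variant, and your base-change argument supplies the finite, \'etale and surjective checks that the paper leaves to ``direct computation.'' Keep the sum over $0\le i\le e-1$ as you wrote it: the upper limit $n-1$ printed in the paper appears to be a typo, since that sum $S$ satisfies $S^p-S=-a(y^q-y)=-a\,x^{q+p^e}+a\,x^{p^e+1}$ on $H$, which is not $xR(x)$.
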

\begin{proof}
A direct computation shows that the morphism
\[
H \longrightarrow C_R,
\qquad
(x,y)\longmapsto
\left(
x,\,
\sum_{i=0}^{n-1}(-ay)^{p^i}
\right)
\]
is finite, \'etale, and surjective.
\end{proof}

This completes the proof of Theorem~\ref{theorem: quotient of Hermitian curve}.

\subsection*{Acknowledgement} 
T. I. is supported by JSPS KAKENHI Grant Numbers 23K20786, 24K21512 and 25K00905.\\
D. T. is supported by JSPS KAKENHI Grant Number 25KJ0122.\\
T. T. is supported by JSPS KAKENHI Grant Numbers 25K06959 and 23K20786.

\end{document}